\documentclass[11pt]{amsart}

\usepackage[top=1in, bottom=1in, left=1.3in, right=1.3in]{geometry}
\usepackage{changes}


\usepackage{amsmath}
\usepackage{amssymb}
\usepackage{amsthm}
\usepackage{verbatim}
\usepackage{graphicx}
\usepackage{graphics}
\usepackage{color}
\usepackage{enumerate}
\usepackage{mathrsfs}
\usepackage{booktabs}
\usepackage[toc,page]{appendix}
\usepackage{enumerate}
\usepackage{mathrsfs}
\usepackage{fancyhdr}
\usepackage{amsopn, amstext, amscd,pifont}
\usepackage{amssymb,bm, cite}
\usepackage[all]{xy}
\usepackage{color}
\usepackage{graphicx,hyperref,float}
\usepackage{float}
\usepackage{listings}
\usepackage{setspace}
\usepackage{url}
\usepackage{paralist}
\usepackage{subcaption}
\usepackage{stmaryrd}
\usepackage{tikz-cd}
\usepackage{blkarray, bigstrut}
\usepackage{xparse}

\usepackage{graphicx,pgf}
\usepackage{float}
\usepackage{enumitem}

\usepackage{fancyhdr}

\numberwithin{equation}{section}

\theoremstyle{plain}

\newtheorem{theorem}{Theorem}[section]
\newtheorem{proposition}[theorem]{Proposition}
\newtheorem{lemma}[theorem]{Lemma}
\newtheorem{corollary}[theorem]{Corollary}

\theoremstyle{definition}
\newtheorem{definition}[theorem]{Definition}
\newtheorem{example}[theorem]{Example}
\newtheorem{remark}[theorem]{Remark}

\graphicspath{{images/}}


%

\renewcommand{\a}{\alpha}

\renewcommand{\d}{\delta}

\newcommand{\g}{\gamma}

\renewcommand{\l}{\lambda}

\renewcommand{\r}{\rho}
\newcommand{\s}{\sigma}

\renewcommand{\th}{\theta}


\newcommand{\gF}{{\mathfrak{F}}}

\newcommand{\Acal}{{\mathcal A}}

\newcommand{\Ccal}{{\mathcal C}}

\newcommand{\Jcal}{{\mathcal J}}

\newcommand{\Ocal}{{\mathcal O}}

\newcommand{\Scal}{{\mathcal S}}
\newcommand{\Tcal}{{\mathcal T}}
\newcommand{\Ucal}{{\mathcal U}}



\newcommand{\ZZ}{{\mathbb{Z}}}

\newcommand{\PP}{{\mathbb{P}}}

\newcommand{\NN}{{\mathbb{N}}}

\newcommand{\Sing}{\operatorname{\bf Sing}}

\newcommand{\GO}{\operatorname{GO}}

\newcommand{\trace}{\operatorname{trace}}

\newcommand{\diam}{\operatorname{diam}}

\newcommand{\Wbar}{\overline{W}}

\newcommand{\Kbar}{\overline{K}}

\newcommand{\Dbar}{\overline{D}}




%
  {\end{list}}

\setcounter{tocdepth}{2}

\title[Zeta function and entropy]{Zeta function and entropy for non-archimedean subhyperbolic dynamics}
\author{Liang-Chung Hsia}
\address{Department of Mathematics, National Taiwan Normal University, Taipei, Taiwan, ROC}
\email{hsia@math.ntnu.edu.tw}

\author{Hongming Nie}
\address{Los Angeles, CA 90089, USA}
\email{hongming.i.nie@gmail.com}

\author{Chenxi Wu}
\address{Department of Mathematics, University of Wisconsin-Madison, 480 Lincoln Drive, Madison, WI 53706, USA}
\email{cwu367@math.wisc.edu}

\begin{document}

\begin{abstract}

Let $K$ be a complete non-archimedean field of characteristic $0$ 
equipped with a discrete valuation. We establish the rationality of  the Artin-Mazur zeta function on the Julia set for any subhyperbolic rational map defined over $K$ with a compact Julia set. Furthermore, we conclude that the topological entropy on the Julia set of such a map is given by the logarithm of a weak Perron number. Conversely, we construct a (sub)hyperbolic rational map defined over $K$ with  compact Julia set whose  topological entropy on the Julia set equals  the  logarithm of a given weak Perron number. This extends Thurston's work on the entropy  for postcritically finite interval self-maps 
to the non-archimedean setting.
\end{abstract}
\maketitle

\tableofcontents

\section{Introduction}\label{sec:intro}

The  Artin-Mazur zeta function and topological entropy are deeply intertwined in the study of dynamical systems. This paper aims to investigate these two fundamental objects within the framework of non-archimedean dynamics. Specifically, we  focus on systems generated by the iterative action of certain rational maps defined over non-archimedean fields.  

The  Artin-Mazur zeta function, introduced by Artin and Mazur \cite{Artin65}, encodes information about the cardinality of periodic points. Its radius of convergence provides a meaningful reflection of the  topological entropy for the system. In the setting of differential dynamics, the (ir)rationality of the Artin-Mazur zeta function has been extensively studied. Smale conjectured, and Manning later proved, that the Artin-Mazur zeta function is rational when the dynamics satisfies the hyperbolicity condition known as Axiom A \cite{Manning71}.  In the context of  complex dynamics on the Riemann sphere, Hinkkanen \cite{Hinkkanen94} established the rationality of the Artin-Mazur zeta function for rational maps.  In real dynamics, Milnor and Thurston \cite{Milnor88} studied the Artin-Mazur zeta function for interval maps, see also \cite{Olivares24}. Meanwhile, the topological entropy has been well understood for complex rational maps \cite{Gromov03} and thoroughly examined in the context of real dynamics \cite{Milnor88}. Notably, Thurston \cite{Thurston22} provided a complete description of the entropy for postcritically finite interval maps.

Non-archimedean dynamics arises from various contexts, including number theory, degenerations of complex or real dynamical systems, physics, information theory, and even cognitive science and psychology. In the setting of non-archimedean dynamics, when the ground field is algebraically closed, the study of the Artin-Mazur zeta function and the topological entropy  has already appeared in \cite{Bajpa17, Bridy16, Favre10, Favre22, Lee15}.  In this paper, we focus on univariate rational maps of degree at least $2$ defined over a non-archimedean field.  Typically, in our case the ground  field does not need to be algebraically closed.

Throughout this paper, we fix a field $K$ which is complete with respect to a nontrivial 
non-archimedean absolute value $|\cdot|_K$. Let $\phi(z)\in K(z)$ be a rational map of degree $d\ge 2$. Then $\phi$ induces a self-map on the projective line $\mathbb{P}^{1}(K)$. The {\em $K$-Fatou set} of $\phi$, denoted by $\mathcal{F}(\phi,K)$, is the maximal domain of equicontinuity of $\{\phi^{\circ n}\}_{n\ge 1}$ on $\mathbb{P}^1(K)$. The {\em $K$-Julia set} is then defined by $\mathcal{J}(\phi,K)=\mathbb{P}^1(K)\backslash\mathcal{F}(\phi,K)$. Here $\phi^{\circ n}$ denotes the $n$-times composition of $\phi$. Unlike in complex dynamics,  the Julia set $\mathcal{J}(\phi,K)$ could be empty; for instance, if $\phi$ has good reduction, then $\phi$ has no Julia set for any extension of $K$, see \cite[Theorem 2.17]{Silverman07}. 
Moreover, by definition, $\mathcal{J}(\phi,K)$ is forward invariant under $\phi$, that is, $\phi(\mathcal{J}(\phi,K))\subseteq \mathcal{J}(\phi,K)$,  and hence $\phi$ induces a dynamical system on $\mathcal{J}(\phi,K)$.

Note that any periodic point in $\mathcal{J}(\phi,K)$ must necessarily be a repelling periodic point for $\phi$. In our analysis, we focus on the Artin-Mazur zeta function $\zeta_\phi(K, t)$ associated with the dynamical system $(\mathcal{J}(\phi,K), \phi)$. That is, 
$$\zeta_\phi(K, t):=\exp\left(\sum_{m=1}^\infty\, \#\mathrm{Fix}^*(K, \phi^{\circ m})\cdot\frac{t^m}{m}\right),$$
where $\mathrm{Fix}^*(K, \phi^{\circ m})$ represents the set of repelling fixed points of $\phi^{\circ m}$. It is a formal power series  in $\mathbb{Z}[[t]]$, see \S\ref{sec:AM}.

Our first main result concerns with identifying the conditions under which the Artin-Mazur zeta function 
$\zeta_\phi(K, t)$ is a rational function in $t$. 
To specify these conditions, let us denote the set of  $K$-rational critical points of $\phi$ by $\mathrm{Crit}(\phi,K)$, that is, $\mathrm{Crit}(\phi,K)$ consists of the points $c\in\mathbb{P}^1(K)$ for which $\phi$ is not bijective on any (sufficiently) small neighborhood of $c$.
Inspired by \cite{Benedetto01} and \cite{Fan21}, we classify the map $\phi$ as {\em hyperbolic} if $\mathrm{Crit}(\phi,K)\cap \mathcal{J}(\phi,K) = \emptyset$, and we call $\phi$  \emph{subhyperbolic} if all points in $\mathcal{J}(\phi,K)\cap\mathrm{Crit}(\phi,K)$ are eventually periodic under $\phi$. By definitions, any hyperbolic rational map is subhyperbolic.

\begin{theorem}\label{ratzeta}
Let $K$ be a complete non-archimedean field of characteristic $0$, 
and let $\phi\in K(z)$  be a subhyperbolic rational map of degree at least $2$ with nonempty and compact $\mathcal{J}(\phi,K)$.  
Then  the Artin-Mazur zeta function $\zeta_\phi(K, t)$ is the quotient of two coprime polynomials  in $\mathbb{Z}[t]$ with the numerator a product of cyclotomic polynomials. 
\end{theorem}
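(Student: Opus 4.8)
The plan is to reduce the computation of $\zeta_\phi(K,t)$ to a finite combinatorial object by showing that, up to finitely many exceptional periodic points, the dynamics on $\mathcal{J}(\phi,K)$ is conjugate to a subshift of finite type, whose zeta function is rational by the Bowen--Lanford formula. First I would recall the structure of the Julia set in the non-archimedean setting: since $K$ carries a discrete valuation and $\mathcal{J}(\phi,K)$ is compact (hence a closed subset of the totally disconnected, locally compact space $\mathbb{P}^1(K)$), it is a compact totally disconnected metrizable space, so it admits a basis of clopen sets. The hyperbolicity/subhyperbolicity hypothesis should furnish an expansion estimate: away from a neighborhood of the finitely many critical orbit points in $\mathcal{J}(\phi,K)$, $\phi$ is locally injective and in fact locally expanding in the non-archimedean metric (on a small enough clopen ball $\phi$ acts as an affine-type map scaling the radius by a fixed factor $>1$). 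This is where I expect to lean on the analysis developed earlier in the paper via reduction and the structure of $\phi$ near repelling cycles.

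Next I would build a Markov partition. Cover $\mathcal{J}(\phi,K)$ by finitely many clopen sets $U_1,\dots,U_N$ on which $\phi$ is injective, chosen fine enough that each $\phi(U_i)$ is a union of some of the $U_j$'s (the non-archimedean setting is favorable here: clopen balls map to clopen balls, so the "Markov" compatibility $\phi(U_i)\supseteq U_j$ or $\phi(U_i)\cap U_j=\emptyset$ can be arranged exactly, not just approximately, unlike in the real case). The subhyperbolic case requires care at the finitely many critical points in $\mathcal{J}(\phi,K)$ and their forward orbits: around each such point $\phi$ is $e$-to-one for some $e\ge 2$, so I would either incorporate the ramification into the transition data with multiplicities, or — since these orbits are eventually periodic, hence finite — excise small clopen neighborhoods of this finite set, handle the excised periodic points by a direct count, and apply the subshift argument to the remainder. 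Let $A$ be the resulting $N\times N$ transition matrix (with nonnegative integer entries recording the number of injective branches of $\phi$ from $U_i$ onto pieces inside $U_j$).

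Then I would show that $\#\mathrm{Fix}^*(K,\phi^{\circ m})$ agrees with $\operatorname{Tr}(A^m)$ up to a correction coming from the finitely many excised orbit points, possibly for all $m\ge 1$ or at least for all sufficiently large $m$. The expansion estimate guarantees that a point fixed by $\phi^{\circ m}$ is pinned down uniquely by its itinerary through the partition (two points with the same length-$m$ itinerary lie in a ball that $\phi^{\circ m}$ expands, forcing them to coincide), so periodic points of period dividing $m$ in the non-excised part biject with admissible periodic itineraries of length $m$, i.e. with fixed points of $A^m$ acting on sequence space; every such point is automatically repelling. Consequently
\[
\sum_{m\ge 1}\#\mathrm{Fix}^*(K,\phi^{\circ m})\frac{t^m}{m}
=\sum_{m\ge 1}\operatorname{Tr}(A^m)\frac{t^m}{m}+P(t),
\]
where $P(t)$ is a polynomial (or an explicitly rational correction) accounting for the finitely many excised periodic points and any finitely many low-period discrepancies. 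Exponentiating and using $\exp\!\big(\sum_m \operatorname{Tr}(A^m)t^m/m\big)=\det(I-tA)^{-1}$ yields
\[
\zeta_\phi(K,t)=\frac{e^{P(t)}}{\det(I-tA)},
\]
which is a rational function with integer coefficients after clearing the (integer-polynomial) numerator and denominator; writing it in lowest terms gives the claimed quotient of two coprime polynomials in $\mathbb{Z}[t]$.

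The main obstacle, I expect, is constructing the Markov partition cleanly and verifying the exact count $\#\mathrm{Fix}^*(K,\phi^{\circ m})=\operatorname{Tr}(A^m)+(\text{correction})$ in the \emph{subhyperbolic} (as opposed to merely hyperbolic) case: the presence of critical points in $\mathcal{J}(\phi,K)$ destroys local injectivity precisely where the partition boundaries want to be, so one must argue that since the critical orbits are finite and eventually periodic, a partition can be refined to isolate them, and the bookkeeping of branch multiplicities near ramified periodic points does not spoil the rationality. Controlling these finitely many orbits — and checking that the resulting discrepancy between periodic-point counts and $\operatorname{Tr}(A^m)$ is eventually given by a fixed rational function of $t$ — is the technical heart of the argument.
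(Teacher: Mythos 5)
Your outline of the hyperbolic part (finite cover by clopen scaling disks, exact Markov refinement in the non-archimedean metric, itineraries pinning down periodic points, $\det(I-tA)^{-1}$) matches the paper's strategy. The genuine gap is in your treatment of the critical points lying in $\mathcal{J}(\phi,K)$, which is exactly where the paper has to work hardest. Your proposed fix --- excise small clopen neighborhoods of the (finite, eventually periodic) critical orbit and ``handle the excised periodic points by a direct count'' --- does not work: periodic points are dense in $\mathcal{J}(\phi,K)$, so any clopen neighborhood of a critical point $c\in\mathcal{J}(\phi,K)$ meets infinitely many periodic orbits, and the complement of the excised set is not forward invariant, so orbits passing through the excised region are neither counted by the subshift nor by a finite correction. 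The alternative you mention (recording ramification as multiplicities in $A$) also fails, because the obstruction is not a counting issue: no clopen neighborhood of $c$ is a scaling disk, so covering $\mathcal{J}(\phi,K)$ near $c$ by sets on which $\phi$ acts as a similarity forces \emph{countably} many disks (annular shells accumulating at $c$), and a finite Markov partition in your sense simply does not exist.

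The paper's resolution is the missing idea: it first builds a \emph{countable}-state Markov partition (Proposition \ref{prop:countable}, via the splitting procedure of Lemma \ref{lem:split}), and then collapses it to a finite-state shift $(\Sigma_B,\sigma)$ using a self-similarity of the Julia set near each critical orbit --- the ``homogeneous condition'' of Definition \ref{def:homogeneous}, i.e.\ the existence of local scaling maps $T_x$ with $f^{\circ\ell_x}\circ T_x=f^{\circ(\varphi(\gamma_x)+\ell_x)}$ that identify the countably many shells around $x$ into finitely many $T$-equivalence classes. That this condition actually holds for subhyperbolic rational maps is itself a nontrivial step, proved in Lemma \ref{lem:rational-homo} by an explicit power-series construction of $T_c$ near each critical point. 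The factor map $\pi:\Sigma_A\to\Sigma_B$ is injective and misses only the grand orbits of finitely many periodic cycles of $\sigma$, which produce the correction factor $\prod_{m}(1-t^{\ell_m})$ in the numerator of $\zeta_\phi(K,t)=\prod_m(1-t^{\ell_m})/\det(I-tB)$. Without an argument of this type, your identity $\#\mathrm{Fix}^*(K,\phi^{\circ m})=\operatorname{Tr}(A^m)+(\text{finite correction})$ is unjustified precisely in the subhyperbolic case the theorem is about. (Minor point: you invoke a discrete valuation on $K$, but Theorem \ref{ratzeta} does not assume one; that hypothesis is only needed for Theorem \ref{main}.)
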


If $\mathbb{P}^1(K)$ is compact, then $\mathcal{J}(\phi,K)$ is automatically compact. If $\mathbb{P}^1(K)$ is not compact (for instance, the residue field of $K$ is infinite), then $\mathcal{J}(\phi,K)$ could still be a compact subspace of $\mathbb{P}^{1}(K)$.  We refer to \cite{Hsia96,Kiwi22,Trucco14} for examples of rational maps with compact Julia sets defined over a non-archimedean field with an infinite residue field. 

We deduce Theorem \ref{ratzeta} from a general framework for continuous maps on non-archimedean spaces, as discussed in \S\ref{sec:zeta-ra}. More precisely, we impose  constrains on the singular points of such continuous maps (analogous to critical points of rational maps), and then establish the finiteness of periodic points for a given period, along with the rationality of the Artin-Mazur zeta function. A key  ingredient is to encode the system $(\mathcal{J}(K,\phi),\phi)$ with a symbolic dynamical system $(\Sigma_B,\sigma)$ of finite states, see \S\ref{sec:factoring}. Additionally, we provide a precise formula for the Artin-Mazur zeta function in \S\ref{sec:pr}.  The assumption that $K$ has characteristic zero is  used only in the proof of Lemma \ref{lem:rational-homo}. 

As a consequence of Theorem \ref{ratzeta}, the topological entropy $\mathfrak{h}(\phi,K)$ on the Julia set of a subhyperbolic map $\phi\in K(z)$ of degree at least $2$ with compact $\mathcal{J}(\phi,K)$ is shown to be the logarithm of an algebraic number. Furthermore, we demonstrate that this entropy is, in fact, the logarithm of a weak Perron number. Recall that a real positive algebraic integer $\lambda$ is 
called a \emph{weak Perron number} if the complex absolute values of its Galois conjugates $\lambda'\in\mathbb{R}$ 
satisfy $|\lambda'|\leq \lambda$; 
equivalently, some power of $\lambda$ is a Perron number that is an algebraic integer $\ge 1$ whose remaining Galois conjugates have strictly smaller absolute value (see \cite[Theorem 3]{Lind84} and \cite[Proposition 5.1]{Thurston22}).
Then our result naturally raises the question: Which weak Perron numbers can be realized as the exponential of the topological entropy on the Julia set of a subhyperbolic map over a non-archimedean field?

 Our subsequent result provides a realization of the set of weak Perron numbers as the topological entropy on the Julia sets for subhyperbolic maps  in $K(z)$ with compact Julia sets, provided that $K$ is discretely valued, presenting an analogue to the result \cite[Theorem 1.2]{Thurston22} of Thurston. 

\begin{theorem}\label{main}
Let $K$ be a complete non-archimedean field of characteristic $0$ 
equipped with a discrete valuation. Then the set 
$$\left\{\exp(\mathfrak{h}(\phi,K)): \phi\in K(z)\text{ subhyperbolic with compact } \mathcal{J}(\phi,K)\subseteq \mathbb{P}^1(K)\right\}$$
equals the set of weak Perron numbers.
\end{theorem}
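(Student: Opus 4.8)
The claimed equality of sets consists of two inclusions of rather different character. The inclusion ``$\subseteq$'' is essentially the consequence of Theorem~\ref{ratzeta} recorded after its statement in \S\ref{sec:intro}: if $\phi$ is subhyperbolic with compact $\mathcal{J}(\phi,K)$, then either $\mathcal{J}(\phi,K)=\emptyset$ and $\exp(\mathfrak{h}(\phi,K))=1$ is a weak Perron number, or the encoding of \S\ref{sec:factoring} identifies $(\mathcal{J}(\phi,K),\phi)$ with a finite-state symbolic system $(\Sigma_B,\sigma)$ in an entropy-preserving way, so that $\mathfrak{h}(\phi,K)=\log\rho(B)$ for a nonnegative integer matrix $B$ with spectral radius $\rho(B)$. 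Since $\det(tI-B)\in\mathbb{Z}[t]$ is monic and, by Perron--Frobenius, every eigenvalue of $B$ has absolute value at most $\rho(B)$, the number $\exp(\mathfrak{h}(\phi,K))=\rho(B)$ is a weak Perron number. (Discreteness of the valuation is not needed for this direction.) It remains to prove ``$\supseteq$''.

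Fix a weak Perron number $\lambda$. If $\lambda=1$, take any $\phi\in K(z)$ of degree $\ge 2$ with good reduction, so $\mathcal{J}(\phi,K)=\emptyset$ is (trivially) compact and $\exp(\mathfrak{h}(\phi,K))=1$; so assume $\lambda>1$. By Lind's theorem \cite[Theorem~3]{Lind84} we may write $\lambda=\rho(A_0)$ for a nonnegative integer matrix $A_0$; let $G=(V,E)$ be the finite directed multigraph with adjacency matrix $A_0$, after deleting any vertex of in- or out-degree $0$, and let $(\Sigma_G,\sigma)$ be its edge shift, so that $h_{\mathrm{top}}(\Sigma_G,\sigma)=\log\rho(A_0)=\log\lambda$. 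The feature we exploit is that in this edge shift the set of edges that may follow a given edge $e\in E$ depends only on the head vertex of $e$: writing $\mathcal{C}_v:=\{e'\in E:\operatorname{tail}(e')=v\}$ for $v\in V$, the allowed successors of $e$ are exactly the edges of $\mathcal{C}_{\operatorname{head}(e)}$, and $\{\mathcal{C}_v\}_{v\in V}$ is a \emph{partition} of $E$. This partition structure is what can be matched by a nested family of ultrametric disks, whereas an arbitrary $0$--$1$ transition matrix cannot be realized directly (its rows need not form a laminar family of subsets); this is why the passage to an edge shift is forced.

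The plan is then to construct, for each $e\in E$, a closed disk $D_e\subset\mathbb{A}^1(K)$ and a rational map $\phi\in K(z)$ of sufficiently large degree $d$ such that: (i) for each $v\in V$ the disks $\{D_e:e\in\mathcal{C}_v\}$ lie in a common closed disk $\widehat{C}_v$, the $\widehat{C}_v$ ($v\in V$) being pairwise disjoint --- feasible since the $\mathcal{C}_v$ partition $E$ and, $K$ being discretely valued, disks of the needed radii are available; (ii) for each $e$, $\phi$ is injective on $D_e$, and $\phi(D_e)$ is a closed disk that contains $\widehat{C}_{\operatorname{head}(e)}$, is disjoint from the other $\widehat{C}_v$, and has diameter strictly greater than that of each $D_{e'}$ with $e'\in\mathcal{C}_{\operatorname{head}(e)}$ (so $\phi$ is uniformly expanding on $\bigcup_e D_e$); (iii) $\phi(\infty)=\infty$ with local degree $\ge 2$, and the immediate basin of $\infty$ contains $\mathbb{P}^1(K)\setminus\bigcup_e D_e$ together with all critical points of $\phi$. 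Such a $\phi$ is produced by rational interpolation: fix the centers and radii of the $D_e$ and $\widehat{C}_v$ in $K$, prescribe the values and first derivatives of $\phi$ at the centers of the $D_e$ and at $\infty$, and take the degree large enough to realize these constraints. Granting (i)--(iii), the remaining steps are routine: anything whose forward orbit ever leaves $\bigcup_e D_e$ is attracted to $\infty$, hence lies in the Fatou set, while the expansion in (ii) shows the complementary set is not equicontinuous, so $\mathcal{J}(\phi,K)=\bigcap_{m\ge0}\phi^{-m}(\bigcup_e D_e)$; the map sending $(e_k)_{k\ge0}\in\Sigma_G$ to the unique point of $\bigcap_k\phi^{-k}(D_{e_k})$ is a homeomorphism of $\Sigma_G$ onto $\mathcal{J}(\phi,K)$ conjugating $\sigma$ to $\phi$ (cylinder diameters tend to $0$ by the expansion), so $\mathcal{J}(\phi,K)$ is compact even when the residue field of $K$ is infinite; and $\mathrm{Crit}(\phi,K)\cap\mathcal{J}(\phi,K)=\emptyset$ by (iii) together with the injectivity of $\phi$ on each $D_e$, so $\phi$ is hyperbolic, a fortiori subhyperbolic. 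Hence $\exp(\mathfrak{h}(\phi,K))=\exp(h_{\mathrm{top}}(\Sigma_G,\sigma))=\lambda$, completing ``$\supseteq$''.

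I expect the main obstacle to lie in step (ii)--(iii): one must exhibit a \emph{single} global rational function over $K$ and then verify, via Newton polygon / Weierstrass preparation estimates, that it has exactly the prescribed local behaviour on all the $D_e$ at once --- that $\phi|_{D_e}$ is genuinely injective with the stated image, and that the $d-\#\{e:\operatorname{head}(e)=v\}$ ``extra'' preimages of points near $\widehat{C}_v$, as well as every critical point of $\phi$, fall into and remain in the basin of $\infty$ rather than re-entering some $D_{e'}$. The reduction to an edge shift is precisely the device that arranges the disk combinatorics to be realizable, and the discreteness of the valuation enters only to guarantee that the disk radii we need lie in $|K^\times|$, so that the disks are closed and the containment relations among them are exactly as prescribed.
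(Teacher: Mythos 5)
Your forward inclusion is correct and is essentially the paper's argument (Theorem \ref{ratzeta} plus the finite-state encoding of \S\ref{sec:factoring}); your parenthetical that discreteness is not needed there is also right. For the converse, your combinatorial reduction is a legitimate variant of the paper's: where you pass to the edge shift of a multigraph realizing $\lambda$ as a spectral radius, so that the row supports of the transition matrix form a partition $\{\mathcal{C}_v\}$, the paper builds an \emph{admissible} $(0,1)$-matrix via augmented (edge-subdivided) graphs (Proposition \ref{prop:perron-matrix}), whose ``containing property'' is exactly the laminarity of row supports that you correctly identify as the obstruction to realizing an arbitrary $0$--$1$ matrix by nested ultrametric disks. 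These two devices are interchangeable, and your diagnosis of \emph{why} the reduction is forced is on target.

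The genuine gap is the step you yourself flag as ``the main obstacle'': producing the global rational map. Prescribing values and first derivatives of $\phi$ at the centers of the $D_e$ and at $\infty$ and ``taking the degree large enough'' does not control $\phi$ on entire disks --- a Hermite interpolant of high degree will generically have enormous coefficients and will not be injective on the $D_e$, will not map the gaps between disks into the basin of $\infty$, and gives no handle on where its $2d-2$ critical points land. The paper fills exactly this hole with the explicit gluing construction of \cite[Theorem 4.2]{Nopal23} (Lemma \ref{thm:surgery}): $F=\sum_i f_i h_i$ with $h_i(z)=\bigl(1-((z-a_i)/b_i)^M\bigr)^{-1}$, followed by the quantitative estimates of Lemma \ref{lem:M} showing that \emph{all} of $\mathbb{P}^1(K)\setminus\bigcup_i\overline{D}_i$ maps into a fixed Fatou disk $\overline{D}_0$, and Lemma \ref{lem:F-J} / Proposition \ref{prop:F} for the degree-one behaviour on each $\overline{D}_i$ (whence hyperbolicity) and the nonemptiness of $\mathcal{J}(F,K)\cap\overline{D}_i$. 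Note also that your closing remark about discreteness is backwards: discreteness of the valuation is a \emph{difficulty} here, not a convenience --- the gluing requires $\mathrm{diam}(B_i)<|b_i|<\rho_i$, and when these bounds are adjacent in the value group the paper must take $b_i=x_i/\sqrt{\mathfrak{u}}$ in a ramified quadratic extension and choose $M$ even so that $F$ still lies in $K(z)$. Without an argument of this kind (or an equally explicit Runge-type approximation), the construction of $\phi$ in your items (ii)--(iii) is asserted rather than proved, and that is the heart of the ``$\supseteq$'' direction.
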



The encoding of the dynamical system $(\mathcal{J}(\phi, K),\phi)$ with $(\Sigma_B,\sigma)$ implies that the entropy 
$\mathfrak{h}(\phi,K)$ can be reformulated as 
$$\mathfrak{h}(\phi,K)=\limsup_{m\rightarrow\infty}\frac{1}{m}\log\#\mathrm{Fix}^*(K, \phi^{\circ m}).$$
The Fatou components in $\mathcal{F}(K,\phi)$ are well-defined and classified in \cite{Benedetto02} though $\mathbb{P}^1(K)$ is totally disconnected.  Since $\phi$ is eventually non-expending on any eventually periodic Fatou component,  such a component carries zero entropy.  Consequently,  if $\mathbb{P}^1(K)$ has no wandering domains, then the entropy $\mathfrak{h}(\phi,K)$ equals the topological entropy $h_{\mathrm{top}}(\phi,K)$ of $\phi$ on $\mathbb{P}^1(K)$. 
In particular, if $\mathbb{P}^1(K)$ contains $2\deg\phi-2$ critical points of $\phi$, counted with multiplicities, then the subhyperbolicity of $\phi$ implies that  $\mathcal{F}(K,\phi)$ has no wandering domain (see, e.g., \cite[Theorem 1.2]{Benedetto00}), and hence $\mathfrak{h}(\phi,K)=h_{\mathrm{top}}(\phi,K)$; we refer to \cite[Chapter 11]{Benedetto19} for a study of wandering domains. Moreover, the Poincar\'e recurrence theorem implies that any invariant probability measure on $\mathbb{P}^1(K)$ cannot charge the wandering domains, and consequently, provided that $\mathbb{P}^1(K)$ is compact, the variational principle yields $\mathfrak{h}(\phi,K)=h_{\mathrm{top}}(\phi,K)$.

 Although our Theorem \ref{main} has a superficial resemblance to Thurston's, our arguments differ from Thurston's in key respects. In Thurston's case, since $\mathbb{R}$ has a linear order, a Markov decomposition can be obtained directly from the forward orbit of the critical points, which induces  that the exponential of any entropy is a weak Perron number. However, non-archimedean fields do not admit such linear orders, which makes the construction of  a desired Markov decomposition  significantly more conceptual and complicated. Once such a Markov decomposition is obtained, as aforementioned, the direction in Theorem \ref{main} that $\exp(\mathfrak{h}(\phi,K))$ is a weak Perron number follows from the symbolic dynamics developed for Theorem \ref{ratzeta}. For the converse direction, given a weak Perron number $\lambda\ge 1$, we construct a natural (0,1)-matrix $A$ with leading eigenvalue $\lambda$ that  exhibits properties reflecting the non-archimedean nature. We then define a piecewise linear map on finitely many disks in  $K$ with $A$ as its  adjacency  matrix. By applying a result from \cite{Nopal23}, we appropriately glue the pieces of this map to obtain a rational map in $K(z)$. After verifying its dynamics, we conclude that this map is hyperbolic and has entropy $\log\lambda$. For more details, we refer to \S\ref{sec:entropy}.

\smallskip
\noindent \textbf{Outline.}
The paper is organized as follows. 
In \S\ref{sec:pre}, we provide  the background material on non-archimedean subhyperbolic maps and the Artin-Mazur zeta function. In \S\ref{sec:zeta-ra}, assuming the compactness of the Julia set, we describe the dynamics on Julia set of a continuous map on a non-archimedean metric space under certain assumptions, named normality and homogeneity, on the singular points. We begin by encoding the Julia dynamics with a symbolic dynamical system of countably many states and then upgrade it to a symbolic dynamical system of finitely many states. In this section, we demonstrate that the Artin-Mazur zeta function for such a continuous map is rational. In \S\ref{sec:rationalmap}, we focus on the case of rational maps. We first prove Theorem \ref{ratzeta} in \S\ref{sec:proof-rational} by verifying that the rational maps satisfy the assumptions in previous section. Then,  in \S\ref{sec:entropy}, we establish Theorem \ref{main}. Finally, in \S\ref{sec:example}, we provide concrete examples for the Artin-Mazur zeta function and
the entropy, using the method developed in this paper.

\smallskip
\noindent\textbf{Acknowledgements.} The work was partially done  during  the second author's visit to the first author in the Department of Mathematics at National Taiwan Normal University (NTNU) and the third author's visit to the second author in the Institute for Mathematical Sciences (IMS) at Stony Brook University. The authors are grateful to the  Department of Mathematics at NTNU and the IMS for their generous hospitality. The third author is supported by Simons Collaboration Grant, Award Number 850685.

\section{Preliminaries}\label{sec:pre}

In this section, we present the necessary background material. In \S\ref{sec:map}, we introduce subhyperbolic continuous maps  on  non-archimedean spaces, extending the concept of subhyperbolic rational maps defined over non-archimedean fields. In \S\ref{sec:AM}, we discuss the Artin-Mazur zeta functions for continuous maps  on  non-archimedean spaces, for finite directed graphs and for subshifts of finite type, respectively.

\subsection{Non-archimedean subhyperbolic maps}\label{sec:map}

Let $X$ be a metric space associated with a metric $\rho$. We say that $X$ is \emph{non-archimedean} if the metric $\rho$ satisfies the strong triangle inequality, that is, $\rho(x, y)\leq \max\{\rho(x, z), \rho(y, z)\}$ for any points $x, y, z\in X$. Given a non-archimedean metric space $(X,\rho)$, a closed disk in $X$ is a set of form 
$$\overline{D}(a,r):=\{x\in X: \rho(x,a)\le r\}$$
 for some $a\in X$ and $r\ge0$; and  an open disk in $X$ is a set of form 
$$D(a,r):=\{x\in X: \rho(x,a)< r\}$$
 for some $a\in X$ and $r>0$. In the following, we call a disk trivial if it is a singleton (i.e. $r=0$). 
 We will use $U$ to denote a disk in $X$ without specifying whether it is open or closed, unless stated otherwise. Under the topology induced by $\rho$, any nontrivial disk in $X$ is a clopen set. Moreover, if two disks intersect, then one contains the other.

 A continuous map $f: X\to X$ induces a non-archimedean dynamical system $(X, f)$. For any $n\ge 1$, write $f^{\circ n}$ for the $n$-times iterate of $f$. For a nonempty subset $Y$ of $X$,  we use the following standard notations for the \emph{forward} and \emph{grand} orbits of $Y$, respectively:  
 \begin{align*}
 \Ocal_{f}(Y) & := \bigcup_{n\ge 0}  f^{\circ n} \left(Y\right),  \\
 \mathrm{GO}(Y)& :=\{x\in X: \exists\ n\ge 0, m\ge 0,\text{and}\  y\in Y\ \text{such that}\ f^{\circ n}(x)=f^{\circ m}(y)\}. 
\end{align*}

A  point $x\in X$ is called a \emph{periodic point of period $n$} if $f^{\circ n}(x)=x$.  In this case, the forward orbit 
$\Ocal_{f}(x) = \{f^{\circ j}(x)\mid j\ge 0\} = \{x,f(x),\dots, f^{\circ (n-1)}(x)\}$ of $x$ is called a \emph{periodic cycle} of length $n$. Moreover, if for any small disk $U$ containing $x$, we have  $U\subsetneq f^{\circ n}(U)$, then $x$ is said to be a \emph{repelling} periodic point and its forward orbit is referred to as a \emph{repelling} periodic cycle. 

For each  $n\ge 1$, let  $\mathrm{Fix}^\ast(f^{\circ n})$ denote the set of all repelling periodic points of period $n$ for $f$.  The \emph{Julia set} $\mathcal{J}(f)$ of $f$ is defined to be the closure of the grand orbit of the repelling periodic points of $f$:  
$$\mathcal{J}(f):=\overline{\mathrm{GO}(\cup_{n\ge 1}\mathrm{Fix}^\ast(f^{\circ n}))}.$$ 
It follows from the definition that  $f(\mathcal{J}(f)) \subseteq \mathcal{J}(f)$, so $f$ induces the sub-dynamical system $(\Jcal(f),f)$. We mention that $\mathcal{J}(f)$ could be empty. 

Given a disk $U$ in $X$, we say that $f$ is a \emph{(distance) scaling map} on $U$ if the following conditions hold:
\begin{enumerate}
\item $f(U)$ is a disk in $X$, and
\item there exists a real number  $\lambda>0$ such that  for any $x,y\in U$  we have: 
$$\rho(f(x),f(y))=\lambda\cdot\rho(x,y).$$
\end{enumerate}
In this case, we call $U$ a \emph{(distance) scaling disk} for $f$ and observe that $f$ is a bijection from $U$ onto $f(U)$. Observe that any trivial disk is a scaling disk.

We say that a point $x\in\mathcal{J}(f)$ is a \emph{regular} point if there exists a disk containing $x$ where $f$ is a scaling map; 
otherwise, it is called \emph{singular}. Denote by $\mathrm{\bf Sing}(f)$ the set of singular points of $f$ in $\mathcal{J}(f)$.

\begin{definition}
Given a non-archimedean dynamical system $(X, f)$, we say that  
\begin{enumerate}
\item $f$ is \emph{hyperbolic} if $\mathrm{\bf Sing}(f)$ is empty;
\item $f$ is \emph{subhyperbolic} if  $\mathrm{\bf Sing}(f)$ is finite and, for any $x\in\mathrm{Sing}(f)$, there is  $k_x>0$ such that $f^{\circ k_x}(x)$ is a repelling periodic point of $f$.
\end{enumerate}
\end{definition}

Trivially, if $f$ is hyperbolic, then it is subhyperbolic. Moreover, if $f$ is a subhyperbolic rational map with compact $\mathcal{J}(f)$, then $\Jcal(f)$ admits some expanding (singular) metric, see \cite{Fan24}.  

In the context of rational maps $\phi\in K(z)$, recall from \S\ref{sec:intro} the set of critical points $\mathrm{Crit}(\phi,K)$ and the Julia set $\mathcal{J}(\phi,K)$. Regarding $\phi$ as a continuous map defined on the non-archimedean space $\mathbb{P}^1(K)$, we immediately have that 
$$\mathrm{Crit}(\phi, K)\cap\mathcal{J}(\phi)=\mathrm{\bf Sing}(\phi).$$
Moreover, if $\phi$ is  subhyperbolic, then $\mathcal{J}(\phi,K)$ coincides to $\mathcal{\mathcal{J}(\phi)}$: 
 
\begin{lemma}\label{lem:J}
Let $\phi\in K(z)$ be a subhyperbolic rational map of degree at least $2$. Then 
$$\mathcal{J}(\phi,K)=\mathcal{\mathcal{J}(\phi)}.$$
In particular, 
$$\mathrm{Crit}(\phi, K)\cap\mathcal{J}(\phi,K)=\mathrm{\bf Sing}(\phi).$$
\end{lemma}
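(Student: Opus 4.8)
The plan is to prove the two inclusions $\mathcal{J}(\phi)\subseteq\mathcal{J}(\phi,K)$ and $\mathcal{J}(\phi,K)\subseteq\mathcal{J}(\phi)$ separately, the first being formal and the second carrying the content. For $\mathcal{J}(\phi)\subseteq\mathcal{J}(\phi,K)$ I would argue that a repelling periodic point of $\phi$ cannot be a point of equicontinuity of $\{\phi^{\circ n}\}_{n\ge1}$, so that $\mathrm{Fix}^{\ast}(\phi^{\circ n})\subseteq\mathcal{J}(\phi,K)$ for all $n$; since $\mathcal{F}(\phi,K)$ is open and $\mathcal{J}(\phi,K)$ is closed and invariant under both $\phi$ and $\phi^{-1}$ (standard facts about the $K$-Fatou/Julia sets, see e.g. \cite{Benedetto19}), the grand orbit $\mathrm{GO}\big(\bigcup_{n\ge1}\mathrm{Fix}^{\ast}(\phi^{\circ n})\big)$ stays inside $\mathcal{J}(\phi,K)$, and hence so does its closure $\mathcal{J}(\phi)$.

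For the reverse inclusion I would assume $\mathcal{J}(\phi,K)\ne\emptyset$ (otherwise $\mathbb{P}^1(K)$ contains no repelling periodic point of $\phi$, whence $\mathcal{J}(\phi)=\emptyset$ as well), and prove the non-archimedean analogue of the Fatou--Julia density theorem: the $K$-rational repelling periodic points are dense in $\mathcal{J}(\phi,K)$. Subhyperbolicity enters first through the post-critical data: $\mathrm{Crit}(\phi,K)\cap\mathcal{J}(\phi,K)$ is finite and consists of eventually periodic points, the periodic points occurring in their orbits are automatically repelling (they lie in $\mathcal{J}(\phi,K)$), so the ``post-critical set inside the Julia set'' $P:=\bigcup\{\phi^{\circ k}(c):k\ge 1,\ c\in\mathrm{Crit}(\phi,K)\cap\mathcal{J}(\phi,K)\}$ is a finite subset of $\mathrm{GO}\big(\bigcup_{n}\mathrm{Fix}^{\ast}(\phi^{\circ n})\big)\subseteq\mathcal{J}(\phi)$. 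It therefore suffices to approximate an arbitrary $x\in\mathcal{J}(\phi,K)\setminus P$ by $K$-rational repelling periodic points.

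Given such an $x$ and $\delta>0$ chosen small enough that $\overline{D}(x,\delta)$ avoids $P$ (and, with more care, the critical values of the iterates that will appear), I would use that $x\notin\mathcal{F}(\phi,K)$ together with Hsia's non-archimedean Montel theorem (see \cite{Benedetto19}) to conclude that $\{\phi^{\circ n}|_{D(x,\delta)}\}$ is not equicontinuous, so the orbit disks $\phi^{\circ n}(D(x,\delta))$ cannot all remain of small diameter; since $\deg\phi\ge 2$, a bounded number of further iterations then produce a set containing $\overline{D}(x,\delta)$. Subhyperbolicity is used again here: having kept the critical values of the relevant iterate off $\overline{D}(x,\delta)$, one obtains an injective analytic branch of some $\phi^{\circ m}$ carrying a subdisk $W$ within $\delta$ of $x$ bijectively onto a closed disk strictly containing $W$. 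A bijection between two nested closed disks scales distances by the ratio of their diameters, so this branch is expanding; its inverse branch is then a contraction of a complete ultrametric disk, whose unique fixed point is a $K$-rational repelling periodic point of $\phi$ within $\delta$ of $x$. Letting $\delta\to 0$ gives $x\in\mathcal{J}(\phi)$, and combining the resulting equality $\mathcal{J}(\phi,K)=\mathcal{J}(\phi)$ with the identity $\mathrm{Crit}(\phi,K)\cap\mathcal{J}(\phi)=\mathrm{Sing}(\phi)$ recalled before the lemma yields the displayed ``in particular''.

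The main obstacle is precisely the critical-point bookkeeping needed to make the third paragraph rigorous. Because $K$ need not be algebraically closed and $\mathcal{J}(\phi,K)$ need not be compact, one must show that, after shrinking $\delta$, the orbit disks remain genuine disks of controlled diameter and that the critical values of $\phi^{\circ m}$ --- a finite piece of the full post-critical set, which also contains the orbits of critical points lying in the $K$-Fatou set --- stay off $\overline{D}(x,\delta)$; here one expects to invoke the classification of $K$-Fatou components (\cite{Benedetto02}) to rule out a Fatou critical orbit accumulating at $x\notin\overline{P}$. This is where the finiteness and preperiodicity packaged in the definition of subhyperbolicity, rather than the mere finiteness of $\mathrm{Crit}(\phi,K)$, is genuinely needed, and also what forces the periodic points produced to be $K$-rational rather than only $\bar{K}$-rational. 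Alternatively, if a density statement of this kind for subhyperbolic maps over general $K$ is already available in the literature (e.g. in the works that inspired the definition), the second inclusion can instead be quoted and only the formal first inclusion need be written out.
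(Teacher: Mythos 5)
Your first inclusion matches the paper's and is fine. For the reverse inclusion, however, your plan has a genuine gap, and it is exactly the one you flag at the end without resolving: the circular dependence between the radius $\delta$ and the iterate $m$. You want an injective (hence scaling) branch of $\phi^{\circ m}$ over a disk near $x$, which requires that disk and its forward images under the first $m$ iterates to stay clear of critical points and critical values; but $m$ is produced by Montel only \emph{after} $\delta$ is fixed, and for some $x$ no choice of $\delta$ works at all. Concretely, if $x$ lies in the closure of the set of iterated preimages of a critical point $c\in\mathrm{Crit}(\phi,K)\cap\mathcal{J}(\phi,K)$, then \emph{every} disk $\overline{D}(x,\delta)$ contains a point mapped onto $c$ after finitely many steps, so every sufficiently high iterate fails to be injective on $\overline{D}(x,\delta)$ and your contraction/fixed-point construction never gets off the ground. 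Your reduction to $x\notin P$ does not cover this: $P$ is the finite \emph{forward} postcritical set in the Julia set, whereas the obstruction comes from the (typically infinite, possibly accumulating everywhere) \emph{backward} orbits of the Julia critical points.

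The missing idea is that you do not need repelling periodic points to be dense. By definition $\mathcal{J}(\phi)$ is the closure of the \emph{grand orbit} of the repelling cycles, and subhyperbolicity places every $c\in\mathrm{Crit}(\phi,K)\cap\mathcal{J}(\phi,K)$ -- hence its entire backward orbit -- inside that grand orbit. The paper's proof is a clean dichotomy built on this: fix $x\in\mathcal{J}(\phi,K)$ and apply non-archimedean Montel (\cite[Corollary D]{Favre12}) to get $U\subseteq\phi^{\circ n_U}(U)$ for small neighborhoods $U$ of $x$. Either some such $U$ is a scaling disk for $\phi^{\circ n_U}$, in which case $U$ contains a repelling periodic point directly (your expanding-branch argument, but with no critical points to dodge); or no neighborhood is scaling, in which case $x$ is an accumulation point of iterated preimages of some Julia critical point $c$, and then $x\in\overline{\mathrm{GO}(\cup_n\mathrm{Fix}^*(\phi^{\circ n}))}=\mathcal{J}(\phi)$ with no further work. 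This is where subhyperbolicity is genuinely used -- not, as you suggest, to control Fatou critical orbits accumulating on the Julia set. If you insist on proving density of repelling periodic points (a stronger statement), you must first dispose of the second branch of the dichotomy by a separate argument, which your write-up does not do.
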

\begin{proof}
Since $\mathcal{F}(\phi,K)$ is forward invariant under $\phi$ by definition, it follows that $\mathcal{J}(\phi,K)$ is backward invariant under $\phi$. 
Noting that $\mathrm{Fix}^\ast(\phi^{\circ n})$ is a subset of $\mathcal{J}(\phi,K)$ for any $n\ge 1$, since $\mathcal{J}(\phi, K)$ is closed, we conclude that $\mathcal{\mathcal{J}(\phi)}\subseteq\mathcal{J}(\phi,K)$. 

Now pick $x\in\mathcal{J}(\phi,K)$ and consider a sufficiently small neighborhood $U$ of $x$ in $\mathbb{P}^1(K)$. By the non-archimedean Montel's Theorem, see \cite[Corollary D]{Favre12} (see also \cite{Hsia00}), there exists $n:=n_U\ge 1$ such that $U$ is a subset of $\phi^{\circ n_U}(U)$. If $\phi^{\circ n}$ is a scaling map on $U$, then $U$ contains a point in $\mathrm{Fix}^\ast(\phi^{\circ n})$. If $\phi^{\circ n}$ is not a scaling map on $U$ for any neighborhood $U$ of $x$, then $x$ is an accumulation point of the iterated preimages of a critical point, say  $c$, in $\mathrm{Crit}(\phi, K)\cap\mathcal{J}(\phi,K)$. Since $\phi$ is subhyperbolic, the point $c$ is contained in $\mathrm{GO}(\cup_{n\ge 1}\mathrm{Fix}^\ast(\phi^{\circ n}))$. It follows that $x\in \mathcal{\mathcal{J}(\phi)}$. Hence $\mathcal{J}(\phi,K)\subseteq\mathcal{\mathcal{J}(\phi)}$. 
\end{proof}


\subsection{Artin-Mazur zeta function}\label{sec:AM}

In this section, we state the Artin-Mazur zeta function in several settings. We refer to \cite{Pollicott02, Pollicott20} for references.

For a continuous map $f: X\to X$ on a non-archimedean space $X$, if $\mathrm{Fix}^*(f^{\circ m})$ is a finite set for each $m\ge 1$,  the Artin-Mazur zeta function on $\mathcal{J}(f)$ is 
 defined by 
$$\zeta_f(X, t):=\exp\left(\sum_{m=1}^\infty\, \#\mathrm{Fix}^*(f^{\circ m})\cdot\frac{t^m}{m}\right).$$
In fact, $\zeta_f(X, t)\in\mathbb{Z}[[t]]$ from the product formula
$$\zeta_f(X, t)=\prod_{\text{repelling cycles}\ \gamma}(1-t^{\#\gamma})^{-1}.$$

The Artin-Mazur zeta function is also well-defined for finite directed graphs. Consider a finite directed graph  $G$. Let $A_G$ be its \emph{adjacency matrix} of $G$, that is,  each entry of $A_G$ represents the number of directed edges of $G$ connecting the corresponding vertices. Then $A_G$ has non-negative integer entries.  The Artin-Mazur zeta function for $G$ is defined by 
$$\zeta(G, t):=\exp\left(\sum_{m=1}^\infty\, \mathcal{N}_m(G)\cdot\frac{t^m}{m}\right),$$
where $\mathcal{N}_m(G)$ is the number of loops consisting of $m$ (not necessarily distinct) directed edges. 
It follows that $\mathcal{N}_m(G)=\trace((A_G)^m)$ and 
$$\zeta(G, t)=\det(I-tA_G)^{-1},$$
where $I$ is the identity matrix of the same size as $A_G$. Let $\lambda\ge 1$ be the leading eigenvalue of $A_G$ (in fact, $\lambda$ is a weak Perron number). Then, as a function in $\mathbb{C}$, the function $\zeta(G,t)$ is non-zero and analytic for $|t|<1/\lambda$ and has a simple pole at $t=1/\lambda$. 

Moreover, given a finite matrix  $A$ with non-negative integer entries, let $G_A$ be a finite directed graph such that $A_G=A$. Then 
$$\zeta(G_A, t)=\det(I-tA)^{-1}.$$

Now for symbolic dynamical systems, consider a subshift of finite type $(\Sigma_B,\sigma)$ with adjacency matrix $B$, where $\sigma$ is the left shift on $\Sigma_B$. The Artin-Mazur zeta function on $\Sigma_A$ is defined by 
$$\zeta_\sigma(\Sigma_B, t):=\exp\left(\sum_{m=1}^\infty\, \#\mathrm{Fix}_B(\sigma^{\circ m})\cdot\frac{t^m}{m}\right),$$
where $\mathrm{Fix}_B(\sigma^{\circ m})$ is the set of the periodic points of $\sigma$ of period $m$ in $\Sigma_B$.  Then 
$$\zeta_\sigma(\Sigma_B, t)=\zeta(G_B, t)=\det(I-tB)^{-1}.$$

Now consider a continuous map $f: X\to X$ on a non-archimedean space $X$ and a subshift of finite type $(\Sigma_B,\sigma)$. If $(\mathcal{J}(f),f)$ is topologically conjugate to $(\Sigma_B,\sigma)$, then 
$$\zeta_f(X, t)=\zeta_\sigma(\Sigma_B, t)=\det(I-tB)^{-1};$$
moreover, the topological entropy $\mathfrak{h}(f)$ for $(\mathcal{J}(f),f)$  and the topological entropy $\mathfrak{h}_B(\sigma)$ for $(\Sigma_B,\sigma)$ satisfy that 
$$\mathfrak{h}(f)=\mathfrak{h}_B(\sigma)=\log\lambda_B=\limsup_{m\rightarrow\infty}\frac{1}{m}\log\#\mathrm{Fix}_B(\sigma^{\circ m})=\limsup_{m\rightarrow\infty}\frac{1}{m}\log\#\mathrm{Fix}^*(f^{\circ m}),$$
 where $\lambda_B\ge 1$ is the leading eigenvalue of $B$.

\section{Rationality of the zeta function}\label{sec:zeta-ra}
In this section, we  assume that $(X,\rho)$ is a complete non-archimedean metric space and let $f:X\to X$ be a subhyperbolic continuous map with nonempty Julia set $\mathcal{J}(f)$. Assuming that the set of singular points $\Sing(f)$ of $f$ is nonempty. Then, by subhyperbolicity, for each singular point $x\in\Sing(f)$ there is a unique repelling cycle $\gamma_x\subset \Ocal_{f}(x) $ where the orbit of $x$ eventually lands. 
Let $\Gamma(f) \subset \Ocal_{f}(\Sing(f))$ be the set of all repelling cycles $\g_{x}$ for $x\in \Sing(f).$ Note that  $\Gamma(f)$ is a finite set since $\mathrm{\bf Sing}(f)$ is finite. 

Our goal is to establish the rationality of the zeta function for $f$ under some assumptions on $\mathcal{J}(f)$ and $\mathrm{Sing}(f)$. 

\subsection{Definitions and main result}\label{sec:result}
First, we introduce some constraints on the behavior of points in the neighborhood of the singular points of $f$ under its action. In the following,  we use the notations $\diam(V)$ to denote the diameter of a disk $V$ and  $\rho(x, V)$ to denote the  the distance between $x$ and $V$, as usual. 
\begin{definition}\label{def:normal}
A point $x\in\mathrm{\bf Sing}(f)$ is \emph{normal} if there exists a disk $U\subseteq X$ containing $x$ and a positive number $r_x\le 1$ such that
\begin{enumerate}
\item $x$ is the unique singular point in $U$, that is, $U\cap \mathrm{\bf Sing}(f)=\{x\}$, 
\item $f(y)\not=f(x)$ for any $y\in U\setminus\{x\}$, 
\item any maximal scaling disk  $V$ in $U\setminus\{x\}$ has diameter at least $r_x\cdot\rho(x,V)$, and 
\item the image $f(V)$ of $V$ has diameter at least $r_x\cdot\rho(f(x), f(V))$. 
\end{enumerate}
\end{definition}

\begin{remark} \label{rem:normal}
We call a disk $U$ that satisfies all the conditions in Definition~\ref{def:normal} a \emph{normal disk} for the singular point $x$. By condition~(3) of the definition,  any disk $V'\subset \left(U \setminus \{x\}\right)$ with $\diam(V')/\rho(x, V') \le r_{x}$ is a scaling disk for $f$. Moreover, condition~(4) ensures that  any disk $V' \subset \left(U \setminus \{x\}\right)$ satisfying $\diam(f(V'))/\rho\left(f(x), f(V')\right) \le r_{x}$ is likewise a scaling disk for $f$.
\end{remark}

In what follows, for each $\gamma\in\Gamma(f)$, we fix a marked point $p_{\gamma}\in\gamma$. For any $x\in \GO(\g)$,   we define $\ell_x\ge 0$ to be the smallest integer $\ell$ such that $f^{\circ \ell}(x)=p_{\gamma}$.  

Let $s\ge 1$ be the  length of the repelling cycle $\g$.  Then  there exists a disk $U$ containing $p_{\g}$ such that:
\begin{itemize}
\item $U$ is scaling for $f^{\circ s}$, and 
\item   $U\subsetneq f^{\circ s}(U)$.
\end{itemize}
Since $f^{\circ s}$ is scaling on $U$ and fixes $p_{\g}$, its image $U' = f^{\circ s}(U)$ is also a disk of $X$ containing $p_{\g}.$ Thus,  the restriction of $f^{\circ s}$ to $U$ gives rise to a (local) homeomorphism 
$f^{\circ s}: U \to U'$ that fixes $p_{\g}.$ Then one can show that the points in $\Jcal(f) \cap U$ exhibit a certain homogeneous property under the action of $f^{\circ s}$.  

Now, consider the pullbacks of $U$ and $U'$ by $f$ along the (truncated) orbit of $x \ne p_{\g}$ 
$$\Ocal_{f}(x, \ell_{x}) := \{x, f(x), \ldots, f^{\circ(\ell_{x} -1)} (x)\}.$$ 
If this orbit contains no singular points, one may naturally expect that a similar phenomenon persists in a neighborhood of each point in $\Ocal_{f}(x, \ell_{x})$. However, this cannot generally be expected near singular points. To address this, we introduce the following condition: 

\begin{definition}\label{def:homogeneous}
We say that $f$ satisfies the \emph{Homogeneous Condition} if there exists 
a integer valued function $\varphi:\Gamma(f)\to\mathbb{Z}_{>0}$ and, 
for each $x\in\mathrm{\bf Sing}(f)$, there exist a disk $V_x\subset X$ containing $x$ and a  scaling map $T_x: V_x\to X$ fixing $x$
 such that  the following conditions hold: 
\begin{enumerate}
\item  $f^{\circ \ell_x}\circ T_x=f^{\circ(\varphi(\gamma_x)+\ell_x)}$ on $V_x$; 
\item if $y:=f^{\circ j}(x)\in\mathrm{Sing}(f)$ for some $j\ge 1$, then 
$f^{\circ j}(V_x)\subseteq V_y$ and 
$$f^{\circ j}\circ T_x=T_y\circ f^{\circ j}\quad \text{on $V_{x}$}.$$
\end{enumerate} 
\end{definition}

\noindent See Figure \ref{f:homo} below for an illustration of Definition \ref{def:homogeneous}. 

 \begin{figure}[h]
     \includegraphics[width=0.5\textwidth]{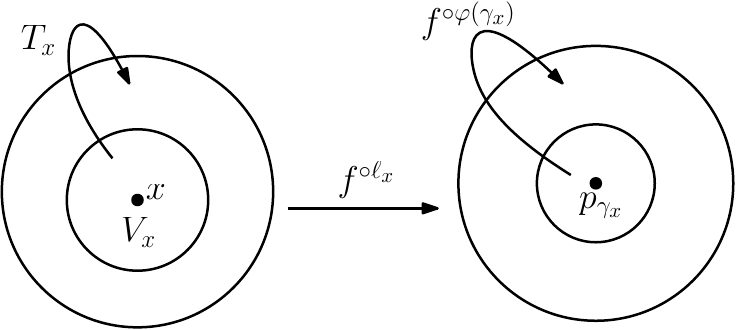}
	\caption{The map $T_x$ in Definition \ref{def:homogeneous}.}
    \label{f:homo}
\end{figure}

\begin{remark}
\begin{enumerate}
\item
Suppose that $f$ satisfies the Homogeneous Condition at a point $x\in\mathrm{\bf Sing}(f)$. Then an appropriate choice of $\varphi(\gamma)$ can be taken to be a multiple of the lengths of  all 
$\gamma\in\Gamma(f)$. 
\item
If $f$ is a subhyperbolic rational map defined over a non-archimedean field of characteristic $0$, then each of its singular points (the critical points in $\mathcal{J}(f)$) is normal and $f$ satisfies  the Homogeneous Condition, see \S\ref{sec:proof-rational}. 
\end{enumerate}
\end{remark}


The main goal in this section is to establish the following rationality result; its proof relies on a series of results which will be given in the following  subsections. 
\begin{theorem}\label{thm:zeta}
Let $f:X\to X$ be a non-archimedean subhyperbolic continuous map. Assume that 
\begin{enumerate} 
\item $\mathcal{J}(f)$ is compact,
\item each point in $\mathrm{Sing}(f)$ is normal, and 
\item $f$ satisfies the homogeneous condition. 
\end{enumerate}
Then for each $n\ge 1$, 
$$\#\mathrm{Fix}^*(f^{\circ n})<+\infty.$$
Moreover, 
$\zeta_f(X,t)=P(t)/Q(t)$, where $P(t), Q(t)$ in $\mathbb{Z}[t]$ are coprime and  $P(t)$ is a product of cyclotomic polynomials.
\end{theorem}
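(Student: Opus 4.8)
The plan is to encode the Julia dynamics $(\mathcal{J}(f),f)$ symbolically and to reduce the rationality of $\zeta_f(X,t)$ to the rationality of the zeta function of a subshift of finite type (which is $\det(I-tB)^{-1}$, as recalled in \S\ref{sec:AM}). First I would use the compactness of $\mathcal{J}(f)$ together with the finiteness of $\mathrm{Sing}(f)$ to produce a finite clopen partition $\Pcal=\{P_1,\dots,P_N\}$ of $\mathcal{J}(f)$ by disks, adapted to the singular points: each $P_i$ is either disjoint from $\mathrm{Sing}(f)$ and small enough that $f$ is a scaling map on it (so $f(P_i)$ is again a disk meeting $\mathcal{J}(f)$ in a clopen set that is a union of members of a refinement of $\Pcal$), or $P_i$ is a normal disk $U_x$ around a unique singular point $x$, suitably shrunk. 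Non-archimedean Montel (as used in Lemma~\ref{lem:J}) plus compactness gives that finitely many such disks suffice; the strong triangle inequality guarantees that the images and preimages of disks are disks or disjoint unions of disks, so the combinatorics stays finite. This is the ``countably many states'' encoding promised in the introduction; the transition data is recorded in a (possibly infinite, but locally finite) directed graph $G$.

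The key step — and the expected main obstacle — is to upgrade this to a \emph{finite}-state subshift. Near a singular point $x$ the partition must be refined infinitely often (the annuli $A_x$ accumulating at $x$), producing infinitely many states. Here the homogeneous condition is exactly what collapses this: the scaling map $T_x:V_x\to X$ identifies the annulus $T_x^{-1}(A_x)\cap\mathcal{J}(f)$ with $A_x\cap\mathcal{J}(f)$, and the relation $f^{\circ\ell_x}\circ T_x=f^{\circ(\varphi(\gamma_x)+\ell_x)}$ (together with the compatibility $f^{\circ j}\circ T_x=T_{y}\circ f^{\circ j}$ along the forward orbit of $x$) shows that the transition behavior at the $k$-th annulus and the $(k+1)$-st annulus differ only by a fixed ``shift'' through $\varphi(\gamma_x)$ extra steps along the cycle $\gamma_x$. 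Thus the infinitely many annular states fall into finitely many $T_x$-equivalence classes once one passes to a block recoding (grouping $\varphi(\gamma_x)$ consecutive symbols). Carrying this out carefully — choosing the block length to be a common multiple of the lengths of the cycles in $\Gamma(f)$ and of the values $\varphi(\gamma)$, and checking that the resulting identification is compatible at overlaps of normal disks and along orbits — is the technical heart. The output is a subshift of finite type $(\Sigma_B,\sigma)$ with a finite $(0,1)$- (or nonnegative integer) matrix $B$, together with a topological conjugacy, or at least a finite-to-one semiconjugacy with controlled fibers, between $(\mathcal{J}(f),f)$ and $(\Sigma_B,\sigma)$.

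Granting the conjugacy $(\mathcal{J}(f),f)\cong(\Sigma_B,\sigma)$, the finiteness $\#\mathrm{Fix}^*(f^{\circ n})=\#\mathrm{Fix}_B(\sigma^{\circ n})=\operatorname{trace}(B^n)<\infty$ is immediate, and $\zeta_f(X,t)=\zeta_\sigma(\Sigma_B,t)=\det(I-tB)^{-1}$, which is a quotient of polynomials in $\mathbb{Z}[t]$; writing it in lowest terms gives coprime $P,Q\in\mathbb{Z}[t]$. If instead the symbolic model is only a finite-to-one extension, I would track the (finitely many, periodic) points where the fibers are non-trivial — these come precisely from the marked orbits $\gamma\in\Gamma(f)$ and the gluing along $\mathrm{Sing}(f)$ — and correct the count $\#\mathrm{Fix}^*(f^{\circ n})$ by a term $\operatorname{trace}(B'^n)$ for a second finite matrix $B'$ (an inclusion–exclusion over the overcounted periodic orbits). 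Then $\zeta_f(X,t)=\det(I-tB)^{-1}\cdot\det(I-tB')^{\pm 1}$ is still rational over $\mathbb{Z}$, and cancelling common factors yields the coprime representation. The main obstacle, to reiterate, is the finite-state reduction at the singular points; everything downstream is the standard transfer-matrix computation for subshifts of finite type.
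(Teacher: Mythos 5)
Your proposal follows essentially the same route as the paper: a countable Markov partition of $\mathcal{J}(f)$ from compactness and normality, collapsed to a finite subshift via the $T_x$-scaling maps supplied by the homogeneous condition, and then $\zeta_f(X,t)=\det(I-tB)^{-1}$ corrected by a finite product of factors $(1-t^{\ell})$ accounting for the finitely many periodic cycles where the coding is not a bijection. The only cosmetic difference is that in the paper the finite-state coding $g:\mathcal{J}(f)\to\Sigma_B$ is injective but not surjective (the correction removes extra periodic cycles of $\Sigma_B$ rather than collapsing non-trivial fibers), which yields the same rational correction factor you describe.
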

Under the assumptions in Theorem \ref{thm:zeta}, we will describe the dynamics of $f$ on $\Jcal(f)$ with a finite Markov partition, which in fact leads to an exact formula for $\zeta_f(X,t)$, see \S\ref{sec:factoring} and \S\ref{sec:pr}. 

In the following subsections, our goal is to establish Theorem~\ref{thm:zeta}. Throughout this part, we assume that the Julia set $\mathcal{J}(f)$ is a compact subset of $X$.

We begin in \S\ref{sec:dyn} by analyzing the local dynamics along the orbits of singular points, under the assumption that the singular set $\Sing(f)$ is nonempty and satisfies the {\em Normal Condition} and the {\em Homogeneous Condition}. In \S\ref{sec:splitting}, we construct a natural countable Markov partition of $\mathcal{J}(f)$ through a procedure called {\em splitting}. Then, in \S\ref{sec:factoring}, we refine this countable Markov partition into a finite one by applying the Homogeneous Condition. Finally, in \S\ref{sec:pr}, we use the refined Markov partition to complete the proof of Theorem~\ref{thm:zeta}.

\subsection{Dynamics along the orbits of singular points}\label{sec:dyn}
In this subsection, we assume that the Julia set $\mathcal{J}(f)$ is compact, and that the set of singular points $\mathrm{\bf Sing}(f)$ is nonempty. The main purpose of the subsection is to study the local dynamics around the set of the singular points of $f$. We focus on two specific cases: when the set of singular points $\Sing(f)$ satisfies the Normal Condition (see Definition~\ref{def:normal}) and, furthermore, when it satisfies  the Homogeneous Condition (see Definition~\ref{def:homogeneous}). Our goal is to construct appropriate coverings of the Julia set $\Jcal(f)$ such that the dynamical system $(\Jcal(f), f)$ can be encoded in terms of a suitably defined symbolic dynamical system.

\subsubsection{Normal Condition and Markov Property}\label{subsec:normal}
We begin by analyzing the dynamics in neighborhoods along the points of $\Ocal_{f}(\Sing(f))$ under the assumption that the set of singular points $\Sing(f)$ satisfies the Normal Condition.
For $\gamma\in\Gamma(f)$, we set  
$$\mathrm{GO}^*(\gamma):=\bigcup_{x\in \mathrm{GO}(\gamma)\cap\mathrm{\bf Sing}(f)} \Ocal_{f}(x). $$
We will construct, for each $\g \in \Gamma(f)$, a neighborhood of $\mathrm{GO}^*(\gamma)$ which contains, in addition to  $\mathrm{GO}^*(\gamma)$ itself, countably many scaling disks intersecting $\mathcal{J}(f)$: 

\begin{lemma}\label{lem:singular}
For each $\g \in \Gamma(f)$, there exist a neighborhood $\overline W(\gamma)$ of $\mathrm{GO}^*(\gamma)$ which is a disjoint union of finitely many closed disks in $X$ and a collection $\mathcal{U}(\gamma)$ of countably many scaling disks contained in $\overline W(\gamma)$ such that the following hold:
 \begin{enumerate}
\item For each  $U\in\mathcal{U}(\gamma)$, $U\cap\mathcal{J}(f)\not=\emptyset$. 
\item $\bigcup_{U\in\mathcal{U}(\gamma)} U$ covers $\left(\overline W(\gamma)\cap\mathcal{J}(f)\right)\setminus\mathrm{GO}^*(\gamma)$. 
\item For any $U\in\mathcal{U}(\gamma)$, either $f(U)\in\mathcal{U}(\gamma)$ or $f(U)\cap \overline W(\gamma')=\emptyset$ for all $\g' \in \Gamma(f).$  
\end{enumerate}
\end{lemma}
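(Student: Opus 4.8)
The plan is to build $\overline W(\gamma)$ as a union of normal disks around the finitely many singular points belonging to $\mathrm{GO}^*(\gamma)$, and then to obtain $\mathcal U(\gamma)$ by a combination of decomposing the annuli around those singular points into maximal scaling disks and pulling back. First I would note that $\mathrm{GO}^*(\gamma)$ meets $\mathrm{Sing}(f)$ in a finite set: by definition $\mathrm{GO}^*(\gamma)$ is the union of the forward orbits of the points of $\mathrm{GO}(\gamma)\cap\mathrm{Sing}(f)$, and since $\mathrm{Sing}(f)$ is finite and every point of $\mathrm{GO}(\gamma)\cap\mathrm{Sing}(f)$ is eventually periodic (landing on $\gamma$), its intersection with $\mathrm{Sing}(f)$ is finite, say $\{x_1,\dots,x_k\}$. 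For each $x_i$ pick a normal disk $U_{x_i}$ as in Definition~\ref{def:normal}; shrinking if necessary we may assume the $U_{x_i}$ are pairwise disjoint or nested according to the ultrametric, and in fact (after replacing by the smallest relevant choices) that the only points of $\mathrm{GO}^*(\gamma)$ near $x_i$ inside $U_{x_i}$ are those eventually mapping onto $\gamma$. The remaining points of $\mathrm{GO}^*(\gamma)$ — the points of $\gamma$ itself and the finitely many ``pre-periodic tail'' points $f^{\circ j}(x_i)$ that are not singular — are regular, hence each has a scaling disk around it; I include one such disk $U_p$ for each of these finitely many points as well. Set $\overline W(\gamma)$ to be the union of the closures of all these disks; it is a finite union of closed disks and a neighborhood of $\mathrm{GO}^*(\gamma)$.

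Next I would construct $\mathcal U(\gamma)$. Inside each normal disk $U_{x_i}$, the complement $U_{x_i}\setminus\{x_i\}$ is an ultrametric annulus (minus the singular point) which decomposes canonically into maximal scaling disks; by condition (3) of normality each such disk $V$ has diameter $\ge r_{x_i}\rho(x_i,V)$, so there are only countably many of them, accumulating only at $x_i$, and they cover $(U_{x_i}\cap\mathcal J(f))\setminus\{x_i\}$. Put all of these disks into an initial family. For the scaling disks $U_p$ around regular points of $\mathrm{GO}^*(\gamma)$, throw in $U_p\setminus\mathrm{GO}^*(\gamma)$ decomposed into maximal scaling subdisks likewise. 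Now I would close this family under the requirement (3): whenever $U$ is in the current family, look at $f(U)$; if $f(U)$ meets $\overline W(\gamma)$ but is not already a union of members of the family, refine — using that the family members are small scaling disks and $f$ restricted to each is a bijection onto a disk — and take forward/backward images to ensure compatibility. A cleaner way to phrase this: first shrink all the chosen disks so that $f$ maps each member either entirely outside $\overline W(\gamma)$ or into a single one of the ``model'' regions (a maximal scaling annular disk inside some $U_{x_i}$, or some $U_p$); because $f$ is finite-to-one and continuous and $\overline W(\gamma)$ is compact, finitely many such shrinkings suffice. Then pull back: since $\overline W(\gamma)\cap\mathcal J(f)$ is backward invariant in the relevant sense along $\gamma$'s grand orbit, iterated preimages under $f$ of the model disks (restricted to $\overline W(\gamma)$, and discarding any preimage not meeting $\mathcal J(f)$) give a countable family $\mathcal U(\gamma)$ of scaling disks. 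By construction each $U\in\mathcal U(\gamma)$ satisfies $U\cap\mathcal J(f)\ne\emptyset$ and $U\subseteq\overline W(\gamma)$, giving (1); the union covers $(\overline W(\gamma)\cap\mathcal J(f))\setminus\mathrm{GO}^*(\gamma)$ because every such point lies in some $U_{x_i}$ or $U_p$ and is disjoint from $\mathrm{GO}^*(\gamma)$, hence lies in a maximal scaling subdisk which was included or refined, giving (2); and the closure-under-$f$ step gives exactly (3).

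The main obstacle I anticipate is (3) together with keeping the family countable: naively decomposing each $U_{x_i}\setminus\{x_i\}$ into maximal scaling disks is fine, but the images $f(V)$ of these disks need not themselves be maximal scaling disks of another model region — they could straddle the boundary between a scaling region and a singular point, or overlap several model pieces. Ensuring $f(U)\in\mathcal U(\gamma)$ (rather than merely $f(U)\subseteq\bigcup\mathcal U(\gamma)$) forces a compatible refinement across all of $\overline W(\gamma)$ at once, and one has to check this refinement process terminates in a countable family. Here condition (4) of normality (the image $f(V)$ of a maximal scaling disk $V$ has diameter $\ge r_x\rho(f(x),f(V))$) is exactly what is needed: it guarantees that images of scaling disks near a singular point are again ``uniformly thick'' relative to their distance from the image singular point, so after matching up with the model decomposition around $f(x)$ only a bounded amount of subdivision is needed at each step, and the preimage tree has countably many nodes. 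The compactness of $\mathcal J(f)$ is used to reduce everything to finitely many initial choices (finitely many singular points, finitely many tail points, a uniform lower bound for how much one must shrink), so that the inductive construction does not blow up.
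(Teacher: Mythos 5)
Your setup (finiteness of $\mathrm{GO}^*(\gamma)$, normal disks around the singular points, scaling disks around the regular points of the orbit, compactness of $\mathcal{J}(f)$ for countability) matches the paper, and your final ``pull back a model family'' phrasing is the right idea. But the step you yourself flag as the main obstacle --- closing the family under $f$ so that $f(U)$ is literally a member of $\mathcal{U}(\gamma)$, while keeping every member a scaling disk and the whole family countable --- is exactly the content of the lemma, and your proposal does not actually carry it out. Starting from the maximal scaling disks of each $U_{x_i}\setminus\{x_i\}$ and then ``refining for compatibility'' creates a backward-propagating subdivision problem: refining a disk $V$ breaks condition (3) for every $U$ with $f(U)=V$, and near the cycle the backward orbit of a disk under the return map $f^{\circ s}$ is an infinite chain accumulating at $p$, so one must argue that each disk is subdivided only finitely often. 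You assert this follows from normality (4) and ``bounded subdivision at each step'' but give no argument. Also, your appeal to compactness of $\overline W(\gamma)$ is unjustified --- closed disks in a general complete non-archimedean space need not be compact; only $\mathcal{J}(f)$ is assumed compact --- and in any case finitely many initial shrinkings cannot resolve a compatibility condition that must hold along infinitely many disks.

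The paper sidesteps the refinement process entirely by making one uniform choice at the marked point: fix $\delta<\prod_x r_x$ (product over the singular points in $\mathrm{GO}^*(\gamma)$) and let $\mathcal{U}_p$ be the \emph{maximal} closed disks $U\subseteq\overline D_p\setminus\{p\}$ with $\mathrm{diam}(U)\le\delta\cdot\rho(p,U)$ meeting $\mathcal{J}(f)$. Because $f^{\circ s}$ is a scaling map on $\overline D_p$, it preserves the ratio $\mathrm{diam}(U)/\rho(p,U)$, so this family is automatically carried into itself (or out of $\overline W(\gamma)$) by the dynamics --- no a posteriori refinement is ever needed. Every other family $\mathcal{U}_x$ is then \emph{defined} as the set of exact $f^{\circ\ell_x}$-preimage disks of members of $\mathcal{U}_p$; normality condition (4), combined with $\delta<\prod_x r_x$, shows in one stroke that each such preimage sits inside a maximal scaling disk and hence is itself a scaling disk. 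Condition (3) is then immediate from the definition (an $f$-image of a pullback of $V\in\mathcal{U}_p$ is again a pullback of $V$, unless the disk lies in $\overline D_p\setminus f^{-s}(\overline D_p)$, in which case a separation condition on the $\overline D_x$ forces the image out of $\overline W(\gamma)$). If you want to salvage your version, you should replace the maximal-scaling-disk decompositions and the refinement step by this single threshold-$\delta$ family and its exact pullbacks.
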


\begin{proof}
Let $\g \in \Gamma(f)$ be given and let $p:=p_\gamma \in \gamma$ be the chosen marked point for $\gamma$. We pick a closed disk $\overline D$ in $X$ containing $p$ and pull back $\overline D$ along the orbits in $\mathrm{GO}^*(\gamma)$. Namely, for each point $x\in \mathrm{GO}^*(\gamma)$, we associate a disk $\overline D_x,$ defined as the largest closed disk containing $x$ such that $f^{\circ \ell_{x}}(\overline D_x)\subseteq \overline D.$ 
Let $p:=p_\gamma \in \gamma$ be the chosen marked point for $\gamma$. We pick a closed disk $\overline D$ in $X$ containing $p$ and pull back $\overline D$ along the orbits in $\mathrm{GO}^*(\gamma)$. Namely, for each point $x\in \mathrm{GO}^*(\gamma)$, we associate a disk $\overline D_x,$ defined as the largest closed disk containing $x$ such that $f^{\circ \ell_{x}}(\overline D_x)\subseteq \overline D.$ 

Let  $s\ge 1$ denote the length of $\gamma$. we can shrink the disk $\overline D$ if necessary so that the following conditions are satisfied: 
\begin{enumerate}
\item The disk $\overline D_{p}=\overline D$ is a scaling disk of $f^{\circ s}$. 
\item The disks $\overline D_x$ for $x\in \mathrm{GO}^*(\gamma)$ are pairwise disjoint. 
\item  For any $y\in\gamma$, if $f^{\circ s}(\overline{D}_y)\cap\overline D_x\not=\emptyset$, then $x=y$. 
\item If $\overline D_x\cap\mathrm{\bf Sing}(f)=\emptyset$, then $\overline D_x$ is a scaling disk for $f$. 
\item If $\overline D_x\cap\mathrm{\bf Sing}(f)\not=\emptyset$ , then $x$ is the unique singular point in $\overline D_x$ and $\overline D_x$ is a normal disk of $x$.
\end{enumerate}
Set 
$$\overline{W}(\gamma):=\bigcup_{x\in\mathrm{GO}^*(\gamma)}\overline D_x,$$
 and observe that for any $y\in\gamma$, the disk $\overline D_y$ is a scaling disk of $f^{\circ s}$.  

For each $z\in\mathrm{GO}^*(\gamma)\cap\mathrm{\bf Sing}(f)$, recall from  Definition \ref{def:normal} the positive number $r_z\le 1$. We fix a  $\delta$  such that 
$$0<\delta<\prod_{z\in\mathrm{GO}^*(\gamma)\cap\mathrm{\bf Sing}(f)}r_z.$$
 In $\overline D_p$, we consider  the following collection of closed disks:
 \begin{equation}\label{eqn:U_p} 
\mathcal{U}_p:=\left\{\text{maximal closed disks}\ U\subseteq \overline D_p\setminus\{p\}: \mathrm{diam}(U)\le\delta\cdot\rho(p,U)\ \text{and}\ U\cap\mathcal{J}(f)\not=\emptyset\right\}.
\end{equation}

For any $x\in\mathrm{GO}^*(\gamma)$, since $f^{\circ \ell_{x}}(\overline D_x)\subseteq \overline D_{p}$ by construction, we consider the preimage $f^{-\ell_{x}}(V)$ of a disk $V\in \mathcal{U}_p$ in $\overline{D}_{x}$. If $f^{-\ell_{x}}(V) \cap \overline{D}_{x}$ is nonempty, then it is a disjoint union of closed disks in  $\overline{D}_{x}.$ Let $U\subseteq \overline D_x\setminus\{x\}$ be one of the maximal closed disks in $f^{-\ell_{x}}(V)\cap \overline{D}_{x}$.
We claim that $f^{\circ \ell_x}$ must be scaling on $U$ and  $f^{\circ \ell_x}(U)=V$. Indeed, since $\mathrm{diam}(V)\le\delta\cdot\rho(p,V)$ and by condition~(4) and (5) given above in the construction of $D_{x}$, we see that $f^{\circ \ell_{x}}$ is certainly scaling on $U$ if the set $W_{x} := \{x, f(x), \ldots, f^{\circ \ell_{x}}(x)\}$ does not contain any singular point. On the other hand, if $W_{x}$ contains a singular point $z$, then since $z$ is normal by assumption and $\delta < r_{z}$, in view of  Definition \ref{def:normal}~(4) and Remark~\ref{rem:normal},  there exists a maximal scaling disk $V'\subseteq \overline D_x$ such that $V'\cap U\not=\emptyset$ and $V\subseteq f^{\circ \ell_x}(V')$,  which implies $U\subseteq V'$ and $f^{\circ \ell_{x}}$ is also scaling on $U$ in this case. If $f^{\circ \ell_{x}}(U)\subsetneq V$, then, since $f^{\circ \ell_{x}}$ is scaling on $V'$, the preimage $f^{-\ell_{x}}(V)$ within $V'$ is a scaling disk strictly larger than $U$, which contradicts the assumption that  $U$ was chosen to be maximal.   This validates the claim. 
We now set 
\begin{equation}\label{eqn:U_x}
\mathcal{U}_x:=\left\{\text{closed disks}\ U\subseteq \overline D_x\setminus\{x\}: f^{\circ \ell_x}(U)\in\mathcal{U}_p\ \text{and}\ U\cap\mathcal{J}(f)\not=\emptyset\right\}.
\end{equation}

Let 
$$\mathcal{U}(\gamma):=\bigcup_{x\in \mathrm{GO}^*(\gamma)}\mathcal{U}_x.$$ 
Then for any $U\in\mathcal{U}(\gamma)$, we have that $U\subseteq \overline{W}(\gamma)$ and $U\cap\mathcal{J}(f)\not=\emptyset$. Moreover, $\mathcal{U}(\g)$ forms a covering of $\left(\bigcup_{x\in \mathrm{GO}^*(\gamma)}\overline D_x\right)\setminus \mathrm{GO}^*(\gamma)$. Thus statements (1) and (2) hold. 

Statement~(3) follows from the construction of $\Ucal(\g)$ and the fact that $p$ is a repelling periodic point. Specifically, the only disks $U\in \Ucal(\g)$ for which $f(U)$ is possibly disjoint from $\overline{W}(\g)$ are those in  $\overline{D}_p\setminus f^{-s}(\overline D_p)$ since  $\overline{D}_p\subsetneqq f^{\circ s}(\overline D_p)$. 

Let $\ell_f$ be a fixed integer greater than $\ell_{x'} + s_{\gamma'}$ for all $x' \in \mathrm{GO}^{\ast}(\gamma')$ and all $\gamma' \in \Gamma(f)$. We choose the closed disk $\overline{D}_p$ to have sufficiently small diameter so that for all integers $0 \le i \le \ell_f$ and all $\gamma' \in \Gamma(f)$ with $\gamma' \ne \gamma$, we have
$$f^{\circ i}(\Dbar_{p})\cap \Dbar_{p_{\g'}} = \emptyset.$$ 
Let $U \in \mathcal{U}(\gamma)$ be a scaling disk such that $f(U) \cap \overline{W}(\gamma) = \emptyset$. By construction, $U \in \mathcal{U}_p$, and hence $U \subset \overline{D}p$. Suppose, for contradiction, that $f(U) \cap \overline{W}(\gamma') \ne \emptyset$ for some $\gamma' \in \Gamma(f)$. Then there exists a point $\beta \in f(U) \cap \overline{W}(\gamma')$, which implies that $\beta \in \overline{D}_{x'}$ for some $x' \in \mathrm{GO}^{\ast}(\gamma')$ and $\beta = f(y)$ for some $y \in \overline{D}_p$.
  
 It follows that 
  $$f^{\circ \ell_{x'}}(\beta)\in \Dbar_{p_{\g'}}$$ 
  and so  
  $$f^{\circ (\ell_{x'} +1)}(y) = f^{\circ \ell_{x'}}(\beta)\in f^{\circ (\ell_{x'} +1)}(\Dbar_{p})\cap \Dbar_{p_{\g'}}.$$ 
 But since $\ell_{x'} + 1 \le \ell_{x'} + s_{\gamma'} \le \ell_f$, this contradicts the choice of $\overline{D}_p$, which was constructed to avoid such intersections.
 
Therefore, we conclude that   $$f(U)\cap \Wbar(\g') = \emptyset$$ 
 for all $\g'\in \Gamma(f)$,  as desired.

To complete the proof, we must show  the collection $\mathcal{U}(\gamma)$ is countable. It is sufficient to show that $\mathcal{U}_x$ is countable for each $x\in\mathrm{GO}^*(\gamma)$. For each large  $n\ge 1$, consider the open disk $D_{x,n}:=D(x,1/n)$. Pick a sufficiently large $n_0\gg 1$ so that $\overline D_{x,n}\subsetneq\overline D_x$,  and consider the (closed) annuli $\overline A_{x,n_0}:=\overline D_x\setminus D_{x,n_0}$ and $\overline A_{x,n+1}=\overline D_{x,n}\setminus D_{x,n+1}$ for $n\ge n_0$. Then for each $n\ge n_0$, the intersection $\overline A_{x,n}\cap\mathcal{J}(f)$ is compact, and hence $\mathcal{U}_x$ contains finitely many elements which are subsets of  $\overline A_{x,n}$. Since the union of  the annuli $\overline A_{x,n}, n\ge n_0$ is $\overline D_x\setminus\{x\}$, we conclude that $\mathcal{U}_x$ is countable. 
\end{proof}
\begin{remark}\label{rem:invariance of Up}
Let $U\in \Ucal_{p}$ be such that $f^{\circ s}(U) \subset \Dbar_{p}$, then the proof of Lemma~\ref{lem:singular} also shows that  $f^{\circ s}(U) \in \Ucal_{p}.$ Indeed, since $U$ satisfies the inequality $\diam(U) \le \d \cdot \r(p, U)$ and $f^{\circ s}$ is scaling on $\Dbar_{p}$, it follows that $f^{\circ s}(U)$ also satisfies the same inequality. By the definition of $\Ucal_{p}$ there exists a scaling disk $V \in \Ucal_{p}$ such that $f^{\circ s}(U) \subseteq V.$ If the inclusion is strict, i.e., $f^{\circ s}(U) \subsetneq V$, then the same reasoning applied to the inverse $f^{-\ell_{x}}$ on $\Dbar_{p}$ implies that $U\subsetneq f^{-\ell_{x}}(V)$, which contradicts the maximality of $U.$  
\end{remark}

Throughout our discussions below, we say that a collection $\mathcal{U}$ of disjoint nontrivial disks and possibly a finite collection  of trivial disks (points) in $X$ forms a {\em covering} of $\mathcal{J}(f)$ if each disk $U\in \Ucal$ intersects $\Jcal(f)$ nontrivially, i.e., $U\cap\Jcal(f) \ne \emptyset$ and if $\Jcal(f)$ is  contained in the union of these disks:   
$$\Jcal(f)\subseteq \bigcup_{U\in \Ucal} U.$$ 
If, furthermore, $\Ucal$ consists of scaling disks for $f$, we say that $\Ucal$ is a covering of $\Jcal(f)$ by scaling disks.  Moreover, we say that $\Ucal$ satisfies the \emph{Markov property} (with respect to $f$), if for every $U\in \Ucal$, its image  $f(U)$ contains at least a disk in $\mathcal{U}.$ This covering induces a symbolic dynamical system that encodes the dynamics of $f$ in $\mathcal{J}(f)$.

By Lemma~\ref{lem:singular},  we know that the Julia set in a neighborhood of $\Sing(f)$ can be covered by countably many scaling disks. Moreover, this covering nearly fulfills the conditions required for the Markov property. Our goal is to demonstrate that, through suitable modifications, this initial covering can be refined into one that fully satisfies the Markov property. The main tool for achieving this refinement is a process, called {\em splitting}, which we define as follows.

\begin{definition}\label{def:splitdisc}
Let $Y$ be a nonempty subset of $X$. For a disk $U$ of $X$ together with a proper (possibly trivial)  subdisk 
$U_0\subsetneq U$ such that $U_{0}\cap Y \ne \emptyset$, we say that  the following collection of disks
$$\left\{U_0\right\} \cup \left\{V: V\ \text{is a  maximal disk in}\ U\backslash U_0\ \text{intersecting}\ Y\right\}$$
is the  {\em splitting} of $U$ via $U_0$ with respect to $Y$. 
\end{definition}

Let $\psi: X \to X$ be a continuous map with nonempty Julia set $\mathcal{J}(\psi)$ and let $\mathcal{U}$ be a covering of $\Jcal(f)$ by scaling disks for $f$. Assume that $\Ucal$ contains at least one nontrivial  disk. We claim that for each nontrivial disk $U\in \Ucal$, there exists a positive integer $m$ such that $\psi^{\circ m}(U)$ contains a disk in $\Ucal$. 

To prove the claim, we note that since each disk  $U \in \Ucal$ has a nontrivial intersection with $\Jcal(f)$, its image $\psi(U)$, which remains a disk under $\psi$, must intersect some $U'\in \Ucal$. 
Suppose, for the sake of contradiction, that the claim does not hold for $U$. That is, $\psi^{\circ n}(U)$ remains properly contained in a disk in $\Ucal$ for every positive integer $n$. Also note that in this case,  $U$ is a scaling disk for $\psi^{\circ n}$ for all $n$. By considering a finite forward orbit of $U$ under $\psi$ if necessary, we may assume that $U$ contains a repelling periodic point, say $p$,  of period $s$. It follows that 
$$p\in U\subsetneq\psi^{\circ s}(U).$$  
This contradicts the assumption on $U$ and hence the claim follows.

Now we define $m_\psi(U)\ge 1$ as the least positive integer satisfying the assertion of the claim for a nontrivial disk $U\in \Ucal$, that is, $\psi^{\circ m_\psi(U)}(U)$ contains at least one disk $U'\in \Ucal.$    Note that $\Ucal$ satisfies the Markov property if and only if $m_\psi(U)=1$ for all nontrivial disk $U\in \Ucal.$  Suppose that $m_\psi(U)= 1$  for all but finitely many nontrivial disks  $U\in \Ucal$.  We then introduce the index 
$$M_\psi(\mathcal{U}):=\sum_{\text{nontrivial}\ U\in\mathcal{U}}(m_\psi(U)-1).$$
In the following, we show that the index $M_\psi(\mathcal{U})$ serves as an indicator of the number of splitting steps required to refine $\Ucal$ into a covering of $\Jcal(\psi)$ by scaling disks that satisfies the  Markov property.

\begin{lemma}\label{lem:split}
Let $\psi:X\to X$ be a continuous map with nonempty Julia set $\mathcal{J}(\psi)$ and let $\mathcal{U}$  be a covering of $\Jcal(\psi)$ by scaling disks. Suppose that $m_{\psi}(U) = 1$ for all but finitely many nontrivial disk $U\in \Ucal.$ Then, there exists a refinement of $\mathcal{U}$ that satisfies the Markov property.
\end{lemma}

\begin{proof}
Suppose that there exists a nontrivial disk $U\in \Ucal$ with $m_{\psi}(U) > 1.$ For ease of notation, let $m = m_{\psi}(U).$ Observe that $U$ is a scaling disk for $\psi^{\circ (m-1)}.$ 

Let $U'\in\mathcal{U}$ be the nontrivial disk such that $\psi^{\circ (m-1)}(U)\subsetneq U'$. Then,  
splitting $U'$ via $\psi^{\circ(m-1)}(U)$ with respect to $\mathcal{J}(\psi)$ yields  a refinement of $\mathcal{U}$, denoted by $\mathcal{U}^{\ast}$. We claim  that 
$$M_\psi(\mathcal{U}^{\ast})=M_\psi(\mathcal{U})-1.$$ 

Observe that we must have that $\psi^{\circ(m-1)}(U)\in \Ucal^{\ast}$ and $m_{\psi}(U) = m-1$ in $\Ucal^{\ast}$, since 
$$\psi\left(\psi^{\circ(m-1)}(U)\right) = \psi^{\circ m}(U)$$ 
contains a disk $U^{''}$ in $\Ucal$. As both $U'$ and $U^{''}$ are disks in $\Ucal$, they are either equal or disjoint. In either case, we have that   $\psi^{\circ m}(U)$  contains a disk in $\Ucal^{\ast}.$ Indeed, if 
$U'  = U''\subset \psi^{\circ m}(U)$, then the splitting of $U'$ contains a disk in  in $\Ucal^{\ast}$; if $U' \ne U''$ then the disk $U''$  is just a disk belonging to $\Ucal^{\ast}.$

To prove our claim, it suffices to show that for every maximal disk $V\subseteq U' \setminus\psi^{\circ (m-1)}(U)$ which intersects $\Jcal(\psi)$ nontrivially, $\psi(V)$ contains a disk in $\mathcal{U}^{\ast}$.  This follows because  the difference between $\Ucal^{\ast}$ and $\Ucal$ arises precisely from the splitting of $U'$ via $\psi^{\circ(m-1)}(U)$. 

Suppose, for contradiction, that there exists such a $V$ for which the claim fails. Then $\psi(V)\subsetneq V'$ for some $V' \in \Ucal^{\ast}.$  Then we must have $V'\subseteq \psi(U')$ for otherwise $\psi^{\circ m}(U)\subset \psi(U')\subseteq V'$. It follows that $\psi^{\circ m}(U)$ is contained in the disk $V'$ in $\Ucal^{\ast}$ which is not the case.  
Noting that  $\psi$ is scaling in $U'$, we conclude that $(\psi|_{U'})^{-1}(V')$ is contained in a maximal disk in $U'\setminus\psi^{\circ (m-1)}(U)$, implying $(\psi|_{U'})^{-1}(V')\subseteq V$, and thus $V'\subseteq\phi(V)$, a contradiction. 

By repeatedly applying this splitting process, we eventually obtain a refinemen $\tilde{\Ucal}$ of $\Ucal$ in which $m_{\psi}(U)=1$ for every nontrivial disk $U\in \tilde{\Ucal}$. Equivalently, the refined covering $\tilde{\Ucal}$ satisfies the Markov property as desired.
\end{proof}

Combining Lemmas~\ref{lem:singular} and \ref{lem:split}, we are able to construct a covering of the Julia set $\Jcal(f)$ that satisfies the Markov property. 

\begin{lemma}\label{lem:markov}
The Julia set $\Jcal(f)$ admits a covering consisting of at most countably many scaling disks for $f$, and this covering can be constructed to satisfy the Markov property. 
\end{lemma} 

\begin{proof}
We first note that if  the set $\Sing(f)$ of singular points is empty, then we may start with a covering $\Ucal$ of the Julia set $\Jcal(f)$ consisting of finitely many non-trivial scaling disks for $f$ since $\Jcal(f)$  is compact. Then, by Lemma~\ref{lem:split}, after a finite steps of splittings, we arrive at a refinement of $\Ucal$ which also consists of finitely many scaling disks and it satisfies the Markov property.

In the following, we assume that $\Sing(f)$ is nonempty and write $\Gamma(f):=\{\gamma_1,\dots,\gamma_{i_0}\}$.  For each $1\le i\le i_0$, we let $\mathcal{U}(\gamma_i)$ be the collections of scaling disks contained in $W(\g_{i})$ as constructed  in Lemma \ref{lem:singular} and put  
$$\mathcal{U}:=\bigcup_{i=1}^{i_0}\left(\mathcal{U}(\gamma_i)\cup\left\{\{x\}:x\in\mathrm{GO}^*(\gamma_i)\right\}\right)\quad  \text{and} \quad \Wbar\left(\Gamma(f)\right) := \bigcup_{i=1}^{i_0} \Wbar(\g_{ i }).$$
Then $\mathcal{U}$ is a covering of $W(\Gamma(f))\cap \Jcal(f)$ by scaling disks of $f$ and $\Wbar\left(\Gamma(f)\right)$ is a neighborhood of $\Gamma(f)$ by disjoint union of closed disks in $X$. For the subset $\Jcal(f)^{\ast}:=\Jcal(f)\setminus W(\Gamma(f))$ of the Julia set outside of $W(\Gamma(f))$, there exists a covering $\mathcal{U}'$ of $\Jcal(f)^{\ast}$ by finitely many nontrivial scaling disks since  $\mathcal{J}(f)$ is compact.  Then the collection 
$$\mathcal{U}^{''}:=\mathcal{U}\cup\mathcal{U}'$$ 
forms a covering of  $\mathcal{J}(f)$ by  scaling disks for $f$. Moreover, it follows from the construction of $\Ucal(\g_{i})$ for $1\le i \le i_{0}$ (see Lemma \ref{lem:singular}~(3))  and the finiteness of $\Ucal'$ that, under the action of $f$,  all  but finitely many nontrivial disks in $\mathcal{U}^{''}$ have images containing at least a disk in $\mathcal{U}_0$. Thus by Lemma \ref{lem:split}, there exists a refinement  $\mathcal{U}_\ast$ of $\mathcal{U}^{''}$ which is covering of $\Jcal(f)$ by scaling disks for $f$  and satisfies the Markov property. Moreover, since $\mathcal{J}(f)$ is compact, we conclude that $\mathcal{U}_\ast$  contains countably many disks. 
\end{proof}

\noindent As a consequence of our construction of the covering of the Julia set, we have the following:
\begin{corollary}\label{cor:finite preimage}
For each $x\in \Ocal_{f}(\Sing(f))$, its preimage $f^{-1}(x)$ under $f$ is finite. 
\end{corollary}

\subsubsection{Homogeneous Condition and $T$-equivalence}\label{subsec:homo}
We continue our investigation of the local dynamics near the singular points of $f.$ Assuming that the set $\Sing(f)$ satisfies the Homogeneous Condition, we demonstrate  that this  condition can be extended to the forward orbit of $\mathrm{\bf Sing}(f)$ in the following sense.

\begin{lemma}\label{lem:T}
 Let $\gamma\in\Gamma(f)$ and denote by $p:=p_\gamma$ the marked point associated to $\gamma$. Then for each $x\in\mathrm{GO}^*(\gamma)$, there exist a disk $U_x$ containing $x$ and a scaling map $T_x: U_x\to X$ fixing $x$  such that the following conditions hold:
\begin{enumerate}
    \item 
     $$
     f^{\circ \ell_x}(U_x)\subseteq U_p \quad \text{and}\quad \mathrm{diam}(f^{\circ\ell_x}(U_x))=\mathrm{diam}(U_p).
     $$
     
    \item Suppose $x_1,x_2\in\mathrm{GO}^*(\gamma)$ are  distinct  and satisfy  $x_2=f^{\circ j_0}(x_1)$ for the  smallest integer $j_0\ge 1$ with  $ f^{\circ j}(x_1)\ne p$ for all $0\le j\le j_0-1$. Then: 
    
\begin{enumerate}
\item
$$
f^{\circ j_0}(U_{x_1})\subseteq U_{x_2}\quad \text{and}\quad\mathrm{diam}(f^{\circ j_0}(U_{x_1}))=\mathrm{diam}(U_{x_2}).
$$

\item The following commutativity holds on $U_{x_1}$: 
$$f^{\circ j_0}\circ T_{x_1}=T_{x_2}\circ f^{\circ j_0}.$$
\end{enumerate}
\end{enumerate}
\end{lemma}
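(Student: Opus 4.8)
The statement (Lemma \ref{lem:T}) extends the homogeneous condition (Definition \ref{def:homogeneous}), which is given only at the singular points themselves, to all points in $\mathrm{GO}^*(\gamma)$, i.e.\ to the full forward orbit of the singular points. So the natural approach is: build $U_x$ and $T_x$ at points $x$ that are *not* singular by transporting the data $(V_z, T_z)$ at an appropriate *singular* ancestor or the marked point along iterates of $f$, then check everything is consistent where two orbit segments overlap. Concretely, I would first set up the ``backbone'' disks: around the marked point $p=p_\gamma$ take $U_p$ to be (a suitable shrinking of) $V_p$ together with the normal/scaling data guaranteed by Lemma \ref{lem:singular} (the disks $\overline D_x$), and for a general $x\in\mathrm{GO}^*(\gamma)$ with $f^{\circ\ell_x}(x)=p$ let $U_x$ be the maximal disk with $f^{\circ\ell_x}(U_x)\subseteq U_p$ and $\mathrm{diam}(f^{\circ\ell_x}(U_x))=\mathrm{diam}(U_p)$; this is possible because $f^{\circ\ell_x}$ is a scaling map near $x$ off the (finitely many) singular points in $\mathrm{GO}^*(\gamma)$, and one shrinks $U_p$ once and for all so that all the finitely many relevant pullbacks avoid collisions — exactly as in the proof of Lemma \ref{lem:singular}. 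This gives part (1) by construction.

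**Defining $T_x$ and the main case split.** For $x\in\mathrm{GO}^*(\gamma)$ I would define $T_x$ by cases. If $x$ itself is singular, $T_x$ is the map $T_x$ from Definition \ref{def:homogeneous} (restricted to $U_x\subseteq V_x$, after the shrinking above). If $x$ is not singular but some forward iterate $f^{\circ j}(x)$ with $0\le j<\ell_x$ is singular, pick the *first* such $j$, call the singular point $y=f^{\circ j}(x)$, and set
$$T_x := (f^{\circ j}|_{U_x})^{-1}\circ T_y\circ f^{\circ j},$$
which makes sense because $f^{\circ j}$ is a scaling bijection on $U_x$ onto a subdisk of $U_y=U_{f^{\circ j}(x)}$ (no singular point is hit before step $j$), and $T_y$ preserves the relevant annular structure by Definition \ref{def:homogeneous}(2) extended inductively. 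If no forward iterate of $x$ up to $p$ is singular, then $f^{\circ\ell_x}$ is a scaling bijection from $U_x$ to $U_p$ and I set $T_x := (f^{\circ\ell_x}|_{U_x})^{-1}\circ T_p\circ f^{\circ\ell_x}$, where $T_p := f^{\circ\ell_{c}}$-conjugated data from the singular point feeding into $\gamma$ — more precisely $T_p$ is whatever scaling self-map of $U_p$ realizes the homogeneous condition at $p$, obtained from Definition \ref{def:homogeneous}(1) applied to a singular $c$ with $f^{\circ\ell_c}(c)=p$ (such a $c$ exists since $\gamma\in\Gamma(f)$). In each case $T_x$ is a scaling map fixing $x$ because it is a composition of scaling maps fixing the relevant points and the conjugating iterates are bijections on the disks involved.

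**Verifying (2) and the main obstacle.** For part (2), given $x_2=f^{\circ j_0}(x_1)$ with no copy of $p_\gamma$ in between, the inclusion $f^{\circ j_0}(U_{x_1})\subseteq U_{x_2}$ with equal diameters follows from part (1) applied at $x_1$ and $x_2$ together with $\ell_{x_1}=j_0+\ell_{x_2}$ and the fact that $f^{\circ\ell_{x_2}}\circ f^{\circ j_0}=f^{\circ\ell_{x_1}}$ is a scaling map on $U_{x_1}$; the conjugacy identity $f^{\circ j_0}\circ T_{x_1}=T_{x_2}\circ f^{\circ j_0}$ then reduces, after composing both sides with the appropriate iterate down to $U_p$ or to a singular point, to Definition \ref{def:homogeneous}(1)–(2), which is exactly where the hypothesis is used. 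The main obstacle — and the point requiring genuine care rather than bookkeeping — is \emph{well-definedness}: an orbit segment from $x$ to $p$ may pass through *several* singular points, and then there are competing recipes for $T_x$ (conjugate from the first singular point hit, or from a later one, or from $p$). I would resolve this by proving a compatibility lemma: if $y=f^{\circ j}(x)$ and $y'=f^{\circ j'}(x)$ are both singular with $j<j'$, then transporting $T_{y'}$ back to $y$ via $(f^{\circ(j'-j)}|_{U_y})^{-1}$ agrees with $T_y$ on $U_y\cap\mathcal{J}(f)$ — and this is precisely Definition \ref{def:homogeneous}(2) with the singular point $y$ in the role of ``$x$'' there (note $f^{\circ(j'-j)}(y)=y'\in\mathrm{Sing}(f)$, so that clause applies, giving $f^{\circ(j'-j)}\circ T_y=T_{y'}\circ f^{\circ(j'-j)}$, equivalently $T_y=(f^{\circ(j'-j)}|_{U_y})^{-1}\circ T_{y'}\circ f^{\circ(j'-j)}$ on the relevant subdisk). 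An analogous identity relating the last singular point on the segment to $p$ comes from Definition \ref{def:homogeneous}(1). Chaining these, all recipes coincide, so $T_x$ is well defined; the diameter normalization in part (1) is what guarantees the maximal disks $U_x$ are themselves canonical, removing the last ambiguity. Once well-definedness is settled, (1) and (2) are immediate from the construction, so I expect the write-up to be a careful induction along orbit segments with the two clauses of Definition \ref{def:homogeneous} as the only non-formal inputs.
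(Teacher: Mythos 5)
Your proposal is correct and follows essentially the same route as the paper: take $U_x$ to be the maximal pullback along $f^{\circ \ell_x}$ of a small disk $U_p$ around the marked point, set $T_p=f^{\circ\varphi(\gamma)}$, and transport the maps $T$ of Definition \ref{def:homogeneous} backwards along the orbit via inverse branches at regular points, with clauses (1) and (2) of that definition supplying exactly the compatibility identities you isolate (the paper organizes this as a one-step induction defining $T_x$ from $T_{f(x)}$ and verifying $f\circ T_x=T_{f(x)}\circ f$, rather than a single conjugation from the first singular descendant plus a well-definedness lemma, but the content is the same). The one imprecision is that the inverse branch in your formula $T_x=(f^{\circ j}|_{U_x})^{-1}\circ T_y\circ f^{\circ j}$ must be taken on a slightly larger disk than $U_x$, since $T_y$ is expanding and so $T_y(f^{\circ j}(U_x))$ need not lie inside $f^{\circ j}(U_x)$; this is precisely why the paper introduces the auxiliary disks $U'_x$ (the maximal disks in $f^{-\ell_x}(f^{\circ\varphi(\gamma)}(U_p))$) as the domains of its inverse branches.
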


\begin{proof}
We construct the disks $U_{x}$ and maps $T_{x}$ for each $x\in\mathrm{GO}^*(\gamma)$ inductively. We begin with the marked point $p$. Let $U_{p}$ be any disk containing $p$ such that $f^{\circ \varphi(\gamma)}$ is scaling on $U_{p}$, where $\varphi: \Gamma(f) \to \ZZ_{>0}$ is the integer-valued function defined in Definition~\ref{def:homogeneous}.  Define $T_p:U_p\to X$ by 
$$T_p:=f^{\circ \varphi(\gamma)}.$$
It is then clear  that the disk  $U_{p}$ and the map $T_{p}$ satisfy both Conditions (1) and (2) stated in the the lemma.

Now, suppose that for some $x\in\mathrm{GO}^*(\gamma)\setminus \{p\}$, we have already constructed a disk $U_{y}$ and a scaling map $T_{y}$ for $y = f(x)$ that satisfy Conditions~(1) and (2) given in the statement of the lemma.
We construct a disk $U_{x}$ containing $x$  as follows. 

\noindent {\it Case~(I)}: if $x$ is a regular point, then we let $U_x\subseteq X$ be the maximal disk contained in $f^{\circ (-1)}(U_y)$ that contains $x$ and is a scaling disk for $f$. Similarly, let $U'_x\subseteq X$ be the maximal disk contained in $f^{\circ (-1)}(T_{y}(U_y))$ that contains $x$ and is a scaling disks for $f$. 

\noindent {\it Case~(II)}: if $x$ is a singular point, we define $U_{x}$ and $U_{x}'$  as the maximal disks  contained in $f^{\circ (-1)}(U_y)\cap V_{x}$ and $f^{\circ (-1)}(T_{y}(U_y)) \cap V_{x}$,  respectively, both  containing $x$, where $V_{x}$ is the disk associated to $x$ given in  Definition~\ref{def:homogeneous}.

In either case,  we  adjust $U_{y}$, if necessary, so that $\diam(f(U_{x})) = \diam(U_{y})$.  
Next, we define $T_x:=(f\mid_{U'_x})^{-1}\circ T_{x_1}\circ f$ in Case~(I). In Case~(II), we  take $T_{x}$ to be the one already defined on $V_{x}$ as  in   Definition~\ref{def:homogeneous}. By construction, the following identity holds in $U_{x}$:
$$ f \circ T_{x} = T_{y} \circ f $$ 
To ensure that the diameter equalities described in Condition~(2)~(a) are preserved, we note that adjusting 
$U_{y}$ satisfy $\diam(f(U_{x})) = \diam(U_{y})$ requires simultaneously adjusting all disks $U_{y'}$ corresponding to points $y'$ in the truncated orbits $\Ocal_{f}(y, \ell_{y}) = \{y, f(y), \ldots, f^{\circ \ell_{y}}(y) \}.$ 

Starting with $p$ and recursively applying the above process, one can verify that the constructed disks $U_{x}$ and  maps $T_{x}$  satisfy the desired properties outlined in Condition~(1) and Condition~(2). We omit the straightforward verification here. 
\end{proof}

\begin{remark}\label{rem:T}
The proof of Lemma~\ref{lem:T} demonstrates that by appropriately adjusting the size (i.e., the diameter) of the disk $U_{x}$ for each $x\in \Sing(f)$ or $U_{p_{\g}}$ for the marked points $p_{\g}$ for $\g\in \Gamma(f)$, one can ensure that the diameters of the associated disks for all points in the forward orbit $\Ocal_{f}(\Sing(f))$ are as small as desired. 
\end{remark}

In addition to the Homogeneous Condition, assume that $\Sing(f)$ also satisfies the Normal Condition. By Lemma~\ref{lem:markov}, there exists a covering $\Ucal_{\ast}$ of $\Jcal(f)$ by scaling disks that satisfies the Markov property, which can be taken as a refinement of the coverings constructed in Lemma~\ref{lem:singular}. We will show below that the scaling maps $T_{x}$ for $x\in \Ocal_{f}(\Sing(f))$ defined in Lemma~\ref{lem:T} induce an action on all but finitely many  scaling disks in $\Ucal_{\ast}.$

Recall from the proof of Lemma~\ref{lem:singular} that for each $x\in \Ocal_{f}(\Sing(f))$,  there exists a disjoint closed disk $\Dbar_{x}$ containing $x$ that satisfies  certain natural conditions: namely, $\Dbar_{x}$ is a scaling disk for $f$ when $x$ is regular, and  a normal disk for $x$ if $x$ is singular. Moreover,  for each such $x$, a covering $\Ucal_{x}$ of $\Dbar_{x} \cap \Jcal(f)$ by scaling disks is constructed (see~\eqref{eqn:U_p} and \eqref{eqn:U_x}).

Let us retain the notation from Lemma~\ref{lem:T}, where $U_x$ denotes the disk domain of the scaling map $T_x$ for each $x \in \Ocal_{f}(\Sing(f))$. To prepare for analyzing the action of $T_{x}$ on scaling disks in $\Ucal_{\ast}$, 
we may, by Remark~\ref{rem:T}, assume that 
\begin{align}
 U_{x} & \subseteq D_{x}\quad \text{for $x\in \Ocal_{f}(\Sing(f))$, and } \label{eqn:Ux}\\
 U_{p_{\g}} & \subseteq f^{-\varphi(\g)}(\Dbar_{p_{\g}}) \quad\text{when $x = p_{\g}$ for $\g\in \Gamma(f)$}. \label{eqn:Up}
\end{align}
We adopt this assumption in the following and define the subcollection
 \begin{equation}\label{eqn:Ucalx}
 \Ucal_{x}^{\ast} = \{U \in \Ucal_{\ast} \mid U\subseteq U_{x} \}.
 \end{equation}
 Recall that the collection  $\Ucal_{x}$ of scaling disks was constructed in Lemma~\ref{lem:singular} for each $x\in \Ocal_{f}(\Sing(f))$.  The covering  $\Ucal_{\ast}$ is obtained by refining the covering $\Ucal'' = \Ucal \cup \Ucal'$ where $\Ucal$ is the union of $\Ucal_{x}$ for all $x\in \Ocal_{f}(\Sing(f))$ and $\Ucal'$ consists of finitely many scaling disks outside $\Wbar(\Gamma(f))$ as demonstrated in Lemma~\ref{lem:markov}, specifically, by splitting  finitely many scaling disks from those in $\Ucal_{p_{\gamma}}$ for $\gamma \in \Gamma(f)$ and possibly from disks located outside the neighborhood $\Wbar(\Gamma(f))$. More precisely, each $U \in \Ucal_{\ast}\setminus \Ucal''$ either satisfies $U \subset \left(\Dbar_{p_{\gamma}} \setminus f^{-s_{\g}}(\Dbar_{p_{\gamma}})\right)$ for some $\gamma \in \Gamma(f)$ of length $s_{\g}$, or $U$ is disjoint from $\Wbar(\Gamma(f))$ (see Lemma~\ref{lem:singular}~(3)).

\begin{lemma}\label{lem:T-equiv}
Let $x\in \Ocal_{f}(\Sing(f))$ and $U\in \Ucal_{x}^{\ast}.$ If  $ T_{x}(U) \subseteq U_{x}$ then $T_{x}(U) \in \Ucal_{x}^{\ast}.$ 
\end{lemma}

\begin{proof}
We first verify this property in the case  where $x$ is the  marked point  $p =  p_{\g}$  for  $\g \in \Gamma(f).$ In this case,  $T_{p} = f^{\circ \varphi(\g)}$. Since $U' = T_p(U) \subset U_p$ by assumption,  we have that $U' \subset  f^{-\varphi(\g)}(\Dbar_{p}) \subseteq  f^{-s_{\g}}(\Dbar_{p}).$ By repeatedly applying Lemma~\ref{lem:singular}~(3), we conclude that $U'$ is a member of $\mathcal{U}_{\ast}$ and thus $T_{p}(U)\in \Ucal_{p}^{\ast}$.

Now let $x \in \Ocal_f(\Sing(f))$ be such that $x \ne p_{\gamma}$ for any $\gamma \in \Gamma(f)$. In this case, as noted above, we have $\Ucal_x \subset \Ucal_{\ast}$. Hence, for $U \in \Ucal_x^{\ast}\subseteq \Ucal_{x}$, the definition of $\Ucal_x$ (see~\eqref{eqn:U_x}) implies that $U'' = f^{\circ \ell_x}(U) \in \Ucal_p$ for the  marked point $p = p_\gamma$ of the repelling cycle $\g \subseteq \Ocal_f(x)$.

Now $U' = T_x(U) \subseteq U_x$ by assumption, we have 
$$
T_{p}(U'') = T_{p}(f^{\circ \ell_{x}}(U)) = f^{\circ \ell_{x}}(T_{x}(U)) = f^{\circ \ell_{x}}(U') \subseteq  f^{\circ \ell_{x}}(U_{x}) \subseteq  U_{p}. 
$$
From the case of marked points already considered, we conclude that $f^{\circ \ell_x}(U') = T_p(U'') \in \Ucal_p^{\ast}$. Moreover, since $f^{\circ \varphi(\gamma)}(T_p(U'')) \subset f^{\circ \varphi(\gamma)}(U_{p})\subset \overline{D}_{p}$, it follows that $T_p(U'') \in \Ucal{\ast} \cap \Ucal_p$. Therefore, by the definition of $\Ucal_x$ (see~\eqref{eqn:U_x}), we conclude that $U' = T_x(U) \in \Ucal_p^{\ast}$, as desired.
\end{proof} 

Lemma~\ref{lem:T-equiv} states that the scaling maps $T_{x}$ act on the subcollection of scaling disks in $\Ucal_{x}^{\ast}$ that are contained in $T_{x}^{-1}(U_{x})$, for each $x \in \Ocal_{f}(\Sing(f))$. Note that there are only finitely many scaling disks in $\Ucal_{\ast}$ that lie entirely outside the union $\bigcup_{x \in \Ocal_{f}(\Sing(f))} T_{x}^{-1}(U_{x})$, since the complement 
$$\Jcal(f) \setminus \bigcup_{x\in \Ocal_{f}(\Sing(f))}\,T_{x}^{-1}(U_{x})$$
is compact. \\
We are now in a position to define an equivalence relation on the collection $\Ucal_{\ast}$ of scaling disks, using the scaling maps $T_{x}$ for $x \in \Ocal_{f}(\Sing(f))$, as follows. 

\begin{definition}\label{def:T}
For two (not necessary distinct) disks $U$ and $U'$ in $\mathcal{U}_\ast$, we say that $U$ and $U'$ are  \emph{$T$-equivalent} if one of the following holds:
\begin{enumerate}
\item both $U$ and $U'$ are contained in $U_x$ for some $x\in\Ocal_{f}(\Sing(f))$ and $U=(T_x)^{\circ m}(U')$ for some $m\in\mathbb{Z}$; or
\item $U=U'$  otherwise.
\end{enumerate}
\end{definition}

By Lemma~\ref{lem:T-equiv}, the relation defined in Definition~\ref{def:T} is indeed an equivalence relation on the collection of scaling disks in $\Ucal_{\ast}$.  For convenience, we will refer to an equivalence class under the $T$-equivalence relation defined above as a \emph{T-class} in what follows. 

Note that if a disk $U\in \Ucal_{\ast}$ is either disjoint from all the disks $U_x$ for $x \in \Ocal_{f}(\Sing(f))$, or if it is a trivial disk associated to some $x \in \Ocal_{f}(\Sing(f))$, then the $T$-class containing $U$ consists of $U$ alone. In other words, $U$ does not share its equivalence class with any other disk. In the following, we refer to such a $T$-class as a \emph{uniclass}. Clearly, if $\Sing(f)$ is empty, then every $T$-class in $\Ucal_{\ast}$ is a uniclass. 

It follows from the definition that if $\Ucal_{\ast}$ has a non-uniclass, then it must be a subcollection of nontrivial scaling disks in $\Ucal_{x}^{\ast}$ for some $x\in \Ocal_{f}(\Sing(f))$. Note that $\Ucal_{x}^{\ast}$ has a unique uniclass, namely the class with $\{x\}$ alone. Moreover, Lemma~\ref{lem:T-equiv} implies that each non-uniclass $\Ccal \subset \Ucal_{x}$  contains a unique member $U_{\Ccal}$, called the terminal member of $\Ccal$, such that $T_{x}(U_{\Ccal})$ is disjoint from $U_{x}.$ 

We now show that there are only finitely many $T$-classes in $\Ucal_{\ast}.$ 

\begin{lemma}\label{lem:finite T-class}
There are  finitely many $T$-classes in $\Ucal_{\ast}.$
\end{lemma}

\begin{proof}
First, observe that since only finitely many scaling disks in $\Ucal_{\ast}$ are disjoint from the union $\bigcup_{x \in \Ocal_{f}(\Sing(f))} T_{x}^{-1}(U_{x})$, it suffices to show that, for each $x \in \Ocal_{f}(\Sing(f))$, the subcollection $\Ucal_{x}^{\ast}$ of $\Ucal_{\ast}$ decomposes into finitely many $T$-classes.

For each such $x$, we have the nested inclusions 
$$T_{x}^{-(i+1)}(U_{x}) \subsetneq T_{x}^{-i}(U_{x}) \quad \text{for $i \ge 0.$} $$ 
Define the annular regions  
$$A_{i} := T_{x}^{-i}(U_{x})\setminus  T_{x}^{-(i+1)}(U_{x})\quad \text{for $i\ge 0$.} $$ 
Then we have the decomposition 
$$U_{x} = \left(\bigcup_{i\ge 0} A_{i}\right)\bigcup \{x\}$$ 
and for each $i\ge 1$, the map  $T_{x} : A_{i} \to A_{i-1}$ is a homeomorphism. 
   
Since $\mathcal{J}(f) \cap A_i$ is compact for each $i \ge 0$, it intersects only finitely many scaling disks in $\mathcal{U}_{x}^{\ast}$. Let $\mathcal{A}_i$ denote the finite collection of scaling disks contained in $A_i$ for each $i \ge 0$. It follows from Lemma~\ref{lem:T-equiv} that the scaling map $T_x$ induces a bijection $\mathcal{A}_{i} \simeq \mathcal{A}_{i-1}$ for all $i \ge 1$. Since 
$$ \Ucal_{x}^{\ast} = \left(\bigcup_{i\ge 0}\, \Acal_{i}\right) \bigcup \{x\},$$
we conclude that $\mathcal{U}_{x}^{\ast}$ is decomposed into finitely many $T$-equivalence classes. As $\Ocal_{f}(\Sing(f))$ is finite, the union $\bigcup_{x \in \Ocal_{f}(\Sing(f))} \mathcal{U}_{x}^{\ast}$ consists of only finitely many $T$-classes. The lemma now follows.
\end{proof}

By Lemma~\ref{lem:T}, for each $x \in \mathcal{O}_{f}(\Sing(f))$, the scaling map $T_x$ commutes locally with $f$, in the sense that
$$f\circ T_{x} = T_{f(x)} \circ f\quad \text{on $U_{x}$.}$$
Since the covering $\mathcal{U}_{\ast}$ satisfies the Markov property with respect to $f$, it is natural to ask whether this property can be descended  to the equivalence class $\mathcal{T}$ under the action of $f$.

This turns out to be possible, provided that the sizes of the disks $U_x$ are carefully adjusted for each $x \in \mathcal{O}_{f}(\Sing(f))$. Let $\mathcal{T}$ denote the collection  of all $T$-classes of scaling disks in $\mathcal{U}{\ast}$. We now describe how such an adjustment can be made to ensure that the induced map on $\mathcal{T}$ also respects the Markov property.

Let $x \in \mathcal{O}_{f}(\Sing(f))$ be given, and suppose that its set of preimages $f^{-1}(x)$ under $f$  is nonempty. Let $V \in \mathcal{U}_{\ast}$ be a scaling disk such that $V \cap f^{\circ (-1)}(x) \ne \emptyset$. It follows that $f(V) \cap U_x \ne \emptyset$. To ensure  the action of $f$ on $T$-equivalence classes is well-defined, we adjust the size of the disk $U_x$, if necessary, so that 
$$U_{x}\subseteq f(V).$$ 
By Corollary~\ref{cor:finite preimage}, there are only finitely many such scaling disks $V$ for which $x \in f(V)$. Therefore, we may choose $U_x$ to be  contained in the intersection of all such images $f(V)$:
$$ U_{x} \subseteq \bigcap_{V\in \Ucal_{\ast} \   x\in f(V)} \; f(V). $$ 
This refinement guarantees that the dynamics of $f$ respect the Markov structure not only on $\mathcal{U}_{\ast}$ but also on $T$-classes.

We continue to denote by $\mathcal{T}$ the collection of $T$-classes with respect to the adjusted closed disks $U_x$, for $x \in \Ocal_f(\Sing(f))$, as constructed above. The map $f$ then induces a well-defined set-valued map 
$$\mathfrak{F}:\mathcal{T}\to 2^{\mathcal{T}}$$
where $[V] \in \mathfrak{F}([U])$ if and only if there exist scaling disks $U', V' \in \mathcal{U}_\ast$ such that $U' \in [U]$, $V' \in [V]$, and 
$$V'\subseteq f(U').$$ 
Here, $[U]$ denotes the T-class containing $U.$  

\begin{remark}\label{rem:acting on T-class}
To illustrate how the map $\mathfrak{F}$ acts on the set of $T$-classes $\mathcal{T}$, let us denote by $\mathcal{C}_{T}(\mathcal{U}_{x}^{\ast})$ the $T$-classes in $\mathcal{U}_{x}^{\ast}$ for each $x \in \mathcal{O}_{f}(\Sing(f))$. It's clear that there's a unique uniclass in each $\Ccal_{T}(\Ucal_{x})$, namely, the class $[\{x\}]$ containing the singleton $\{x\}$ and therefore, $\gF([\{x\}]) = [\{f(x)\}].$ Let $\mathcal{C} \in \mathcal{C}_{T}(\mathcal{U}_{x}^{\ast})$ be a  non-uniclass. Clearly, there are scaling disks $U \in \Ucal_{x}^{\ast}$ such that $f(U)\subset U_{f(x)}$ and thus $\gF(\Ccal)$ contains at least one non-uniclass $\Ccal'\in \Ccal_{T}(\Ucal_{f(p)}).$
We claim that   $\mathcal{C}'$  is the unique non-uniclass belonging to $ \gF(\Ccal) .$

To prove the claim, let's first consider  the case where $x$ is not the marked point $p_{\gamma}$ associated with the (unique) repelling cycle $\gamma$ contained in $\mathcal{O}_{f}(x)$. Then, by Lemma~\ref{lem:T}, we have $f(U) \subset U_{f(x)}$ for each scaling disk $U\in \Ccal$ and $T_{f(x)}(f(U)) = f(T_{x}(U))$ when $U$ is not the terminal member $U_{\mathcal{C}}$ of $\mathcal{C}$. 
Moreover, for the terminal member $U_{\mathcal{C}}$ we have 
$$f(U_{\Ccal}) = U_{\Ccal'} \; \text{where}\; \Ccal' =[f(U)], $$
and consequently  $\gF(\Ccal) = \{\Ccal'\}$.  

Now consider the case where $x$ is the marked point $p = p_{\gamma}$. In this case, we must have 
$$f(U_{p}) \supsetneq U_{f(p)};$$ 
otherwise, we would obtain $f^{\circ s}(U_{p}) \subseteq U_{p}$, where $s$ is the length of $\g$, contradicting the assumption that $p$ is a repelling periodic point. 
\\
Suppose  there exists a member $U \in \mathcal{C}$ such that $f(U)$ is disjoint from $U_{f(p)}$. We claim that any scaling disk $U' \in \mathcal{U}_{\ast}$ with $U' \subset f(U)$ belongs to a uniclass. To prove this,  we consider two cases: 
\smallskip\\
{\bf Case (i)}  $f(U) \notin \mathcal{U}(\gamma)$, where $\mathcal{U}(\gamma) := \bigcup_{x \in \mathrm{GO}^{\ast}(\gamma)} \mathcal{U}_{x}$. \\
By Lemma~\ref{lem:singular}~(3),  $f(U)$ is disjoint from $\overline{\mathcal{W}}(\gamma')$ for all $\gamma' \in \Gamma(f)$.  
\\
{\bf Case (ii)} $f(U) \in \mathcal{U}(\gamma)$.\\
In this case,  we must have $f(U) \subset \overline{D}_{f(p)} \setminus U_{f(p)}.$
\smallskip \\
Now, the claim follows immediately from the definition of a $T$-class in both cases.

Let $V\in \Ccal$ such that $f(V) \subset U_{f(p)}.$ If $p$ is a fixed point for $f$, i.e. $f(p)=p$, then by Remark~\ref{rem:invariance of Up} we have that $f(V)\in \Ucal_{p}$. Furthermore, by the choice~\eqref{eqn:Up} of $U_{p}$, we conclude that in fact 
$f(V)\in \Ucal_{\ast}$ and thus in $\Ucal_{p}^{\ast}$. This shows that $\Ccal' = [f(V)].$ 

Next, let's assume that $s > 1$ and suppose  that there exist two scaling disks $W_{1}, W_{2}$ in $\Ucal_{f(p)}^{\ast}$ such that $W_{1}, W_{2} \subset f(V).$  Since  $f^{\circ (s-1)}(U_{f(p)}) = U_{p}$ by the construction of $U_{x}$ (see Lemma~\ref{lem:T}),  we have that  $f^{\circ s}(V)\subset U_{p} \subset \Dbar_{p}.$ It follows that $f^{\circ s}(V)$ must simultaneously contain at least two elements in $\Ucal_{p}^{\ast}$ which is impossible since $f^{\circ s}(V)\in \Ucal_{p}^{\ast}$ by Remark~\ref{rem:invariance of Up}. In conclusion, we see that $f(V)$ belongs to $\Ucal_{f(p)}^{\ast}$ and hence $\Ccal' = [f(V)]$ is the unique non-uniclass in $\gF(\Ccal)$ as claimed. 
\end{remark} 

In what follows, we fix the choice of $U_x$ for each $x \in \mathcal{O}_{f}(\Sing(f))$ to satisfy conditions in Lemma~\ref{lem:T} and the following property is satisfied: if $U \in \mathcal{U}_{p_{\gamma}}^{\ast}$ and $f(U)$ is disjoint from $U_{f(p_{\gamma})}$, then for any disk $U' \subseteq f(U)$ with $U' \in \mathcal{U}_{\ast}$, the $T$-class $[U']$ is a uniclass.

In the following subsections, we use the covering $\mathcal{U}_{\ast}$ of $\Jcal(f)$ and the $T$-classes $\mathcal{T}$ to encode the dynamical system $(\Jcal(f), f)$ in terms of symbolic dynamics-both as a system with (at most) countably many states and as a subshift of finite type.

\subsection{Symbolic dynamical systems}\label{sec:splitting}
In this subsection, we continue to assume that $\mathcal{J}(f)$ is compact. 
 Inspired by \cite{Fan21}, using the covering $\Ucal_{\ast}$ in previous subsection, we construct a symbolic dynamical system of at most countably many states and establish a topological conjugacy result under the assumption that $\mathrm{\bf Sing}(f)$ is normal.

\begin{proposition}\label{prop:countable}
Suppose that each point of $\Sing(f)$ is normal if it is nonempty.  Then, there exist  a symbolic dynamical system $(\Sigma_A,\sigma)$ of  at most countably many states corresponding to the disks in $\mathcal{U}_\ast$, and a bijection $h:\mathcal{J}(f)\to\Sigma_A$ such that $h$ is a topological conjugation in $\mathcal{J}(f)\setminus \mathrm{GO}\left(\Sing(f) \right)$ between $f$ and $\sigma.$ 
\end{proposition}

\begin{remark}\label{rem:hyperbolic}
In the case where $f$ is hyperbolic, since the Julia set $\Jcal(f)$ is compact and $\Sing(f)$ is empty,  Lemma~\ref{lem:markov} actually shows that the covering  $\mathcal{U}_\ast$ of $\Jcal(f)$ consists of finitely many scaling disks for $f$. Thus, the symbolic dynamics  $(\Sigma_A,\sigma)$  is a subshift of finite type. 
\end{remark}

\begin{proof}[Proof of Proposition \ref{prop:countable}]

Since every point in $\Sing(f)$ is normal when $\Sing(f)$ is nonempty, Lemma~\ref{lem:markov} implies that the Julia set $\Jcal(f)$ admits a covering $\Ucal_{\ast}$ satisfying the Markov property. We fix such a covering, which also refines the ones constructed in Lemma~\ref{lem:singular}. 
 
We regard elements in $\mathcal{U}_\ast$ as symbols. To avoid confusion, we let $\Scal_{A}$ be a set of symbols corresponding to elements in $\Ucal_{\ast}$ and let  $U_{\a}\in \Ucal_{\ast}$ be the scaling disk corresponding to  $\a\in \Scal_{A}$. Let $A$ be the adjacency matrix induced from the action of $f$ on $\mathcal{U}_\ast$.  Namely, we have $A_{ij} = 1$ if and only if $f(U_{i}) \supseteq U_{j}$ for $i, j \in \Scal_{A}.$  We let  $(\Sigma_A,\sigma)$ denote the  symbolic dynamical system induced by the adjacency matrix $A$ and define the map 
$$h:\mathcal{J}(f)\to\Sigma_A$$
by sending $x\in\mathcal{J}(f)$ to $h(x) = (\alpha_0,\alpha_1,\alpha_2,\dots)\in\Sigma_A$, where $\alpha_i$ corresponds to  the scaling disk in $\mathcal{U}_\ast$ containing $f^{\circ i}(x)$ for each $i\ge 0$. 

Note that if $\Sing(f)$ is empty, then by Remark~\ref{rem:hyperbolic}, the symbol set $\Scal_{A}$ is finite. In this case, the adjacency matrix $A$ is a finite square matrix, and the symbolic dynamical system $(\Sigma_A, \sigma)$ forms a subshift of finite type. In the following, we assume that the singular set $\Sing(f)$ is nonempty, and observe that there are finitely many elements $\alpha \in \Scal_{A}$ correspond to points in $\Ocal_{f}(\Sing(f))$, called a {\em singleton}, which will play a special role in the analysis below.

We first show that $h$ is surjective. Let $\boldsymbol{\alpha}:=(\alpha_0,\alpha_1,\dots)\in \Sigma_A$ be given. Suppose that there is some $j\ge 0$ such that $\alpha_j $  is a singleton. 
Let $j_{0}$ be the least of such  integers. If $j_{0} = 0$ then it's clear that $h(x) = \boldsymbol{\a}$ for $x\in \Ocal(\Sing(f))$  corresponding to $\a_{0}.$ If $j_{0} \ge 1$, then the scaling property of $f$ in the disks corresponding to $\alpha_i$ for  $0\le i\le j_{0}-1$ would imply that there exists $x\in\mathcal{J}(f)$ such that $h(x)=\boldsymbol{\alpha}$.  

We now assume that each $\alpha_i$ corresponding to a nontrivial disk  $U_i\in\mathcal{U}_\ast,$ where we denote  $U_{i} = U_{\a_{i}}$ for $i\ge 0$.  For each  $i\ge 1,$ define 
$$\Jcal_{i} := f^{-i} (U_{i}) \cap \Jcal(f).$$ 
 Note that $f(U_{i})\supseteq U_{i+1}$ for all $i\ge 0$. This implies that the sequence $\{\Jcal_{i}\}_{i\ge 1}$ forms a nested decreasing sequence of nonempty subsets of $\Jcal(f).$ Since the Julia set $\Jcal(f)$ is compact, the intersection 
 $$\bigcap_{i\ge 1} \, \Jcal_{i}$$ 
 is a nonempty  subset of $\Jcal(f).$ Then, we have that $h(x) = \boldsymbol{\a}$ for any $x\in \cap_{i\ge 1} \, \Jcal_{i}$ as desired.
%

To prove that $h$ is injective, suppose that $h(x)=h(y) =\boldsymbol{\a}=(\a_{0}, \a_{1}, \ldots )$ for some  $x, y\in\mathcal{J}(f)$. If both $x$ and $y$ lie in $ \GO(\Sing(f))$, then there exists a $j_{0} \ge 0$ such that $\a_{j}$ is a singleton for all $j\ge j_{0}$.  Arguing as in the previous paragraph for the case where $\a_{j}$  is a singleton for some $j$, one can deduce that $x = y$.  

Now suppose that each disk $U_{i}$ corresponding to $\a_{i}$ is a nontrivial scaling disk, and assume for contradiction  that $x\ne y$. Let  $\overline D$  be the smallest closed disk in $X$ containing both $x$ and $y$  so that $\diam(\overline{D}) = \rho(x, y)$. Since both $x$ and $y$ are in $\Jcal(f)$,  we have $\overline D\cap\mathcal{J}(f)\not=\emptyset$. By considering a suitable forward orbit of $\overline D$ under $f$, we may assume without loss of generality that $\overline D$ contains a repelling periodic point of period $s\ge 1$. Given the assumption  $h(x)=h(y)$, it follows  that $f^{\circ i}(\overline D) \subseteq U_{i}\in \Ucal_{\ast}$ for all $i\ge 0$. Therefore, $f^{\circ i}$ is scaling on $\overline{D}$ and in particular, we have 
 \[ \overline D\subsetneq f^{\circ s}(\overline D), \quad \text{with} \; \; \diam(f^{\circ (\ell s)}(\overline D)) = \lambda^{\ell} \diam(\overline D) 
 \]
for some constant $\l > 1$ and all $\ell \ge 0.$   
It follows that  there exists some $i_0\ge 1$ such that $f^{\circ(i_0s)}(\overline D)$ properly contains the disk $U_{0}$ (which already contains $\overline D$). That is, 
\[
  \overline{D} \subseteq U_{0} \subsetneq f^{\circ(i_0s)}(\overline D) \subseteq U_{i_{0} s} 
\]
This  contradicts the fact that $U_{0}$ and $U_{i_{0} s}$ are disjoint scaling disks in $\Ucal_{\ast}$. Therefore, our assumption $x\ne y$ must be false and we conclude that $x=y$. Hence, the map $h$ is injective.

Observe that $h\circ f=\sigma\circ h$ in $\mathcal{J}(f)$ by the construction of $h$. Indeed, for $x\in\mathcal{J}(f)$, let $h(x)=(\alpha_0,\alpha_1,\alpha_2,\dots)\in\Sigma_A$, then 
$$h\circ f(x)=(\alpha_1,\alpha_2,\dots)=\sigma\circ h(x).$$

Thus, to complete the proof, it remains to show that $h$  is continuous in $\mathcal{J}(f)\setminus\mathrm{GO}\left(\Sing(f)\right)$ and $h^{-1}$ is  continuous in $\Sigma_A\setminus h\left(\GO(\Sing(f))\right)$. Let $x\in \mathcal{J}(f)\setminus\GO(\Sing(f))$ and write $h(x)=(\alpha_0,\alpha_1,\dots,\alpha_{i_0}\dots)$. Note that an element $x$ is in $\mathcal{J}(f)\setminus\GO(\Sing(f))$ if and only if  the disks $U_{i}$ corresponding to the symbols $\a_{i}$ are  non-trivial scaling disks  for all $i\ge 0.$ For a cylinder set containing $h(x)$ of the following form
$$\mathcal{C}:=\{\boldsymbol{\beta}=(\beta_0,\beta_1,\dots):\beta_i=\alpha_i: 0\le i\le i_0\}\subseteq\Sigma_A,$$
 we have that 
$$h^{-1}(\mathcal{C})=\left(\bigcap_{i=0}^{i_0}f^{-i}(U_{\a_{i}})\right)\bigcap \Jcal(f),$$
Thus $h^{-1}(\mathcal{C})$ is open in $\Jcal(f)$ and hence $h$ is continuous. Conversely, for an open set $\Omega\subseteq\mathcal{J}(f)\setminus\GO(\Sing(f)$, shrinking $\Omega$ if necessary, we can assume that $\Omega =\Jcal(f) \cap U $ for some nontrivial disk $U$ in $\mathcal{U}_\ast$. Let $\ell\ge 1$ be the smallest integer such that $f^{\circ \ell}(U)$ contains at least two disks in $\mathcal{U}_\ast$, and let $U_i\in\mathcal{U}_\ast$ be the disk containing $f^{\circ i}(\Omega)$ for  $0\le i\le \ell-1$. 
Let  $u_i\in \Scal_{\ast}$ be the symbol corresponding to $U_i$ for each $0\le i\le \ell-1$,   
it follows from he definition of $h$ that 
$$h(\Omega)=\{\boldsymbol{\omega}:=(\omega_0,\omega_1,\dots):\omega_i=u_i\ \text{for}\ 0\le i\le\ell-1 \}$$
which is a cylinder set in $\Sigma_{A}$. Hence $h(\Omega)$ is an open set in $\Sigma_A$.  It follows that $h^{-1}$ is continuous as desired.  
\end{proof}

\subsection{Factorization and finite symbols}\label{sec:factoring}

In this subsection, we assume that the map $f$ meets the hypotheses of Theorem~\ref{thm:zeta}. Hence, the singular set $\Sing(f)$ satisfies both the Normal Condition and the Homogeneous Condition if it is nonempty. 
Under this assumption,  applying the Homogeneous Condition, we refine the symbolic dynamical system constructed in Proposition~\ref{prop:countable} to obtain a symbolic dynamical system that is a subshift of finite type.

\begin{proposition}\label{prop:finite}
 There exist a symbolic dynamical system $(\Sigma_B,\sigma)$ which is a subshift of finite type  and an injection $g:\mathcal{J}(f)\to\Sigma_B$ such that the following hold:
\begin{enumerate}
\item $g$ is a topological conjugation on $\mathcal{J}(f)\setminus \GO(\Sing(f))$ between $f$ and $\sigma$.
\item The complement $\Sigma_B\setminus g(\mathcal{J}(f))$ consists of the grand orbits of finitely many periodic cycles in $(\Sigma_B,\sigma)$. 
\end{enumerate}
\end{proposition}

To prove Proposition~\ref{prop:finite}, we proceed through a sequence of lemmas presented below. The argument is organized into three main parts. In \S\ref{sec:equivalence}, we define a factor map $\pi$ from the symbolic space $\Sigma_A$ (defined in Proposition~\ref{prop:countable}) to $\Sigma_B$. 
 In \S\ref{sec:truncation}, we introduce the notion of $\g$-shadows to describe the structure of elements in both $\Sigma_B$ and the image $\pi(\Sigma_A)$. Finally, in \S\ref{sec:pr-finite}, we bring these tools together to complete the proof of Proposition~\ref{prop:finite}.

Throughout the argument, we assume that for each $x \in \Ocal_{f}(\Sing(f))$, the closed disk $U_x$ is chosen as in \S\ref{subsec:homo}. This ensures that the $T$-equivalence relation is well-defined on the covering $\Ucal_{\ast}$, and that the map $\mathfrak{F}$ on $\Tcal$ is valid.

\subsubsection{Factor map}\label{sec:equivalence} 

Let $\Scal_{B}$ denote the set of symbols corresponding to elements in $\Tcal.$ Then, the map $\mathfrak{F}$ induces a natural adjacency matrix $B$ on $\Scal_{B}.$ Namely,  for $i, j \in \Scal_{B}$, the $(i,j)$-entry $B_{ij} $ of $B$ is $1$ if $\Ccal_{j} \in \gF(\Ccal_{i})$ and is $0$ otherwise ,  where $\Ccal_{i}$ denotes the $T$-class corresponding to the symbol $i\in \Scal_{B}$,  Note that $B$ is a finite square matrix since $\Tcal$ is finite by Lemma~\ref{lem:finite T-class}. 

Let $(\Sigma_{B}, \s)$ be the symbolic dynamical system induced by the adjacency matrix $B$. Then, $(\Sigma_{B}, \s)$ is a subshift of finite type. We will show that the dynamical system $(\Jcal(f), f)$ can be encoded in 
$(\Sigma_{B}, \s)$.  Let us first make the following definition. 

\begin{definition}
For two symbolic dynamical systems $(\Sigma_{A_1},\sigma)$ and $(\Sigma_{A_2},\sigma)$ over symbol sets $\mathcal{A}_1$ and $\mathcal{A}_2$, we say a map $\pi:\Sigma_{A_1}\to\Sigma_{A_2}$ is a \emph{factor map} if the following hold:
\begin{enumerate}
\item $\pi$ is continuous,
\item there exists a surjection $\theta: \mathcal{A}_1\to\mathcal{A}_2$ such that $\pi((\alpha_1,\alpha_2,\dots))=(\theta(\alpha_1),\theta(\alpha_2),\dots)$ for any $(\alpha_1,\alpha_2,\dots)\in\Sigma_{A_1}$. 
\end{enumerate}
\end{definition}

Observe that any factor map $\pi:\Sigma_{A_1}\to\Sigma_{A_2}$ satisfies that $\sigma\circ\pi=\pi\circ\sigma$ since for any $(\alpha_1,\alpha_2,\dots)\in\Sigma_{A_1}$, 
$$\sigma\circ\pi((\alpha_1,\alpha_2,\dots))=(\theta(\alpha_2),\dots)=\pi\circ\sigma((\alpha_1,\alpha_2,\dots)).$$

\noindent We now introduce a factor map from the symbol space $\Sigma_A$ to the symbol space $\Sigma_B$. Define the map $\theta: \Scal_{A} \to \Scal_{B}$ by sending $\a \in \Scal_{A}$ to the symbol corresponding to the $T$-class $[U_{\a}] \in \mathcal{T}$. By construction, $\theta$ is a surjective map. Using this, we define  
$$\pi: \Sigma_A\to\Sigma_B,$$
by setting 
$$\pi\left((\alpha_0,\alpha_1,\dots)\right) := (\theta(\alpha_0),\theta(\alpha_1),\dots)$$
for any $(\alpha_0,\alpha_1,\dots) \in \Sigma_{A}.$ 

\begin{lemma}\label{lem:finite}
The map $\pi:\Sigma_A\to\Sigma_B$ is a factor map. 
\end{lemma}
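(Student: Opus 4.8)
The statement to prove is that $\pi\colon\Sigma_A\to\Sigma_B$ is a factor map, so by the definition recalled just before the lemma I need to check exactly two things: that $\pi$ is well-defined and continuous as a map of symbolic spaces, and that it is induced coordinate-wise by a surjection on the underlying alphabets. The second point is essentially immediate from the construction: the map $\theta\colon\mathcal{U}_\ast\to\mathcal{T}$ sending $U$ to its $T$-equivalence class $[U]$ is surjective by definition of $\mathcal{T}$, and $\pi$ was defined precisely by $\pi((\alpha_0,\alpha_1,\dots))=(\theta(\alpha_0),\theta(\alpha_1),\dots)$. So the real content is (a) showing that $\pi$ actually lands in $\Sigma_B$ — i.e. that if $(\alpha_0,\alpha_1,\dots)$ is an admissible sequence for the transition matrix $A$ of $\mathcal{U}_\ast$, then $(\theta(\alpha_0),\theta(\alpha_1),\dots)$ is admissible for $B$ — and (b) continuity.

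For (a): the admissibility of $(\alpha_0,\alpha_1,\dots)$ in $\Sigma_A$ means that for each $i$, the disk $U_{i+1}\in\mathcal{U}_\ast$ corresponding to $\alpha_{i+1}$ satisfies $U_{i+1}\subseteq f(U_i)$ (this is how the transition matrix $A$ was defined from the Markov property of $\mathcal{U}_\ast$ in Proposition \ref{prop:countable}). But then by the definition of $\mathfrak{F}$ and its transition matrix $B$, the entry of $B$ from $[U_i]$ to $[U_{i+1}]$ is $1$ because we have exhibited representatives $U'=U_i$ and $V'=U_{i+1}$ with $V'\subseteq f(U')$. Hence $(\theta(\alpha_0),\theta(\alpha_1),\dots)=([U_0],[U_1],\dots)$ is $B$-admissible, so $\pi(\Sigma_A)\subseteq\Sigma_B$.

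For (b), continuity: both $\Sigma_A$ and $\Sigma_B$ carry the product topology, generated by cylinder sets. Given a basic cylinder $\mathcal{C}=\{(\beta_0,\beta_1,\dots)\in\Sigma_B:\beta_i=[V_i]\text{ for }0\le i\le n\}$, its preimage $\pi^{-1}(\mathcal{C})$ is the set of admissible $(\alpha_0,\dots)$ with $\theta(\alpha_i)\in\{[V_i]\}$, i.e. $\alpha_i$ lies in the (finite, since $\mathcal{T}$ is finite but more to the point each class is described combinatorially) preimage $\theta^{-1}([V_i])$ for $0\le i\le n$; this is a finite union of cylinder sets of $\Sigma_A$, hence open. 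More simply, $\pi$ is a sliding block code of memory and anticipation zero induced by the single-symbol map $\theta$, and any such map between shift spaces is automatically continuous — I would just invoke that, or spell out the one-line cylinder argument. Finally, as remarked in the text right after the definition of factor map, continuity together with the coordinate-wise form automatically gives $\sigma\circ\pi=\pi\circ\sigma$, so nothing further is needed. I do not anticipate a genuine obstacle here — the lemma is a bookkeeping check that the constructions of \S\ref{sec:equivalence} fit together — the only mild care needed is to make the admissibility bookkeeping in step (a) match the exact conventions ($\subseteq$ vs.\ $\subsetneq$, and the "contains a disk in $\mathcal{U}_\ast$" phrasing) used when $A$ and $B$ were defined.
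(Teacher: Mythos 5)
Your proposal is correct and matches the paper's argument in substance: the coordinate-wise/surjectivity condition is immediate from the construction of $\theta$, and continuity is the standard one-block-code verification (the paper phrases it via convergent sequences rather than cylinder sets, which is equivalent). One small slip: $\theta^{-1}([V_i])$ need not be finite, since a $T$-equivalence class can contain infinitely many disks $(T_x)^{\circ m}(U)$; this is harmless because an arbitrary (countable) union of cylinders is still open.
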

\begin{proof}
By the definition of $\pi$, it suffices to show that $\pi$ is continuous. Let $\boldsymbol{\a} = (\a_0, \a_1, \dots) \in \Sigma_A$ be given, and let $(\boldsymbol{\a}^{(m)})_{m \ge 1} := \left((\a^{(m)}_0, \a^{(m)}_1, \dots)\right)_{m \ge 1}$ be a sequence in $\Sigma_A$ such that $\boldsymbol{\a}^{(m)} \to \boldsymbol{\a}$ as $m \to \infty$. This means that for each $j \ge 0$, there exists an integer $m_j \ge 1$ such that $\a_i^{(m)} = \a_i$ for all $0 \le i \le j$ and all $m \ge m_j$.

Since $\theta(\a_i^{(m)}) = \theta(\a_i)$ for all $0 \le i \le j$ and $m \ge m_j$, it follows that $\pi(\boldsymbol{\a}^{(m)}) \to \pi(\boldsymbol{\a})$ as $m \to \infty$. Therefore, $\pi$ is continuous, as claimed.
\end{proof}

\subsubsection{$\g$-Shadow}\label{sec:truncation}
We now study the elements in the image $\pi(\Sigma_A)$ of the factor map $\pi$. To better understand the structure of $\pi(\Sigma_A) \subseteq \Sigma_B$, we consider a (finite or infinite) admissible sequence  $(\beta_1, \beta_2, \dots, \beta_r)\in \Scal_{B}^{r}, \; r\in \NN\cup\{\infty\}$ with respect to the adjacency matrix $B$. 

We are concerned with whether or not for such an admissible sequence  $(\beta_1, \beta_2, \dots, \beta_r)$ in $\Scal_B^{r}$, there exists a corresponding admissible sequence  $(\alpha_1, \alpha_2, \dots, \alpha_r) \in \Scal_A^{r}$ such that 
$$(\beta_{1},\beta_{2},\dots,\beta_{r}) = (\theta(\a_{1}), \theta(\a_{2}),\dots,\theta({\a_{r}})).$$ 


To address this question, we introduce the notion of {\em $\g$-shadow} as follows. For each $x\in\Ocal_{f}(\Sing(f))$, recall that $U_x\subseteq\overline{D}_x$ denotes the domain of the map $T_x$ and let $\Ucal_{x}^{\ast}$ be the sub-collection of scaling disks in $U_{x}$ as defined in \eqref{eqn:Ucalx}.  For the purposes of our subsequent discussion, we define the collection of scaling disks associated to a repelling cycle $\gamma \in \Gamma(f)$ as 
$$
\Ucal_{\g}^{\ast} := \bigcup_{x\in \GO^{\ast}(\g)}\, \Ucal_{x}^{\ast}. 
$$
\begin{definition}\label{def:gamma shadow} 
Let  $\boldsymbol{\beta}=(\beta_{0},\beta_{1},\dots)\in\Sigma_B$  with each  $\beta_i$ corresponds to the $T$-class $\Ccal_{i} \in\Tcal$. We say that a subsequence $(\beta_{j},\beta_{j+1},\dots,\beta_{j+s})$ of $\boldsymbol{\beta}$, where $j\ge 0$ and $0\le s\le+\infty$, is  a \emph{$\g$-shadow} of $\boldsymbol{\beta}$ if there exists a repelling cycle $\g\in\Gamma(f)$ such that the following conditions hold:
\begin{enumerate}
\item for each  $j\le i\le j+s$,  $\Ccal_{{i}}\in \Ccal_{T}(\Ucal_{x_i}^{\ast})$ for some $x_i\in\mathrm{GO}^*(\gamma)$, 
\item if $j\ge 1$, then $\Ccal_{{j-1}}\notin \Ccal_{T}(\Ucal_{\g}^{\ast}) :=\cup_{x\in \GO^{\ast}(\g)} \Ccal_{T}(\Ucal_{x}^{\ast})$, and  
\item if $s\not=+\infty$, then $\Ccal_{{j+s+1}}\notin \Ccal_{T}(\Ucal_{\g}^{\ast})$ 
\end{enumerate}
Moreover,  we say that the sequence  $(\beta_{j},\beta_{j+1},\dots,\beta_{j+s})$ is an \emph{infinite $\g$-shadow} if $s=+\infty$,  and  a \emph{finite $\g$-shadow}, otherwise. 
\end{definition}

Note that if a subsequence   $(\beta_{j},\beta_{j+1},\dots,\beta_{j+s})$ of $\boldsymbol{\beta}$ is a finite $\g$-shadow, then by Remark~\ref{rem:acting on T-class}, the symbol $\beta_{j+s+1}$ corresponds to a uniclass of $\Tcal$. Moreover, if $j\ge 1$, then for the same reason, $\beta_{j-1}$ also corresponds to a uniclass in $\mathcal{T}$.

\begin{lemma}\label{lem:finite-T}
Let $(\beta_{j},\beta_{j+1},\dots,\beta_{j+s})$ be a finite $\g$-shadow of $\boldsymbol{\beta}=(\beta_{0},\beta_{1},\dots)\in\Sigma_B$. Then, there is a unique admissible sequence $(\a_{j},\a_{j+1},\dots,\a_{j+s})$ in $\Scal_{A}^{s+1}$ 
 such that 
 $$(\beta_{j},\beta_{j+1},\dots,\beta_{j+s}) = (\theta(\a_{j}), \theta(\a_{j+1}),\dots, \theta(\a_{j+s})).$$ 
\end{lemma}
\begin{proof}

By definition, each symbol $\beta_{i}$ corresponds to a $T$-class $\mathcal{C}_{i} \in \mathcal{C}_{T}(\mathcal{U}_{x_{i}}^{\ast})$ for some $x_{i} \in \mathrm{GO}^{\ast}(\gamma)$, where $j \le i \le j+s$ and $x_{i+1} = f(x_i)$ for $j \le i \le j+s-1$. We claim that there exists a unique scaling disk $U_i \in \mathcal{C}_i$ such that
 $$U_{i+1} \subset f(U_{i})\quad   \text{for each $j \le i \le j+s.$ } $$

We begin with the case where $i = j + s$. By the remark above, the symbol $\beta_{j+s+1}$ corresponds to a uniclass $[U_{j+s+1}]$ consisting of a single scaling disk $U_{j+s+1} \in \mathcal{U}_{\ast}$.  Then, by definition of admissibility of the sequence $\boldsymbol{\beta}$ with respect to $B$, we have $[U_{j+s+1}] \in \mathfrak{F}(\mathcal{C}_{j+s})$. Therefore, there exists some $U \in \mathcal{C}_{j+s}$ such that $U_{j+s+1} \subset f(U)$. 

According to Remark~\ref{rem:acting on T-class}, we conclude that $\mathcal{C}_{j+s} \in \mathcal{C}_{T}(\mathcal{U}_{p}^{\ast})$ and that $U = U_{{\mathcal{C}_{j+s}}}$ is the terminal member of $\mathcal{C}_{j+s}$, where $p = p_{\gamma}$ is the marked point for the repelling cycle $\gamma$. This proves the claim for the case where $i=j+s.$

Now, consider the case where $i < j+s$ and assume that $\Ccal_{\ell} = [U_{\ell}]$ with $U_{\ell+1} \subset f(U_{\ell})$ for $i+1\le \ell \le j+s.$ Since both $x_{i}$ and $x_{i+1}= f(x_{i})$ are in $\GO^{\ast}(\g)$, as discussed in Remark~\ref{rem:acting on T-class}, $\Ccal_{i+1} = [U_{i+1}]$ is the unique non-uniclass in $\gF(\Ccal_{i})$ and $U_{i+1} = f(U_{i})$ for a unique scaling disk $U_{i} \in \Ccal_{i}$.  Inductively, we can conclude that for each $j\le i\le j+s$, we have unique $U_{i} \in \Ucal_{x_{i}}$ satisfying  $\Ccal_{i} = [U_{i}]$ and $U_{i+1} \subset f(U_{i})$. 

\end{proof}

\begin{corollary}\label{coro:finite}
Let  $\boldsymbol{\beta}\in\Sigma_B$ such that it has no infinite $\g$-shadow. Then $\boldsymbol{\beta}\in\pi(\Sigma_A)$.
\end{corollary}
\begin{proof}
Let $\boldsymbol{\beta} = (\beta_{0}, \beta_{1}, \ldots)$ be an element of $\Sigma_{B}$ that does not contain an infinite $\gamma$-shadow. If $\beta_{i}$ corresponds to a uniclass $[U_{i}]$ for some $U_{i} \in \mathcal{U}_{\ast}$, then clearly $\beta_{i} = \theta(\alpha_{i})$, where $\alpha_{i} \in \mathcal{S}_{A}$ is the symbol corresponding to $U_{i}$.

Now, suppose that $(\beta_{j}, \ldots, \beta_{j+s})$ is a finite $\gamma$-shadow of $\boldsymbol{\beta}$. By Lemma~\ref{lem:finite-T}, there exists a block $(\alpha_{j}, \ldots, \alpha_{j+s+1}) \in \mathcal{S}_{A}^{s+2}$ such that 
$$(\beta_{j}, \ldots, \beta_{j+s+1}) = (\th(\a_{j}), \ldots, \th(\a_{j+s+1}))$$
It remains to show that, when $j \ge 1$, the extended block $(\beta{j-1}, \ldots, \beta_{j+s+1})$ is also in the image of the factor map $\pi$.

Note that in this case, $\beta_{j-1}$ also corresponds to a uniclass $\mathcal{C}_{j-1} = [U_{j-1}]$. By definition of the map $\mathfrak{F}$, we must have $\mathcal{C}_j \in \mathfrak{F}(\mathcal{C}_{j-1})$. Therefore, there exists a scaling disk $U_{j}' \in \mathcal{C}_j$ such that $U_{j}' \subset f(U_{j-1})$

Then, by the uniqueness part of Lemma~\ref{lem:finite-T}, we conclude that $U_{j}' = U_{j}$. Hence, the block $(\alpha_{j-1}, \ldots, \alpha_{j+s+1})$ can be lifted to $\Scal_A^{s+3}$ under $\pi$.

In conclusion, we have shown that all such finite blocks containing a $\g$-shadow, and hence the entire sequence $\boldsymbol{\beta}$, lie in the image of the factor map $\pi$.
\end{proof}

We now  consider the elements in $\Sigma_B$ having infinite $\g$-shadows.  Recall the integer valued function $\varphi: \Gamma(f) \to \ZZ$ from Definition \ref{def:homogeneous}.

\begin{lemma}\label{lem:in}
Let $\boldsymbol{\beta} = (\beta_{0}, \beta_{1}, \ldots)\in\Sigma_B$ be a sequence with infinite $\g$-shadow for some $\g\in \Gamma(f)$. Then, $\boldsymbol{\beta}$ is eventually mapped, under $\sigma$, to a periodic cycle in $\Sigma_B$ whose length is a factor of $\varphi(\gamma)$. 

\end{lemma}
\begin{proof}

Let $\boldsymbol{\beta}\in \Sigma_{B}$ be a given sequence with infinite $\g$-shadow. By definition,  there exists an integer $j_{0}\ge 0$ such that  for all $i \ge j_{0},$ each $\beta_{i}$ corresponds to a $T$-class $\Ccal_{i} \in \Ccal_{T}(\Ucal_{\g}^{\ast}).$

If for some $i\ge j_{0}$, $\beta_{i}$ corresponds to a uniclass in $\Ucal_{x_{i}}^{\ast}$ (where $x_{i} \in  \GO^{\ast}(\g)$),  then this uniclass  must be the unique class containing the singleton  $\{x_{i}\}$ in $\Ucal_{x_{i}}^{\ast}$. In this case, for all $i' \ge i$, the entry  $\beta_{i'}$ must correspond to the uniclass $[\{x_{i'}\}]$, where $x_{i'} \in \Ocal_{f}(x_{i})$.  Consequently, the sequence  $\boldsymbol{\beta}$ is mapped to a periodic cycle with entries consist of uniclasses $[\{x\}]$ with $x\in \g$, whose length  $s_{\g}$ divides $\varphi(\g).$

Suppose that for all sufficiently large $i$, each entry  $\beta_{i}$  of $\boldsymbol{\beta}$ corresponds to a non-uniclass $\Ccal_{i}\in \Ccal_{T}(\Ucal_{\g}^{\ast}).$  Then, there exists  a $j_{0}\ge 0$ such that  $\Ccal_{j_{0}}\in \Ccal_{T}(\Ucal_{p}^{\ast}).$ Since $\Ccal_{j_{0}+1}$ is also a non-uniclass, by  Remark~\ref{rem:acting on T-class}, it must be the unique non-uniclass in $\gF(\Ccal_{j_{0}})$.   

As a result, the subsequence $\boldsymbol{\beta'}= (\beta_{j_{0}}, \beta_{j_{0}+1}, \ldots )$ must satisfy $\beta_{i + \varphi(\g)} = \beta_{i}$ for all $i\ge j_{0}.$ Thus, $\boldsymbol{\beta'}$ forms a cycle in $\Sigma_{B}$ whose length divides $\varphi(\g)$,  completing the proof. 
\end{proof}

\begin{corollary}\label{coro:infinite}
There are only finitely many periodic elements in $\Sigma_B$ having infinite $\g$-shadows.
\end{corollary}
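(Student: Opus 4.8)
The plan is to show that a periodic element $[\alpha]\in\Sigma_B$ with an infinite $T$-truncation is completely determined by a small amount of finite data, so that only finitely many such elements exist. First I would invoke Lemma~\ref{lem:in}: if $[\alpha]=([\alpha_0],[\alpha_1],\dots)$ is periodic and has an infinite $T$-truncation, then there is a repelling cycle $\gamma=\gamma_{[\alpha]}\in\Gamma(f)$ such that $U_i\subseteq\bigcup_{x\in\gamma}U_x$ for all sufficiently large $i$, and moreover $[\alpha]$ eventually maps under $\sigma$ into a periodic cycle of length exactly $\varphi(\gamma)$. Since $[\alpha]$ is itself periodic, combining with the structure of $\Sigma_B$ this forces $[\alpha_i]=[\alpha_{i+\varphi(\gamma)}]$ for \emph{all} $i\ge 0$, not merely for large $i$; in other words $[\alpha]$ is purely periodic with period dividing $\varphi(\gamma)$.

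Next I would bound the number of possibilities. There are only finitely many repelling cycles $\gamma\in\Gamma(f)$, since $\mathrm{Sing}(f)$ is finite; for each such $\gamma$, the period $\varphi(\gamma)$ is a fixed positive integer. A purely periodic element of $\Sigma_B$ of period dividing $\varphi(\gamma)$ is determined by the finite block $([\alpha_0],\dots,[\alpha_{\varphi(\gamma)-1}])$, a word of length $\varphi(\gamma)$ over the \emph{finite} alphabet $\mathcal{T}$ of $T$-equivalence classes. Hence for each $\gamma$ there are at most $(\#\mathcal{T})^{\varphi(\gamma)}$ such periodic elements, and summing over the finitely many $\gamma\in\Gamma(f)$ gives a finite total. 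This yields the corollary.

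The main obstacle is the transition from ``eventually'' to ``purely'' periodic: one must verify carefully that a periodic point of a subshift whose forward orbit enters a cycle of length $\varphi(\gamma)$ is in fact itself contained in that cycle. This follows from the general fact that in any subshift the $\sigma$-orbit of a periodic point is a single periodic cycle, so if $\sigma^{\circ k}([\alpha])$ lies on a cycle of length $\varphi(\gamma)$ then $[\alpha]$ lies on the same cycle and its period divides $\varphi(\gamma)$; the only thing to check is that Lemma~\ref{lem:in} indeed places $\sigma^{\circ k}([\alpha])$ on such a cycle, which is its final assertion. Once this is in hand, the counting is immediate and the finiteness of $\mathcal{T}$ (established in \S\ref{sec:equivalence}, since $\mathrm{Sing}(f)$ is finite and each $U_x$ contributes finitely many classes) together with the finiteness of $\Gamma(f)$ closes the argument.
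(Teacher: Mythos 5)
Your argument is correct and is essentially the paper's own proof, just written out in more detail: both rely on Lemma~\ref{lem:in} to place any such periodic element on one of the finitely many cycles of length $\varphi(\gamma)$, $\gamma\in\Gamma(f)$, and on the finiteness of $\mathcal{T}$ to bound the number of cycles of each length. The extra care you take in passing from ``eventually periodic of period $\varphi(\gamma)$'' to ``lies on that cycle'' is a worthwhile point that the paper leaves implicit.
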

\begin{proof}
Since $\mathcal{T}$ is finite, given an integer $s\ge 1$, there are only finitely many periodic cycles in $\Sigma_B$ of length $s$. Thus the conclusion follows from Lemma \ref{lem:in}. 
\end{proof}

We now establish a correspondence between the points in $\Sigma_A$ whose images  under the factor map $\pi:\Sigma_A\to\Sigma_B$  contain an infinite  $\g$-shadow, and   the points in $\mathcal{J}(f)$ that are eventually mapped to a repelling cycle  in $\Gamma(f)$. 
\begin{lemma}\label{lem:infinite-T}
Let $\boldsymbol{\alpha}\in\Sigma_A$ such that $\pi(\boldsymbol{\alpha})$ contains an infinite $\g$-shadow. Then 
$$h^{-1}(\boldsymbol{\alpha})\in\mathrm{GO}(\Sing(f))$$
where $h:\mathcal{J}(f)\to\Sigma_A$ is the map defined in Proposition~\ref{prop:countable}.  
\end{lemma}
\begin{proof}
Let $\boldsymbol{\alpha}= (\a_{0}, \a_{1}, \ldots )$ be a given sequence in $\Sigma_{A}$ such that 
$\pi(\boldsymbol{\alpha})$ contains an infinite $\g$-shadow. Let $(\th(\a_{j_{0}}), \th(\a_{j_{0}+1}), \ldots)$ be an infinite   $\g$-shadow of $\pi(\boldsymbol{\alpha}).$ It suffices to show that there exists a $j\ge j_{0}$ such that $\a_{j}$ corresponds to a singleton $\{x\}\in \Ucal_{x}$ for some $x\in \g.$ Equivalently, $\th(\a_{j})$ corresponds to the uniclass $[\{x\}]$ in $\Ccal_{T}(\Ucal_{x})$

For the purpose of contradiction, suppose that for all $j\ge j_{0}$, each symbol $\th(\a_{j})$ represents a non-uniclass.  Then, there exists some $j \ge j_{0}$ such that $\th(\a_{j})$ corresponds to a non-uniclass $\Ccal_{j} \in \Ccal_{T}(\Ucal_{p}).$ By Lemma~\ref{lem:in}, we have  
$$\th(\a_{j}) = \th(\a_{j+\ell \varphi(\g)})\quad \text{for all integer $\ell\ge 0.$}
$$
Let $U_{j}\in \Ucal_{p}^{\ast}$ be the scaling disk associated with $\a_{j}$. Then, $\a_{j+\ell \varphi(\g)}$ is the symbol corresponds to the scaling disk $f^{\circ (\ell \varphi(\g))}(U_{j})$, and $\th(\a_{j+\ell \varphi(\g)})$ corresponds to the $T$-class $[f^{\circ (\ell \varphi(\g))}(U_{j})]=\Ccal_{j}.$ 

However, we can choose $\ell$  sufficiently large such that $f^{\circ (\ell \varphi(\g))}(U_{j})$ is disjoint from $U_{p}$. By Lemma~\ref{lem:T-equiv} (see also Remark~\ref{rem:acting on T-class}), the $T$-class $[f^{\circ (\ell \varphi(\g))}(U_{j})]$ associated with $\th(\a_{j+\ell \varphi(\g)}) $ is a uniclass, contradicting initial assumption that  all  $\th(\a_{i})$ were non-uniclass for $i\ge j_{0}$,  thereby completing the proof.   
\end{proof}

\subsubsection{Proof of Proposition \ref{prop:finite}}\label{sec:pr-finite}
To establish Proposition \ref{prop:finite}, we first show the injectivity of the factor map $\pi:\Sigma_A\to\Sigma_B$. 

\begin{lemma}\label{lem:factor-inj}
The factor map $\pi:\Sigma_A\to\Sigma_B$ is injective. 
\end{lemma}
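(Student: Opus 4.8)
The goal is to show that the factor map $\pi:\Sigma_A\to\Sigma_B$ is injective. Suppose $\pi(\alpha)=\pi(\beta)$ with $\alpha=(\alpha_0,\alpha_1,\dots)$ and $\beta=(\beta_0,\beta_1,\dots)$ in $\Sigma_A$, so that $[\alpha_i]=[\beta_i]$ in $\mathcal{T}$ for every $i\ge 0$. Writing $[\alpha_i]=[U_i]$ and $[\beta_i]=[U_i']$ with $U_i, U_i'\in\mathcal{U}_\ast$, the two disks $U_i$ and $U_i'$ are $T$-equivalent, hence either $U_i=U_i'$ (case (2) of Definition \ref{def:T}) or $U_i,U_i'\subseteq U_{x_i}$ for some $x_i\in\cup_{\gamma\in\Gamma}\mathrm{GO}^*(\gamma)$ with $U_i=(T_{x_i})^{\circ m_i}(U_i')$ for some $m_i\in\mathbb{Z}$. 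The plan is to split into two cases according to whether $\pi(\alpha)$ has an infinite $T$-truncation, and then upgrade the equality of $T$-classes, position by position, to equality of the underlying disks $U_i=U_i'$, which forces $\alpha=\beta$ since $h$ in Proposition \ref{prop:countable} encodes each point by the sequence of disks from $\mathcal{U}_\ast$ containing its orbit.

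\textbf{Case 1: $\pi(\alpha)$ has no infinite $T$-truncation.} Then by Lemma \ref{lem:finite-T}, within each (necessarily finite) $T$-truncation block $([\alpha_{j_0}],\dots,[\alpha_{j_0+s_0}])$ the disk $U_i\in\mathcal{U}_\ast$ realizing $[\alpha_i]$ is \emph{unique} for each $j_0\le i\le j_0+s_0$; and outside all $T$-truncation blocks (case (2) of Definition \ref{def:T}) the $T$-class is a singleton, so $U_i$ is trivially unique there too. Since $\pi(\beta)=\pi(\alpha)$ has exactly the same $T$-truncation structure, the same uniqueness applies to the $U_i'$, and we conclude $U_i=U_i'$ for all $i\ge 0$. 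Hence $h^{-1}(\alpha)$ and $h^{-1}(\beta)$ lie in the same disk of $\mathcal{U}_\ast$ at every iterate, so $h^{-1}(\alpha)=h^{-1}(\beta)$ by the injectivity of $h$ (Proposition \ref{prop:countable}), giving $\alpha=\beta$.

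\textbf{Case 2: $\pi(\alpha)$ has an infinite $T$-truncation.} By Lemma \ref{lem:infinite-T}, $h^{-1}(\alpha)\in\cup_{\gamma\in\Gamma(f)}\mathrm{GO}(\gamma)$, and likewise $h^{-1}(\beta)\in\cup_{\gamma\in\Gamma(f)}\mathrm{GO}(\gamma)$; moreover by Lemma \ref{lem:in} the common image $\pi(\alpha)=\pi(\beta)$ determines a single repelling cycle $\gamma=\gamma_{[\alpha]}\in\Gamma(f)$ such that $U_i, U_i'\subseteq\cup_{x\in\gamma}U_x$ for all large $i$. For a finite initial segment $0\le i<m_0$ the analysis of Case 1 applies (those positions either lie in finite $T$-truncation blocks, handled by Lemma \ref{lem:finite-T}, or in singleton classes), forcing $U_i=U_i'$ there; the remaining work is the "tail", where both orbits eventually land in $\cup_{x\in\gamma}U_x$. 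Here one uses that on $\cup_{x\in\gamma}U_x$ the map $T_x$ coincides with $f^{\circ\varphi(\gamma)}$ (Lemma \ref{lem:T}), so $T$-equivalence of $U_i$ and $U_i'$ within this region means $f^{\circ i}(h^{-1}(\alpha))$ and $f^{\circ i}(h^{-1}(\beta))$ lie on the same $\langle f^{\circ\varphi(\gamma)}\rangle$-orbit inside a small disk around a point of $\gamma$; but both points are, by Lemma \ref{lem:infinite-T}, in $\mathrm{GO}(\gamma)$, and tracking backward along the (injective on these scaling disks) dynamics one sees the only way the $T$-classes can agree at \emph{every} position simultaneously is $U_i=U_i'$ for all $i$, again yielding $h^{-1}(\alpha)=h^{-1}(\beta)$.

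\textbf{Main obstacle.} The delicate point is Case 2: a priori $T$-equivalence allows $U_i=(T_{x_i})^{\circ m_i}(U_i')$ with $m_i\ne 0$, i.e. the two symbol sequences could differ by "shifts" within the homogeneous annuli around $\gamma$; one must rule out a consistent nonzero choice of the $m_i$ across all positions. The key is that the shift amount is not free: the relation $f^{\circ j_0}\circ T_{x_1}=T_{x_2}\circ f^{\circ j_0}$ from Lemma \ref{lem:T}(2) forces the $T_x$-conjugacy to be compatible along the orbit, so a nonzero shift at one position propagates coherently; combined with the fact that $h^{-1}(\alpha),h^{-1}(\beta)$ are honest points of $\mathrm{GO}(\gamma)$ (not ideal boundary points), and that the orbit eventually hits the marked point $p_\gamma$, where $U_p\subseteq\overline{D}_p$ pins down the disk uniquely, the only globally consistent shift is the trivial one. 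I expect the write-up to hinge on making this propagation-and-rigidity argument precise, most cleanly by showing directly that $h^{-1}(\alpha)=h^{-1}(\beta)$: both are preimages under the scaling dynamics of the same marked periodic point along the same itinerary, hence equal.
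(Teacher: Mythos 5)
Your proposal is correct and follows essentially the same route as the paper: both arguments split on whether the image sequence has an infinite $T$-truncation, using Definition \ref{def:T}\,(2) and Lemma \ref{lem:finite-T} to get a unique representative disk $U_i$ for each class in the finite/no-truncation case, and Lemma \ref{lem:infinite-T} (preimages lie in $\cup_{\gamma}\mathrm{GO}(\gamma)$) in the infinite case. In fact your Case 2 and the ``main obstacle'' discussion supply more detail than the paper, whose treatment of the infinite-truncation case is a one-sentence assertion of uniqueness.
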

\begin{proof}
Let $\boldsymbol{\beta} = (\th(\a_{0}), \th(\a_{1}), \ldots) \in \pi(\Sigma_{A})$ be given. Note that if a symbol $\beta \in \Scal_{B}$ corresponds to  a  uniclass $[U]$, then clearly $\beta = \th(\a)$, where $\a$ is the (unique) symbol corresponding to $U.$ Therefore, we only need to consider the case where $\th(\a_{i})$ is not a uniclass -- that is, $\boldsymbol{\beta}$ has a $\g$-shadow for some $\g\in \Gamma(f).$ 
 
If $\boldsymbol{\beta}$ has an infinite $\g$-shadow $(\beta_{j_{0}}, \beta_{j_{0}+1}, \ldots)$, then the proof of   Lemma~\ref{lem:infinite-T} actually shows that each symbol  $\beta_{j}$ corresponds to the uniclass $[\{x_{j}\}]$ where $x_{j} \in \GO^{\ast}(\g)$ for all $j\ge j_{0}.$ 

If $\boldsymbol{\beta}$ instead has a finite $\g$-shadow  $(\beta_{j_{0}}, \ldots, \beta_{j_{0}+s})$ for some $s\ge 1,$ then by Lemma~\ref{lem:finite-T}, there exists a unique admissible sequence $(\a_{j_{0}}, \ldots, \a_{j_{0}+s})\in \Scal_{A}^{s+1}$ such that 
$$(\beta_{j_{0}}, \ldots, \beta_{j_{0}+s}) = (\th(\a_{j_{0}}), \ldots, \th(\a_{j_{0}+s}).$$ 
In conclusion, we have shown in all cases that there exists a unique sequenc  $\boldsymbol{\a} = (\a_{0}, \a_{1}, \ldots)$ in $\Sigma_{A}$ such that $\boldsymbol{\beta} =\pi(\boldsymbol{\a}).$ 
\end{proof}

\noindent Now we are ready to  prove Proposition~\ref{prop:finite}. 
\begin{proof}[Proof of Proposition \ref{prop:finite}]
Let $h$ be as in Proposition  \ref{prop:countable}  and consider the factor map $\pi:\Sigma_A\to\Sigma_B$ as in Lemma \ref{lem:finite}. Define $g=\pi\circ h$. Since the two maps $\pi$ and $h$ are injective by Proposition  \ref{prop:countable} and Lemma \ref{lem:factor-inj}, we conclude  that $g$ is injective. 

Statement (1) follows immediately from Proposition  \ref{prop:countable}, Lemmas \ref{lem:finite} and \ref{lem:factor-inj}. Statement (2) is a consequence of  Corollary~\ref{coro:finite}. Indeed, since every element of  $\Sigma_B\setminus g(\mathcal{J}(f))$ has an infinite $\g$-shadow by  Corollary~\ref{coro:finite}, it follows from Lemmas~\ref{lem:in} and \ref{lem:infinite-T}  that $\Sigma_B\setminus g(\mathcal{J}(f))$ is nonempty and that each such element is eventually mapped to a periodic cycle. Moreover, Corollary~\ref{coro:infinite} shows that there are only finitely many such periodic cycles. This completes the proof of Statement (2).
\end{proof}

\subsection{Proof of Theorem \ref{thm:zeta}}\label{sec:pr}
 Proposition \ref{prop:finite} implies  that for each $n\ge 1$, the set $\mathrm{Fix}^*(f^{\circ n})$ is finite, since $\sigma^{\circ n}$ has finitely many fixed points in $\Sigma_B$. Moreover, 
 it follows from Proposition \ref{prop:finite} (1) and Lemmas \ref{lem:in} and \ref{lem:infinite-T} that  the image $g(\gamma)$ of a repelling cycle $\gamma$ in $\mathcal{J}(f)$ of length $s\ge 1$ is a periodic cycle of $\sigma$ in $\Sigma_B$ of length $s$. 
 
 Let $\boldsymbol{\beta}_1,\dots,\boldsymbol{\beta}_{m_0}$ be all the periodic cycles in $\Sigma_B\setminus g(\mathcal{J}(f))$, and let $\ell_m$ be the length of $\boldsymbol{\beta}_m$ for each $1\le m\le m_0$. It follows from Corollaries \ref{coro:finite} and \ref{coro:infinite} that $m_0$ is finite. We conclude that 
 $$\zeta_f(X,t)= \frac{\zeta_\sigma(\Sigma_B,t)}{\exp\left(\sum_{m=1}^{m_0}\left(\sum_{j=1}^\infty\left(\ell_m\frac{t^{(j \ell_m)}}{j\,\ell_m}\right)\right)\right)},$$
 where
 $\zeta_\sigma(\Sigma_B,t)$ 
 is the Artin-Mazur zeta function associated with  $(\Sigma_B,\sigma)$. In view of \S\ref{sec:AM},  
 $$\zeta_\sigma(\Sigma_B,t)=\det(I-tB)^{-1}.$$
 Noting that
 $$\exp\left(\sum_{m=1}^{m_0}\left(\sum_{j=1}^\infty\left(\ell_m\frac{t^{(j\ell_m)}}{j\,\ell_m}\right)\right)\right)=\prod_{m=1}^{m_0}\frac{1}{1-t^{\ell_m}},$$
 we conclude that
$$\zeta_f(X,t)=\frac{\prod_{m=1}^{m_0}(1-t^{\ell_m})}{\det(I-tB)}.$$
Observe that both $\prod_{m=1}^{m_0}(1-t^{\ell_m})$ and $\det(I-tB)$ are monic polynomials with integer coefficients. Define 
$$G(t):=\gcd(\prod_{m=1}^{m_0}(1-t^{\ell_m}),\det(I-tB) )$$ 
which is a monic polynomial whose zeros  are roots of unity.
It follows that both 
$$\frac{\prod_{m=1}^{m_0}\,(1-t^{\ell_m})}{G(t)}\quad  \text{and} \quad \frac{\det(I-tB)}{G(t)}$$ 
are monic and relatively prime  polynomials  in $\mathbb{Z}[t]$. Therefore $\zeta_f(X,t)$ is of desired form. \qed 

 \section{Rational maps case}\label{sec:rationalmap}
 
This section is devoted to proving Theorems \ref{ratzeta} and \ref{main}.  We establish Theorem \ref{ratzeta}  in \S\ref{sec:proof-rational} and Theorem \ref{main}  in \S\ref{sec:entropy}, respectively. 
 
 \subsection{Proof of Theorem \ref{ratzeta}}\label{sec:proof-rational} 
 Before we proceed the proof, let us recall the settings from  Theorem~\ref{ratzeta} where $K$ is a complete non-archimedean field of characteristic $0$ and $\phi(z) \in K(z)$ is a subhyperbolic rational map with nonempty, compact $K$-rational Julia set $\Jcal(\phi, K)$ in $\PP^{1}(K).$ The set $\PP^{1}(K)$ of $K$-rational points of the projective line is naturally equipped with a non-archimedean metric induced by the absolute value of $K$ and $\phi$ is continuous self-map on $\PP^{1}(K).$  In view of \S\ref{sec:map} and Theorem \ref{thm:zeta},  to establish Theorem \ref{ratzeta}, it suffices to show that the critical points of $\phi$ satisfies the Normal Condition  and $\phi$ satisfies the Homogeneous Condition.

  The normality of critical points follows immediately from \cite[Theorem 9.46]{Baker10} and \cite[Theorem D]{Faber13}. It remains to show that  $\phi$ satisfies the  Homogeneous Condition. We  regard $\phi\in K(z)$ as a rational map in $\Kbar(z)$, where $\Kbar$ is  an algebraic closure of $K$, and denote by $m_x(\phi)$ the local degree of $\phi\in \Kbar(z)$ for any $x\in \mathbb{P}^1(\Kbar)$. Recall that $\Gamma(\phi)$ is the set of repelling cycles in $\mathcal{J}(\phi,K)$, each of which has  grand orbit  containing a critical point in $\mathrm{Crit}(\phi, K)$. 
 
 \begin{lemma}\label{lem:rational-homo}
Any subhyperbolic rational map $\phi\in K(z)$ of degree at least $2$ satisfies  the Homogeneous Condition. 
 \end{lemma}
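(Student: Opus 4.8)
The plan is to verify the two clauses of Definition~\ref{def:homogeneous} directly from the local behaviour of a rational map near a critical point that is eventually periodic, using the description of $\phi$ near repelling cycles in non-archimedean dynamics. First I would fix a repelling cycle $\gamma\in\Gamma(\phi)$ of length $s$ with marked point $p=p_\gamma$. Since $p$ is a repelling periodic point of $\phi^{\circ s}$, on a sufficiently small disk $U_p$ around $p$ the map $\phi^{\circ s}$ is a scaling map with multiplier of absolute value $>1$; moreover $\phi^{\circ s}$ restricted to $U_p$ is conjugate to its linear part, so I can pick local coordinates in which $\phi^{\circ s}(z)=\mu z$ near $p=0$ with $|\mu|>1$. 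The candidate for $\varphi(\gamma)$ will be a multiple $ns$ of $s$ chosen large enough to absorb ramification along the orbit (precisely, so that for each critical point $c$ with $\gamma_c=\gamma$, the total local degree of $\phi^{\circ(ns)}$ picked up around the cycle is compatible across all such $c$ — I expect $\varphi(\gamma)=s\cdot\mathrm{lcm}$ of the relevant local degrees, or simply $s$ times the product). The map $T_z$ for $z\in\mathrm{Sing}(\phi)$ is then forced: for $z=c$ a critical point with $\phi^{\circ\ell_c}(c)=p$, I set $T_c:=(\phi^{\circ\ell_c}|_{V_c})^{-1}\circ\phi^{\circ(\varphi(\gamma)+\ell_c)}$ on a small disk $V_c$, which is exactly clause (1) of Definition~\ref{def:homogeneous} once I check $T_c$ is a well-defined scaling map fixing $c$.

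The key local computation is the following. Near $p$, iterating $\phi^{\circ s}$ gives $\phi^{\circ(\varphi(\gamma))}(z)=\mu^{n}z$ in the linearizing coordinate, which is visibly a scaling map fixing $p$; this handles $z=p$. For a critical point $c$ with $\phi^{\circ\ell_c}(c)=p$, the map $\phi^{\circ\ell_c}$ near $c$ looks, in suitable coordinates, like $w\mapsto w^{e}$ where $e=m_c(\phi^{\circ\ell_c})$ is the local degree — but over the non-archimedean field the relevant statement is about the induced map on the space of disks/annuli: $\phi^{\circ\ell_c}$ sends the annulus of radii $(r,R)$ around $c$ to the annulus of radii $(r^{e},R^{e})$ (up to a scaling constant) around $p$. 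The composite $T_c=(\phi^{\circ\ell_c})^{-1}\circ\phi^{\circ\varphi(\gamma)}\circ\phi^{\circ\ell_c}$ therefore acts on radii around $c$ by $r\mapsto (|\mu|^{n}r^{e})^{1/e}=|\mu|^{n/e}r$, so it is again a scaling map fixing $c$ provided $e\mid n$ — which is why $\varphi(\gamma)$ must be chosen divisible by enough copies of the local degrees. That $T_c$ genuinely descends from a map on disks to a map on points (i.e. is single-valued) is where I would invoke that $\phi^{\circ\ell_c}$ restricted to a sufficiently small punctured disk $V_c\setminus\{c\}$ is, after the branched normalization, a bijection onto its image intersected with the relevant annulus — this uses that $c$ is the only critical point of $\phi^{\circ\ell_c}$ in $V_c$ for $V_c$ small (normality) and the tree-theoretic description of $\phi$ near a type-I point of its Julia set from \cite{Baker10, Faber13, Benedetto19}.

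For clause (2) I would argue by the functoriality of this construction along the orbit: if $y=\phi^{\circ j}(x)\in\mathrm{Sing}(\phi)$ then $\ell_y=\ell_x-j$ (for $j\le\ell_x$, and the cycle is the same), and both $T_x$ and $T_y$ are defined via pulling back $\phi^{\circ\varphi(\gamma)}$ along $\phi^{\circ\ell_\bullet}$ to $p$; since $\phi^{\circ\ell_x}=\phi^{\circ\ell_y}\circ\phi^{\circ j}$, the identity $\phi^{\circ j}\circ T_x=T_y\circ\phi^{\circ j}$ follows by chasing definitions, after shrinking $V_x$ so that $\phi^{\circ j}(V_x)\subseteq V_y$ and $\phi^{\circ j}$ is appropriately scaling on the relevant subdisks. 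The one subtlety here is when $x$ is singular but some intermediate iterate $\phi^{\circ i}(x)$, $0<i<j$, is also singular, or is a regular point at which $\phi^{\circ(\ell_x-i)}$ fails to be scaling on the whole disk — these are handled exactly as in the inductive argument in the proof of Lemma~\ref{lem:T}, splitting the orbit at intermediate singular points and shrinking $U_p$ finitely many times.

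The main obstacle I anticipate is not the clause-(2) bookkeeping but pinning down $\varphi(\gamma)$ and proving $T_c$ is a well-defined \emph{scaling} map rather than merely a map that scales radii of disks: over a non-archimedean field a branched cover $w\mapsto w^{e}$ is not injective on points even though it is "injective on disks of a given radius," so I need the ramification of $\phi^{\circ\ell_c}$ at $c$ to be exactly cancelled by the construction. Concretely, $T_c$ should equal a genuine scaling self-map of $V_c$, and for that the pullback $(\phi^{\circ\ell_c}|_{V_c})^{-1}$ must be taken along the correct branch consistently; I expect this to work precisely because $\varphi(\gamma)$ is a multiple of $s$ making $\phi^{\circ\varphi(\gamma)}$ fix $p$ with all of its local structure, and because $\phi^{\circ\ell_c}$ maps $V_c$ \emph{onto} a full disk around $p$ with the branching concentrated at $c$ — so that $T_c$ is, branch by branch, an honest scaling. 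Making this last point rigorous will require carefully invoking the local normal forms for rational maps at type-I Julia points from \cite{Baker10} and \cite{Faber13}, together with the normality hypothesis, and is the heart of the lemma.
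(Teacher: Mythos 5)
Your skeleton matches the paper's: you choose $\varphi(\gamma)$ to be $s_\gamma$ times a product of local degrees along the grand orbit, you define $T_c$ so that $\phi^{\circ\ell_c}\circ T_c=\phi^{\circ(\varphi(\gamma)+\ell_c)}$, and you verify clause (2) of Definition~\ref{def:homogeneous} by functoriality along the orbit. But the step you yourself flag as ``the heart of the lemma'' --- that $T_c=(\phi^{\circ\ell_c}|_{V_c})^{-1}\circ\phi^{\circ(\varphi(\gamma)+\ell_c)}$ is a well-defined, single-valued scaling map despite $\phi^{\circ\ell_c}$ being $m$-to-$1$ near $c$ --- is exactly the step you do not carry out, and the route you sketch for it would not go through as stated. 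You propose to invoke a local normal form $w\mapsto w^{e}$ and argue on radii of disks; but $K$ is not algebraically closed (so $e$-th roots needed to put $\phi^{\circ\ell_c}$ in that form need not exist in $K$), and in residue characteristic dividing $e$ the radius map of a wild critical point is not $r\mapsto Cr^{e}$ at all, so the computation $r\mapsto(|\mu|^{n}r^{e})^{1/e}$ is not available in general. More fundamentally, knowing that the composite scales radii of disks does not produce a map on points: the branch-selection problem is precisely what has to be solved, and ``taking the inverse along the correct branch consistently'' is an assertion, not an argument.

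The paper resolves this without any normal form, by solving the functional equation at the level of formal power series over $K$: writing $\phi^{\circ\ell_c}(z)=\sum_{i\ge0}b_i(z-c)^{m+i}$ and $\phi^{\circ\delta_\gamma}(z)=\lambda^{\delta_\gamma/s_\gamma}z+\cdots$ near $p=0$, one seeks $T_c(z)=c+\sum_{i\ge1}\alpha_i(z-c)^{i}$ with $\phi^{\circ\ell_c}\circ T_c=\phi^{\circ(\delta_\gamma+\ell_c)}$. Because $m=m_c(\phi^{\circ\ell_c})$ divides $\delta_\gamma/s_\gamma$, the required leading coefficient $\alpha_1=\lambda^{\delta_\gamma/(s_\gamma m)}$ is an \emph{integer} power of $\lambda$, hence lies in $K$ --- no root extraction is needed --- and the higher coefficients are then determined uniquely and linearly by the recursion $b_0\alpha_1^{m-1}\alpha_{i+1}=P(\ldots)$ with $b_0\alpha_1^{m-1}\neq0$. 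This simultaneously produces $T_c$, shows it is a scaling map after shrinking $U_c$, and (via uniqueness of the solution with prescribed $\alpha_1$) gives the commutation $\phi^{\circ j_0}\circ T_{c_1}=T_{c_2}\circ\phi^{\circ j_0}$ in clause (2), where your ``chasing definitions'' would again hit the same branch ambiguity; the paper instead solves $\phi^{\circ j_0}\circ f=T_{c_2}\circ\phi^{\circ j_0}$ for a power series $f$ (using $d\cdot m_2=m_1$ to get its leading coefficient in $K$) and identifies $f=T_{c_1}$ by uniqueness. Until you supply an argument of this kind --- or a genuinely rigorous substitute for the branch selection --- the central claim of the lemma remains unproven in your write-up.
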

 \begin{proof}
 For any $\gamma\in\Gamma(\phi)$ with a marked point $p_\gamma$, set 
 $$\delta_{\gamma}:=s_\gamma\cdot\prod_{c\in\mathrm{GO}(\gamma)\cap\mathrm{Crit}(\phi, K)}m_c(\phi),$$
 where $s_\gamma$ is the length of $\gamma$. 
 We define the map
 $$
\begin{array}{cccc}
\varphi:&\Gamma(\phi)&\to&\mathbb{Z}_{>0}\\
&\gamma&\to&\delta_\gamma.
\end{array}
$$

Pick $c\in\mathrm{GO}(\gamma)\cap\mathrm{Crit}(\phi, K)$ and let $\ell_c\ge 1$ be the smallest integer such that $\phi^{\circ\ell_c}(c)=p_\gamma$.  Conjugating $\phi$ by a M\"obius transformation over $K$, we can assume that $c\ne \infty$ (i.e. $c\in K$) and $p_{\g}=0$. 
Then in a small neighborhood $U_{p_{\g}}\subseteq K$ of $p_{\g}$, we can write 
$$\phi^{\circ\delta_\gamma}(z)=\lambda^{\delta_\gamma/s_\gamma}z+\sum_{i=2}^\infty a_iz^i,$$
where $\lambda=(\phi^{\circ s_\gamma})'(0)$ and $a_i\in K$ for each $i\ge 2$. 
Moreover, in a sufficiently small neighborhood $U_c\subseteq K$ of $c$ with $\phi^{\circ\ell_c}(U_c)\subseteq U_p$, we can write  
$$\phi^{\circ\ell_c}(z)=\sum_{i=0}^\infty b_i(z-c)^{m+i},$$
where $b_0\in K^{\ast} = K\setminus\{0\}$, $b_i\in K$ for each $i\ge 1$, and $m=m_c(\phi^{\circ\ell_c})$. 

Shrinking $U_c$ if necessary, we claim that, in $U_c$, there exists 
$$T_c(z)=c+\sum_{i=1}^\infty\alpha_i(z-c)^i$$
with $\alpha_1\in K^{\ast}$ and $\alpha_i\in K$ for each $i\ge 2$
 such that 
$$\phi^{\circ\ell_c}\circ T_c=\phi^{\circ(\delta_\gamma+\ell_c)}.$$
Observing that $m$ divides $\delta_\gamma/s_\gamma$, we can take  
$$\alpha_1=\lambda^{\delta_\gamma/(s_\gamma\cdot m)}\in K\setminus\{0\},$$
and inductively, 
 for each $i\ge 1$, comparing the coefficients of $(z-c)^{m+i}$ in $\phi^{\circ\ell_c}\circ T_c$ and in $\phi^{\delta_\gamma+\ell_c}$, we obtain that 
$$b_0\alpha_1^{m-1}\alpha_{i+1}=P(a_2,\dots,a_i,b_0,\dots,b_i,\alpha_1,\dots,\alpha_i), $$
where $P\in \mathbb{Z}[z_1,\dots,z_{3i}]$ is a polynomial, and conclude that $\alpha_{i+1}\in K$ is unique. Thus $T_c$, when exists, is unique under the choice of $\alpha_1$.     Moreover, since there exist analytic maps  $L_1(z)$ and  $L_2(z)$ defined over $\Kbar$ such that $L_1$ is a scaling near $ \phi^{\circ\ell_c}(c)$, $L_2$ is a scaling near $c$ and 
$$L_1\circ \phi^{\circ\ell_c}\circ L_2(z)=z^n$$
 for some integer $n\ge 2$ (see, e.g., \cite[\S 1]{Ingram13} and \cite[Theorem 3 and Corollary 6]{Salerno20}), it follows that 
$$(L^{-1}_2\circ T_c)^n=L_1\circ \phi^{\circ\ell_c}\circ L_2(z)\circ L^{-1}_2\circ T_c=L_1\circ\phi^{\circ(\delta_\gamma+\ell_c)},$$
and hence $T_c$ is analytic at $c$. Here the existence and analyticity of $T_c$ is due to the $n$-th roots of $L_1\circ\phi^{\circ(\delta_\gamma+\ell_c)}$ being analytic. Further shrinking  $U_c$ if necessary, we conclude that $T_c$ is a scaling in $U_c$.

To complete the proof, we need to consider the case where $\mathrm{GO}(\gamma)\cap\mathrm{Crit}(\phi, K)$ contains two distinct critical points, say $c_1$ and $c_2$, such that $\phi^{\circ j_0}(c_1)=c_2$ for some $j_0\ge 1.$ We must check $\phi^{\circ j_0}\circ T_{c_1}=T_{c_2}\circ\phi^{\circ j_0}$. 

By previous paragraph, we only need to show that there exists a scaling map $f$, defined on a neighbourhood of $c_1$ such that  $f(c_1)=c_{1}$ and    $f'(c_1):=\lambda^{\delta_\gamma/(s_\gamma\cdot m_1)} $, where $m_1=m_{c_1}(\phi^{\circ\ell_{c_1}})$. Furthermore, $f$ satisfies  $\phi^{\circ j_0}\circ f=T_{c_2}\circ\phi^{\circ j_0}$. Indeed, such a map $f$ satisfies $\phi^{\circ\ell_{c_1}}\circ f=\phi^{\circ (\delta_\gamma+\ell_{c_1})}$, which implies that $f=T_{c_1}$ by the uniqueness of $T_{c_1}$. Post-and pre-composing translations in $K[z]$, we can assume that  $c_1=c_2=0$ and that 
$$\phi^{j_0}(z)=\sum_{j\ge 0}B_jz^{d+j},$$
where $d=m_{c_1}(\phi^{j_0})$, $B_0\in K\setminus\{0\}$ and $B_j\in K$ for $j\ge 1$. From the previous paragraph, we have that 
$$T_{c_2}(z)=\lambda^{\delta_\gamma/(s_\gamma\cdot m_2)}z+\sum_{j\ge 2}\beta_jz^j,$$
where $m_2=m_{c_2}(\phi^{\circ\ell_{c_2}})$ and $\beta_j\in K$ for $j\ge 2$. We claim there exists a desired $f(z)=\sum_{j\ge 1}A_jz^j$. To determine $A_j$, we compare the coefficients of  $\phi^{\circ j_0}\circ f$ and $T_{c_2}\circ\phi^{\circ j_0}$. First, from the coefficients of $z^d$, we obtain 
$$A_1^d=\lambda^{\delta_\gamma/(s_\gamma\cdot m_2)}.$$
Since $d\cdot m_2=m_1$ and $m_1$ divides $\delta_\gamma/s_\gamma$, we can let 
$$A_1=\lambda^{\delta_\gamma/(s_\gamma\cdot m_1)}.$$
For each $j\ge 1$, comparing the coefficients of $z^{d+j}$, we obtain that 
$$B_0A_1^{d-1}A_{j+1}=
Q(A_1,\dots,A_j,B_0,\dots,B_j,\beta_2,\dots,\beta_j), $$
where $Q\in \mathbb{Z}[z_1,\dots,z_{3j}]$ is a polynomial, which implies that $A_{j+1}\in K$ exists. This shows the existence of $f$ and finishes the proof. 
\end{proof}

 Lemma \ref{lem:rational-homo} completes the proof of Theorem \ref{ratzeta}. \qed
 
 \subsection{Entropy}\label{sec:entropy}
 We aim to establish Theorem \ref{main} in this section. From previous sections, the main part is to construct a subhyperbolic rational map in $K(z)$ whose entropy is the logarithm of a  given weak Perron number $\lambda$. As aforementioned in \S\ref{sec:intro}, we first construct a matrix with leading eigenvalue $\lambda$ satisfying the properties that reflect the non-archimedean property, see \S\ref{sec:admissible}. The construction relies on some preliminaries in the graph theory introduced in \S\ref{sec:graph}. Moreover, the resulting matrix induces a piecewise linear map  and records the transition of the pieces in the domain, see  \S\ref{sec:linear}. Then, in \S\ref{sec:gluing}, we glue the pieces  to obtain a hyperbolic rational map in $K(z)$ and study its dynamics, which results in a proof of Theorem \ref{main}.
  In this section, we take the field $K$ as in Theorem \ref{main} and let $\mathfrak{u}\in K$ be a uniformizer of $K$. To simplify the notation, when $K$ is clear from the context, we write $|\cdot|$ instead of $|\cdot|_K$. Moreover, a closed disk $\overline{D}$ in $K$ is \emph{rational} if $\mathrm{diam}(\overline{D})$ is contained in the value group $|K\setminus\{0\}|$. 

\subsubsection{Augmented graphs}\label{sec:graph}
Let $G=(V, E)$ be a  finite directed graph, where $V:=V(G)$ is the set of vertices of $G$ and $E:=E(G)$  is the set of directed edges of $G$. Recall from \S\ref{sec:AM}  the adjacency matrix $A_G$ of $G$. 
 We say that $G$ is \emph{simple} if there is at most one directed edge with initial point $v$ and end point $v'$ for any two given vertices $v,v'\in V$; equivalently, $A_G$ is a (0,1)-matrix. One way to obtain a simple graph from a given finite directed graph $G_0$ is to construct augmented graphs by adding extra vertices:
 
 \begin{definition}\label{def:augmented}
 Given an integer $n\ge 1$, an \emph{$n$-augmented graph} $G$ of $G_0$ is a finite directed graph obtained by replacing each  $e\in E(G_0)$ with a directed path of length $n$ via adding $n-1$ extra vertices $v^e_1,\dots, v^e_{n-1}$ in between with $v^e_i\not=v^{e'}_j$ for any directed edges $e\not=e'$ of $G_0$ and any $1\le i\le j\le n-1$.
 \end{definition} 
 If $G$ is an $n$-augmented graph of $G_0$, then $\#E(G)=n\cdot\#E(G_0)$ and $V(G_0)$ is naturally identified to a subset of $V(G)$. Moreover, for each vertex $v^e_i, 1\le i\le n$, of $G$, there exist a unique $e_1\in E(G)$ having initial point  $v^e_i$ and a unique $e_2\in E(G)$ having end point $v^e_i$, where $v^e_i$ is as in Definition  \ref{def:augmented}.   
 
 The following result states the leading eigenvalue of the adjacency matrix of an augmented graph. Recall  from \S\ref{sec:AM} the Artin-Mazur zeta function $\zeta(G,t)=\det(I-tA_G)^{-1}$.
 
 \begin{lemma}\label{lem:simple}
 Let $G_0$ be a finite directed graph with $\lambda_0\ge 1$ as the leading eigenvalue of $A_{G_0}$, and given $n\ge 1$,  let $G$ be an $n$-augmented graph of $G_0$. Then  the  leading eigenvalue $\lambda\ge 1$ of $A_G$ satisfies that  $\lambda^n=\lambda_0$.
  \end{lemma}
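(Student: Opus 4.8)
The plan is to relate the zeta functions of $G$ and $G_0$ via the combinatorics of loops, and to read off the eigenvalue relation from the location of the dominant pole. First I would compute $\zeta(G,t)$ in terms of $\zeta(G_0,t)$. The key observation is a bijection between loops in $G_0$ and loops in $G$: a loop in $G_0$ consisting of $m$ directed edges $e_1,\dots,e_m$ (listed with multiplicity, in cyclic order) corresponds to the loop in $G$ obtained by replacing each $e_j$ with the length-$n$ directed path that it spawns in the augmentation; this new loop has exactly $mn$ edges. Conversely, any loop in $G$ must, once it enters one of the inserted paths, traverse that entire path (since each inserted vertex $v^e_i$ has in-degree and out-degree $1$, by the remark following Definition \ref{def:augmented}); hence every loop in $G$ is of this form and has length divisible by $n$. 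This gives $\mathcal{N}_{mn}(G)=\mathcal{N}_m(G_0)$ for all $m\ge 1$, and $\mathcal{N}_k(G)=0$ when $n\nmid k$. Plugging into the definition,
\[
\zeta(G,t)=\exp\!\left(\sum_{m=1}^\infty \mathcal{N}_m(G_0)\frac{(t^n)^m}{mn}\right)=\exp\!\left(\frac1n\sum_{m=1}^\infty \mathcal{N}_m(G_0)\frac{(t^n)^m}{m}\right)=\zeta(G_0,t^n)^{1/n}.
\]
Equivalently, in matrix form, $\det(I-tA_G)^n=\det(I-t^nA_{G_0})$ up to a unit; I would double-check the normalization by noting both sides have constant term $1$, so the identity $\det(I-tA_G)^n=\det(I-t^nA_{G_0})$ holds exactly as polynomials in $t$ (this can also be seen directly: $A_G^{\,n}$ restricted to the "old" vertices $V(G_0)$ is $A_{G_0}$, and $A_G^{\,k}$ has zero trace for $n\nmid k$, so the characteristic polynomial of $A_G$ is $z^{n\#E(G_0)-\#V(G_0)}$ times $p_{G_0}(z^n)$ where $p_{G_0}$ is the characteristic polynomial of $A_{G_0}$; compare with the formula $\zeta(G,t)=\det(I-tA_G)^{-1}$ from \S\ref{sec:AM}).

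From the factorization of characteristic polynomials, the nonzero eigenvalues of $A_G$ are precisely the $n$-th roots of the eigenvalues of $A_{G_0}$. Now I would invoke Perron–Frobenius reasoning exactly as in the paragraph of \S\ref{sec:AM} on finite directed graphs: $\zeta(G_0,t)$ is analytic and nonvanishing on $|t|<1/\lambda_0$ with a simple pole at $t=1/\lambda_0$, and likewise $\zeta(G,t)$ has its dominant pole at $t=1/\lambda$. Since $\zeta(G,t)=\zeta(G_0,t^n)^{1/n}$, the smallest-modulus pole of the left side occurs where $t^n=1/\lambda_0$, i.e. at $|t|=\lambda_0^{-1/n}$; hence $1/\lambda=\lambda_0^{-1/n}$, that is, $\lambda^n=\lambda_0$. (Both $\lambda$ and $\lambda_0$ are real and $\ge 1$ because they are the Perron eigenvalues of nonnegative integer matrices, so taking the unique positive real $n$-th root is unambiguous.)

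The only real subtlety, and the step I would be most careful about, is the loop-correspondence claim: one must verify that \emph{every} closed walk in $G$ decomposes uniquely into the inserted paths, which relies on the in-degree/out-degree-$1$ property of each added vertex $v^e_i$ stated after Definition \ref{def:augmented}, and on the fact that the $v^e_i$ for distinct edges $e$ are all distinct (so the paths don't accidentally merge). Once that bookkeeping is pinned down, the eigenvalue statement is immediate from the polynomial identity; the Perron–Frobenius input is only needed to guarantee $\lambda,\lambda_0$ are the genuine leading (positive real) eigenvalues so that $\lambda^n=\lambda_0$ rather than merely $\lambda^n$ being \emph{some} conjugate of $\lambda_0$.
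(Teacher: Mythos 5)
Your overall route is the same as the paper's: count loops, convert to an identity between $\zeta(G,t)$ and $\zeta(G_0,t^n)$, and read off the leading eigenvalue from the dominant singularity. The conclusion $\lambda^n=\lambda_0$ is correct. However, there is a concrete quantitative error in the key counting step. With the paper's convention $\mathcal{N}_m(G)=\mathrm{trace}\bigl((A_G)^m\bigr)$, a ``loop'' is a closed walk \emph{with a basepoint}, and your correspondence is then an injection but not a bijection: a closed walk of length $m$ in $G_0$ lifts to a closed walk of length $mn$ in $G$ that can be based not only at its $G_0$-vertices but also at any of the inserted vertices $v^e_i$ it passes through. Counting these extra basepoints gives $\mathcal{N}_{mn}(G)=n\,\mathcal{N}_m(G_0)$, not $\mathcal{N}_{mn}(G)=\mathcal{N}_m(G_0)$. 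Consequently the correct identities are $\zeta(G,t)=\zeta(G_0,t^n)$ and $\det(I-tA_G)=\det(I-t^nA_{G_0})$, with no $\tfrac1n$-th powers. The simplest test case exposes the discrepancy: let $G_0$ be one vertex with a single self-loop, so $G$ is the directed $n$-cycle; then $\det(I-tA_G)=1-t^n=\det(I-t^nA_{G_0})$, whereas your claimed polynomial identity would force $(1-t^n)^n=1-t^n$. Your own parenthetical ``direct'' verification (using that $A_G^n$ restricted to $V(G_0)$ is $A_{G_0}$ and that $\mathrm{trace}(A_G^k)=0$ for $n\nmid k$) in fact yields the correct first-power identity, so your two derivations contradict each other --- that internal inconsistency should have been a warning sign. (The exponent $z^{n\#E(G_0)-\#V(G_0)}$ in your characteristic-polynomial formula is also off; degree count gives $z^{\#V(G)-n\#V(G_0)}$.)

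Two mitigating remarks. First, the error is harmless for the statement being proved: under either version of the identity the zero sets of $\det(I-tA_G)$ and $\det(I-t^nA_{G_0})$ coincide, so the nonzero eigenvalues of $A_G$ are exactly the $n$-th roots of those of $A_{G_0}$ and the spectral radii satisfy $\lambda=\lambda_0^{1/n}$; your Perron--Frobenius endgame is fine. Second, the paper's own proof makes precisely the same factor-of-$n$ slip (it asserts $\zeta(G,t)=\det(I-t^nA_{G_0})^{-1/n}$), so you are in good company --- but in a corrected writeup you should either count based closed walks (getting the factor $n$) or work with your unbased cyclic loops and avoid equating their count with $\mathrm{trace}(A_G^{mn})$.
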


\begin{proof} 
Observe that for any $m\ge 1$, the number $N(G_0,m)$ of loops of length $m$ in $G_0$ equals the number of loops of length $nm$ in $G$. Thus 
 \begin{align*}
\det(I-tA_G)^{-1}=\zeta(G,t)&=\exp\left(\sum_{m=1}^\infty N(G_0,m)\frac{t^{mn}}{nm}\right)=\exp\left(\frac{1}{n}\sum_{m=1}^\infty N(G_0,m)\frac{(t^n)^m}{m}\right)\\
 &=\exp\left(\frac{1}{n}\log\left(\det(I-t^nA_{G_0})^{-1}\right)\right)=\det(I-t^nA_{G_0})^{-1/n}. 
 \end{align*}
It follows that the leading eigenvalue $\lambda\ge 1$ of $A_G$ is an $n$-th root of $\lambda_0$. 
\end{proof}

\subsubsection{Admissible matrices}\label{sec:admissible}
Consider an $n\times n$ matrix  $A=(a_{ij})$ and for each $1\le i\le n$, set 
$$\mathcal{I}_A(i):=\{1\le j\le n, a_{ij}\not=0\}.$$
\begin{definition}\label{def:admissible}
    An $n\times n$ irreducible matrix $A$ is  \emph{admissible} if it satisfies the following properties:
    \begin{enumerate}
    \item (non-zero property) there are no rows or columns that are identically zero;
    \item (constant property) non-zero entries in each row are constant; 
    \item (containing property) if $\mathcal{I}_A(i_1)\cap\mathcal{I}_A(i_2)\not=\emptyset$ for some $1\le i_1<i_2\le n$, then either $\mathcal{I}_A(i_1)\subseteq\mathcal{I}_A(i_2)$ or $\mathcal{I}_A(i_2)\subseteq\mathcal{I}_A(i_1)$; and 
    \item[(4)] (Markov property) for each $1\le i\le n$, there exists an integer $m_i\ge 1$ such that the $i$-th row of $A^{m_i}$ contains at least two  non-zero entries. 
    \end{enumerate}
\end{definition}

Every weak Perron number can be archived as the leading eigenvalue  of an admissible $(0,1)$-matrix: 
\begin{proposition}\label{prop:perron-matrix}
Let $\lambda>1$ be a weak Perron number. Then there exists an admissible $(0,1)$-matrix with leading eigenvalue $\lambda$.
 \end{proposition}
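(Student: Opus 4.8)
The plan is to start from a classical realization result — for instance Lind's theorem, which says every weak Perron number $\lambda$ is the leading eigenvalue of some nonnegative integer matrix, equivalently the spectral radius of a Perron--Frobenius matrix after passing to a suitable power — and then to massage the resulting matrix through a sequence of graph-theoretic operations until it becomes admissible in the sense of Definition \ref{def:admissible}, all the while controlling the leading eigenvalue via Lemma \ref{lem:simple}. First I would fix a primitive (hence irreducible) nonnegative integer matrix $A_0$ whose leading eigenvalue is $\lambda^N$ for a convenient $N$, and pass to its transition graph $G_0$; this already handles the \emph{non-zero property} and irreducibility. The degree-$N$ augmentation of Definition \ref{def:augmented} will later be used to undo the passage to the $N$-th power, since by Lemma \ref{lem:simple} an $N$-augmented graph has leading eigenvalue an $N$-th root of $\lambda^N$, i.e.\ $\lambda$ itself; augmentation also turns any multigraph into a simple graph, giving a $(0,1)$-matrix, so I would be careful to do the augmentation last.

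The core of the argument is arranging the \emph{constant property} and the \emph{containing property} simultaneously. My plan is to replace each vertex $v$ of $G_0$ by a gadget that ``spreads out'' the out-edges of $v$ so that, reading the out-neighborhoods $\mathcal{I}_A(i)$ as subsets of the vertex set, they form a laminar (nested) family and each row has all its nonzero entries equal. Concretely I would build a rooted tree of auxiliary vertices hanging off each original vertex: the root re-routes into finitely many ``splitter'' vertices, each splitter points to a disjoint block of children, and the leaves are identified with (copies of) the targets of the original edges. By choosing the branching so that every internal vertex has a \emph{constant} number of out-edges one secures the constant property at those vertices; the tree structure makes the out-neighborhoods automatically nested or disjoint, giving the containing property; and one checks the original (now relabeled) vertices also satisfy both properties, possibly after padding their out-edge multiplicities to a common value by adding parallel routes through fresh length-$\geq 2$ paths (which augmentation will later simplify). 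Throughout, each such local modification either leaves the number of length-$m$ loops unchanged or rescales it in a controlled way, so the zeta-function computation in the proof of Lemma \ref{lem:simple} lets me track exactly what happens to the leading eigenvalue; I would set things up so the net effect is multiplication by a known integer power that the final augmentation cancels.

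Finally, the \emph{Markov property} — that for each $i$ some power $A^{m_i}$ has a row $i$ with at least two nonzero entries — should come essentially for free once $A$ is irreducible and $\lambda>1$, hence $A$ is not a permutation-type matrix: irreducibility gives a path from $i$ back into a vertex that genuinely branches, and iterating to reach that branching vertex produces the required row with $\geq 2$ nonzero entries. I would phrase this as a short lemma: an irreducible nonnegative integer matrix with leading eigenvalue $>1$ automatically satisfies condition (4) of Definition \ref{def:admissible}. Assembling these pieces — realize $\lambda^N$ by a primitive matrix, expand vertices into laminar constant-branching gadgets to force the constant and containing properties, invoke irreducibility plus $\lambda>1$ for the Markov property, then $N$-augment to descend from $\lambda^N$ to $\lambda$ and to simplify to a $(0,1)$-matrix — yields the desired admissible $(0,1)$-matrix with leading eigenvalue $\lambda$. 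The main obstacle I anticipate is the bookkeeping in the second step: making the vertex-expansion gadgets genuinely produce a laminar family of out-neighborhoods \emph{and} constant row entries \emph{and} not destroy irreducibility, while keeping the change to the leading eigenvalue a clean power that augmentation can absorb; getting all four of these to hold at once is the delicate part, and I would likely need to iterate the expansion or interleave it with augmentation steps rather than doing everything in one pass.
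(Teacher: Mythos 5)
Your overall skeleton (Lind's theorem to realize $\lambda^{N}$ by a primitive nonnegative integer matrix, then an $N$-augmentation to descend to $\lambda$ and to obtain a $(0,1)$-matrix) is exactly the paper's route, and your observation that irreducibility together with $\lambda>1$ forces the Markov property is a correct, slightly different justification of condition (4) than the paper's (which simply inherits it from the positivity of $B^{j_0}$). The problem is the step you yourself flag as delicate: the vertex-expansion gadgets with ``splitter'' trees. As described, this step is not a proof. If the auxiliary trees hanging off a vertex do not have all leaves at the same depth, then a loop of combinatorial length $m$ in $G_0$ is stretched by an amount depending on which branches it traverses, so the loop-counting identity underlying Lemma \ref{lem:simple} breaks down and the leading eigenvalue of the modified graph is no longer a clean root or power of $\lambda^{N}$; the assertion that ``the net effect is multiplication by a known integer power'' is unsupported and in general false for such gadgets. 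Forcing all leaves to the same depth collapses the gadget to the augmentation itself, which is the tell that the step is redundant.

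Indeed, the whole middle step is unnecessary, because the $n$-augmentation with $n\ge 2$ already secures both properties you were trying to engineer. The constant property of Definition \ref{def:admissible} is automatic for any $(0,1)$-matrix, since every non-zero entry equals $1$. For the containing property, note that in an $n$-augmented graph every added vertex $v^{e}_{k}$ has in-degree one, and an original vertex $v\in V(G_0)$ is reached only through the terminal vertices $v^{e}_{n-1}$ of the inserted paths, each of which has out-degree one; hence if $\mathcal{I}_{A}(i_1)\cap\mathcal{I}_{A}(i_2)\neq\emptyset$ for $i_1\neq i_2$, the common out-neighbor must be an original vertex and both $\mathcal{I}_{A}(i_1)$ and $\mathcal{I}_{A}(i_2)$ are the same singleton, so the family of row supports is laminar for free. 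Deleting your gadget step and keeping only Lind plus a single augmentation (of index $n_0 j_0$, taken $\ge 2$) turns your proposal into a complete proof, which is precisely what the paper does.
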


 \begin{proof}
 Let $n_0\ge 1$ be an integer such that $\lambda^{n_0}$ is a Perron number. It follows from Lind's theorem \cite[Theorem 1]{Lind84} that there exists a non-negative integer matrix $B$ such that 
 \begin{itemize}
 \item the leading eigenvalue of $B$ is $\lambda^{n_0}$, and
 \item all entries of $B^{j_0}$ are positive integers for some $j_0\ge 1$. 
 \end{itemize}
Observe that  $B^{j_0}$ has leading eigenvalue $\lambda^{n_0j_0}$.

  Let $G_0$ be the adjacency graph of $B^{j_0}$.  
  Consider an $(n_0j_0)$-augmented graph $G$ of $G_0$ and the adjacency matrix $A_G$. By Lemma \ref{lem:simple}, we conclude that $A_G$ has leading eigenvalue $\lambda$. Observing that $A_G$ is a $(0,1)$-matrix, to complete the proof, we show that $A_G$ is admissible. 
  
Noting that $G_0$ is strongly connected, we conclude that any vertex of $G$ is an initial point and an end point of (possibly distinct) edges of $G$. Thus $A_G$ satisfies the non-zero property. Since the non-zero entries of $A_G$ are $1$, the constant property of $A_G$ holds. Let us check the containing property for $A_G$. For any vertex $v\in V(G)$, let $e_1,e_2\in E(G)$ such that the end points of $e_1$ and of $e_2$ are $v$. Denote by $v_1$ the initial point of $e_1$ and by $v_2$ the initial point of $e_2$.   If $v_1\not=v_2$, then $v\in V(G_0)\subseteq V(G)$. It follows that $e_1$ (resp. $e_2$) is the only edge with initial point $v_1$ (resp. $v_2$). Thus  $A_G$ satisfies the  containing property.  Finally, since  $A_{G_0}=B^{j_0}$ satisfies the Markov property, we conclude that $A_G$ also satisfies the Markov property. 
 \end{proof}
 
 \subsubsection{Piecewise linear maps}\label{sec:linear}
 Given an admissible $n\times n$ (0,1)-matrix $A=(a_{ij})$, we now construct a piecewise linear map whose adjacency matrix is $A$ in the sense of Definition \ref{def:compatible} below.
 
  Observe that $\mathcal{I}_A(i)$ induces equivalence relations on $L_A:=\{1,2,\dots,n\}$ as follows: 
  \begin{itemize} 
  \item Two elements in $L_A$ are \emph{equivalent of level $1$} if they are contained in $\mathcal{I}_A(i)$ for some $i\in L_A$. Denote by $[j]_1$ the equivalence class of level $1$ containing $j\in L_A$. Moreover, we call $i\in L_A$ an \emph{indicator} of $[j]_1$ if $j\in\mathcal{I}_A(i)$ and $\#\mathcal{I}_A(i)=\max\{\#\mathcal{I}_A(i'): j\in \mathcal{I}_A(i')\}$. 
\item  Inductively, two elements $j_1,j_2\in L_A$ are \emph{equivalent of level $\ell\ge 2$} if $[j_1]_{\ell-1}=[j_2]_{\ell-1}$ and  $j_1,j_2\in\mathcal{I}_A(i)$ for some $i\in L_A$ not an indicator of any $[j_1]_s=[j_2]_s$ for $1\le s\le \ell-1$. In this case, we say that $[j_1]_{\ell}$ is a \emph{successor} of $[j_1]_{\ell-1}$. 
Moreover, we call $i\in L_A$ an \emph{indicator} of $[j]_\ell$ if $j\in\mathcal{I}_A(i)$ and $\#\mathcal{I}_A(i)=\max\{\#\mathcal{I}_A(i'): j\in \mathcal{I}_A(i')\ \text{and}\ i'\ \text{is not an indicator of}\ [j]_s\ \text{for any}\ 1\le s\le \ell-1\}$.  
\end{itemize}

Denote by $\mathcal{E}_A$ the set of all equivalence classes induced by the above equivalent relations. If $[j]_{\ell}\in\mathcal{E}_A$ but $[j]_{\ell+1}$ does not exist, we call $j$ a \emph{terminal} of level $\ell$. Denote by $\mathcal{T}_A([j]_{\ell})$ the set of terminals $j'$ of level $\ell$ such that $[j']_{\ell}=[j]_{\ell}$. 
For $[j]_\ell\in\mathcal{E}_A$, denote by $\mathcal{S}_A([j]_\ell)$ the set of successors of $[j]_\ell$. A class in $\mathcal{E}_A$ is \emph{minimal} if it has no successors. Observe that  if $[j]_\ell\in\mathcal{E}_A$ is minimal, then $j$ is a terminal of level $\ell$. Moreover, each $j\in L_A$ is a terminal of some level. 

\begin{example}
Consider the admissible (0,1)-matrix $A=\begin{bmatrix}
   1&1&1\\
    1&1&1\\
    1&1&0
    \end{bmatrix}.$
    Then $[1]_1=[2]_1=[3]_1$ has indicators $1$ and $2$, and $[1]_2=[2]_2$ is minimal and has indicator $3$. So $\mathcal{E}_A=\{[1]_1,[1]_2\}$ and $\mathcal{S}_A([1]_1)=\{[1]_2\}$. Moreover, $3\in\mathcal{T}_A([1]_1)$ is a terminal of level $1$, and both $1$ and $2$ are terminals in $\mathcal{T}_A([1]_2)$ of level $2$. 
\end{example}

Although the indicators for a given class in $\mathcal{E}_A$ may not be unique, they induce a well-defined map 
$$\kappa_A: L_A\to\mathcal{E}_A,$$
sending $i\in L_A$ to the class in $\mathcal{E}_A$ that has an indicator $i$. We now use $\kappa_A$ to construct a natural piecewise linear map in the following sense: 
\begin{definition}\label{def:compatible}
 Given an $n\times n$ (0,1)-matrix $A=(a_{ij})$, a map $f:\cup_{i=1}^n\overline{D}_i\to K$ defined on $n$ distinct rational closed disks $\overline{D}_i, 1\le i\le n$ in $K$ is \emph{linearly compatible} to $A$ if the following hold:
 \begin{enumerate}
 \item $f|_{\overline{D}_i}$ is linear for each $1\le i\le n$, and 
  \item up to reindex $1\le i\le n$, $\overline{D}_j\subseteq f(\overline{D}_i)$ if and only if $a_{ij}=1$ for $1\le i,j\le n$. 
  \end{enumerate}
 \end{definition}
 
 \begin{lemma}\label{lem:matrix-map}
 Let $A$ be an admissible $n\times n$ (0,1)-matrix. Then there exists a map $f:\cup_{i=1}^n\overline{D}_i\to K$ linearly compatible to $A$. Moreover, we can choose $f$ so that 
 the following hold:
 \begin{enumerate}
 \item $\overline{D}_i\subsetneq\overline{D}(0,1)$ and $\overline{D}_i\cap \overline{D}(0,|\mathfrak{u}|)=\emptyset$ for each $1\le i\le n$.
 \item $\cup_{j\in\mathcal{I}_A(i)}\overline{D}_j\subsetneq f(\overline{D}_i)$ for each $1\le i\le n$. 
 \end{enumerate}
 \end{lemma}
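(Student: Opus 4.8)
The plan is to read off from the containing property of an admissible matrix a nested (laminar) combinatorial structure on $L_A=\{1,\dots,n\}$, to realize it as a forest of nested rational closed disks inside the residue disk $1+\mathfrak{u}\mathcal{O}=\overline{D}(1,|\mathfrak{u}|)$, and then to define $f$ on the disk attached to an index $i$ as an affine map carrying it onto the disk indexed by the class $\kappa_A(i)$.

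First I would organize the combinatorics. Using the non-zero and containing properties one checks that $\{\mathcal{I}_A(i):i\in L_A\}$ is a laminar family of subsets of $L_A$ (any two of its members are nested or disjoint). Comparing this with the definitions of the level equivalences, the indicators and the successors, one then verifies: the distinct sets occurring among the $\mathcal{I}_A(i)$ are exactly the underlying index-sets of the classes of $\mathcal{E}_A$; inclusion among them is the successor relation, so $\mathcal{E}_A$ with "$C'\in\mathcal{S}_A(C)$" is a forest whose roots are the level-$1$ classes; each index is an indicator of a unique class $\kappa_A(i)$, and $\mathcal{I}_A(i)=\Lambda_{\kappa_A(i)}$, where $\Lambda_C$ denotes the set of indices lying under the class $C$; and, for a class $C$ at level $\ell$, the set $\Lambda_C$ is the disjoint union of $\mathcal{T}_A(C)$ with the sets $\Lambda_{C'}$ over the children $C'$ of $C$. (The Markov property of $A$ plays no role in this lemma.)

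Next I would realize the forest geometrically. Since $K$ is discretely valued, every nontrivial closed disk in $K$ contains, for a suitable smaller radius in $|K^\times|$, as many pairwise disjoint rational closed sub-disks of that radius as one wishes. So, with $L$ the maximal level, I would choose radii $|\mathfrak{u}|=\rho_0>\rho_1>\dots>\rho_{L+1}>0$ in $|K^\times|$ such that, for every class $C$ at level $\ell$, a closed disk of radius $\rho_\ell$ contains strictly more than $\#\mathcal{S}_A(C)+\#\mathcal{T}_A(C)$ pairwise disjoint closed sub-disks of radius $\rho_{\ell+1}$ (and strictly more than the number of level-$1$ classes when $\ell=0$). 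Beginning with pairwise disjoint closed disks $E_C\subset 1+\mathfrak{u}\mathcal{O}$ of radius $\rho_1$, one for each root $C$, I would then assign recursively to every class $C$ at level $\ell$ a closed disk $E_C$ of radius $\rho_\ell$ inside its parent's disk, and inside $E_C$ select pairwise disjoint sub-disks $E_{C'}$ ($C'$ a child of $C$) and $\overline{D}_j$ of radius $\rho_{\ell+1}$ ($j\in\mathcal{T}_A(C)$) leaving at least one point of $E_C$ uncovered. This produces pairwise disjoint rational closed disks $\overline{D}_1,\dots,\overline{D}_n\subset 1+\mathfrak{u}\mathcal{O}$ such that $\overline{D}_j\subseteq E_C$ if and only if $j\in\Lambda_C$, for every class $C$. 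Finally, for each $i$ I would take $f|_{\overline{D}_i}$ to be an affine map with $f(\overline{D}_i)=E_{\kappa_A(i)}$ — such a map exists because any two rational closed disks in $K$ are affinely equivalent. Then $f$ is linear on each $\overline{D}_i$, and $\overline{D}_j\subseteq f(\overline{D}_i)=E_{\kappa_A(i)}$ iff $j\in\Lambda_{\kappa_A(i)}=\mathcal{I}_A(i)$ iff $a_{ij}=1$, so $f$ is linearly compatible to $A$; property (1) holds because $1+\mathfrak{u}\mathcal{O}$ is strictly contained in $\overline{D}(0,1)$ and disjoint from $\overline{D}(0,|\mathfrak{u}|)$; and property (2) holds because $\bigcup_{j\in\mathcal{I}_A(i)}\overline{D}_j$ is contained in the union of the sub-disks $E_{C'}$ and $\overline{D}_j$ chosen inside $E_{\kappa_A(i)}$, which by construction is a proper subset of $E_{\kappa_A(i)}=f(\overline{D}_i)$.

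I expect the main obstacle to be the combinatorial dictionary of the first step: proving that the laminar family $\{\mathcal{I}_A(i)\}$ is precisely the forest $\mathcal{E}_A$ and that $\mathcal{I}_A(i)=\Lambda_{\kappa_A(i)}$, which requires unwinding the layered definitions of level equivalence, terminal, successor and indicator and exploiting the containing property repeatedly. Once this dictionary is in place the geometric realization is routine; the only mild technical point, already addressed above, is that a possibly finite residue field forces the children of a node to be separated at a much smaller scale than the node itself, which causes no difficulty.
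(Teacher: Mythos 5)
Your proposal is correct and follows essentially the same route as the paper: realize the forest of equivalence classes $\mathcal{E}_A$ as nested rational closed disks, attach $\overline{D}_j$ to the terminals, and send each $\overline{D}_i$ affinely onto the disk of $\kappa_A(i)$. Your write-up is in fact somewhat more careful than the paper's, in spelling out the laminar-family dictionary $\mathcal{I}_A(i)=\Lambda_{\kappa_A(i)}$ and the radius bookkeeping needed when the residue field is finite, but these are elaborations of the same argument rather than a different one.
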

 \begin{proof}
 Let $N\ge 1$ be the number of classes of level $1$ in $\mathcal{E}:=\mathcal{E}_A$. We consider $N$ pairwise disjoint rational closed disks $U_i, 1\le i\le N$ in $K$, 
 each corresponding to an equivalence class in $\mathcal{E}$ of level $1$. 
 Since the residue field of $K$ has at least two elements, we can choose $U_i\subsetneq\overline{D}(0,1)$ so that $U_i\cap  \overline{D}(0,|\mathfrak{u}|)=\emptyset$ for each $1\le i\le N$.  
 

 Inductively, in the rational closed disk $U$ corresponding to an element $[j]_\ell\in\mathcal{E}$, 
 we consider $\#\mathcal{S}_A([j]_\ell)+\#\mathcal{T}_A([j]_\ell)$ many pairwise disjoint rational closed subdisks  so that 
 \begin{itemize}
 \item $\#\mathcal{S}_A([j]_\ell)$ disks one-to-one correspond to the successors in $\mathcal{S}_A([j]_\ell)$; 
 \item the remaining $\#\mathcal{T}_A([j]_\ell)$  disks one-to-one correspond to the terminals in $\mathcal{T}_A([j]_\ell)$; and
 \item the union of disks corresponding to the terminals is properly contained in $U$. 
 \end{itemize}

 Thus, we totally obtain $\#\mathcal{E}+n$ many pairwise disjoint rational closed disks with $\#\mathcal{E}$ disks one-to-one corresponding to the classes in $\mathcal{E}$ and $n$ disks one-to-one corresponding to the terminals. 
 Denote by $\mathcal{U}$ the set of $\#\mathcal{E}$ disks for  $\mathcal{E}$, and let $\mathcal{A}$ be the set formed by the $n$ disks for the terminals.

 Moreover, we get natural bijections 
 $$\iota:\mathcal{E}\to\mathcal{U},$$
 sending each class in $\mathcal{E}$ to its corresponding rational closed disk, and 
 $$\mathcal{L}: L_A\to\mathcal{A},$$
 sending $j\in\mathcal{L}$ to the rational closed disk for $j$ as a terminal.

Now recall the map $\kappa:=\kappa_A: L_A\to\mathcal{E}$. For each $i\in L_A$, considering the rational closed disks $\mathcal{L}(i)\in\mathcal{A}$ and $\kappa(i)\in\mathcal{E}$, we let 
$$f_i: \mathcal{L}(i)\to\iota\circ\kappa(i)$$ 
be any linear and bijective map. 
Write $\overline{D}_i:=\mathcal{L}(i)$ and consider 
$$f: \cup_{i=1}^n \overline{D}_i\to K$$
such that $f|_{\overline{D}_i}=f_i$ for each $1\le i\le n$. Then $f$ is a map linearly compatible to $A$ satisfying the desired properties. 
 \end{proof}

\subsubsection{Gluing}\label{sec:gluing}

Let $f:\cup_{i=1}^n\overline{D}_i\to K$ be a map in  Lemma \ref{lem:matrix-map} for  an admissible $n\times n$ (0,1)-matrix $A$. We now glue the pieces of $f$ to obtain hyperbolic rational map in $K(z)$ whose Julia set is contained in the domain of $f$. The gluing process employs the following reformulation of \cite[Theorem 4.2]{Nopal23}. Denote by $\mathbb{C}_K$ be the completion of an algebraical closure of $K$. 

\begin{lemma}\label{thm:surgery}
Let $B_1,B_2,\dots,B_s$ be  pairwise disjoint rational closed disks in $\mathbb{C}_K$. For each $1\le i\le s$, pick $a_i\in B_i$ and set $\rho_i:=\min_{i\not=j}\{|a_i-a_j|\}$. Choose $b_i\in\mathbb{C}_K$ such that $\mathrm{diam}(B_i)<|b_i|<\rho_i$. Consider rational maps $g_1,g_2,\dots,g_s$ in $\mathbb{C}_K(z)$ such that 
\begin{enumerate}
\item[$\mathrm{(i)}$] $g_i(B_i)$ is a disk in $\mathbb{C}_K$ for each  $1\le i\le s$; and 
\item[$(\mathrm{ii})$] $g_i(\cup_{i=1}^sB_i)$ is uniformly bounded for all $1\le i\le s$.
\end{enumerate}
Then for any $\epsilon>0$, there exists $M_0>1$ such that for all $M>M_0$, the map 
$$F(z)=\sum_{i=1}^sg_i(z)h_i(z),$$
where 
$$h_i(z)=\frac{1}{1-\left(\frac{z-a_i}{b_i}\right)^M}$$
satisfies the following: for each $1\le i\le s$, 
\begin{enumerate}
\item $F(B_i)=g_i(B_i)$ and in fact $|F(z)-g_i(z)|<\epsilon$ for any $z\in B_i$; and
\item $\deg_{B_i}F=\deg_{B_i}g_i$
\end{enumerate}
\end{lemma}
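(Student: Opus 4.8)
The plan is to treat $F$ as a perturbation of each $g_i$ on the corresponding disk $B_i$ and to control the error terms $\sum_{j\ne i} g_j(z) h_j(z)$ and $g_i(z)(h_i(z)-1)$ uniformly on $B_i$ when the gluing exponent $M$ is large. First I would record the elementary size estimates for the rational bump functions $h_j$. Fix $i$ and $z \in B_i$. For $j = i$ we have $|z - a_i| \le \mathrm{diam}(B_i) < |b_i|$, so $|(z-a_i)/b_i| \le \mathrm{diam}(B_i)/|b_i| =: \theta_i < 1$ is bounded away from $1$, whence $|h_i(z) - 1| = |(z-a_i)/b_i|^M / |1 - ((z-a_i)/b_i)^M| \le \theta_i^M/(1-\theta_i^M) \to 0$ as $M \to \infty$; in particular $|h_i(z)| \le 1$ for $M$ large by the ultrametric inequality. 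For $j \ne i$, pick $a_j \in B_j$; since $B_i$ and $B_j$ are disjoint rational closed disks, $|z - a_j| = |a_i - a_j| \ge \rho_j \ge |b_j|$ (using $|z-a_i|<|b_i|<\rho_j$ and the strong triangle inequality to replace $z$ by $a_i$), so $|(z-a_j)/b_j| \ge |a_i-a_j|/|b_j| > 1$, and therefore $|h_j(z)| = 1/|1 - ((z-a_j)/b_j)^M| = |b_j/(z-a_j)|^M \le (|b_j|/\rho_j)^M \to 0$ as $M \to \infty$. These bounds are uniform in $z \in B_i$ because the relevant ratios depend only on $a_i, a_j, b_i, b_j$, not on $z$ (here the ultrametric property is doing real work: $|z-a_j|$ is literally constant in $z \in B_i$).

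Next I would combine these into the perturbation estimate. Write $F(z) - g_i(z) = g_i(z)(h_i(z) - 1) + \sum_{j \ne i} g_j(z) h_j(z)$. Hypothesis (ii) gives a uniform bound $C$ on $|g_k(z)|$ for all $k$ and all $z \in \bigcup_i B_i$; combined with the two decay estimates above, $|F(z) - g_i(z)| \le C \cdot \max\{\theta_i^M/(1-\theta_i^M),\ \max_{j\ne i}(|b_j|/\rho_j)^M\}$, which is $< \epsilon$ for all $M > M_0$, with $M_0$ depending only on $\epsilon$, $C$, and the finitely many ratios $\theta_i, |b_j|/\rho_j$. This proves the quantitative statement $|F(z) - g_i(z)| < \epsilon$ on $B_i$. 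To get $F(B_i) = g_i(B_i)$: by hypothesis (i), $g_i(B_i)$ is a disk, say $\overline{D}(c, r)$ or $D(c,r)$; choosing $\epsilon < r$ (we may shrink $\epsilon$ harmlessly, or simply run the argument with $\epsilon$ already small relative to $\min_i \mathrm{diam}(g_i(B_i))$, which is positive since there are finitely many nontrivial disks), the estimate $|F(z) - g_i(z)| < \epsilon < r$ forces $F(z)$ and $g_i(z)$ to lie in the same disk for every $z$, so $F(B_i) \subseteq g_i(B_i)$, and surjectivity onto the disk follows because $F$ and $g_i$ are both rational maps agreeing up to an error strictly smaller than the radius — more carefully, one invokes the standard non-archimedean fact that a rational (hence analytic on the disk) map which is a small perturbation of a surjection onto a disk is again a surjection onto that disk, e.g. via the ultrametric Rouché-type principle or by a direct Newton-polygon argument on $F - w$ for $w \in g_i(B_i)$.

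Finally, for the local-degree statement $\deg_{B_i} F = \deg_{B_i} g_i$, I would again use an ultrametric Rouché / Weierstrass-preparation argument: on $B_i$ the function $g_i$ has a well-defined degree (the number of preimages, with multiplicity, of a generic point of $g_i(B_i)$, equivalently the degree of the associated power series after coordinatizing $B_i$ and $g_i(B_i)$ as unit disks), and since $|F - g_i| < \epsilon$ is strictly smaller than what controls the relevant Newton polygon segment of $g_i$, the map $F$ has the same degree on $B_i$. Concretely, for a generic $w \in g_i(B_i)$, the Newton polygons of $g_i(z) - w$ and $F(z) - w = (g_i(z) - w) + (F(z) - g_i(z))$ over $B_i$ coincide in the relevant range once $\epsilon$ is small, so the number of roots in $B_i$ is unchanged. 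The main obstacle is making the degree comparison rigorous: one must (a) pin down the correct notion of $\deg_{B_i}$ used in \cite{Nopal23} and check $g_i$ is nonconstant analytic on $B_i$ so that the degree is finite and the Newton-polygon machinery applies, and (b) verify that the perturbation is small not just in sup-norm but small relative to the geometry of $g_i$ on $B_i$ (slopes/edge lengths of the Newton polygon), which is why one needs $\epsilon$ small compared to $\min_i \mathrm{diam}(g_i(B_i))$ and possibly compared to the separation of critical values of $g_i$ in $g_i(B_i)$; since there are finitely many disks and finitely many such geometric quantities, a single threshold $M_0$ works. Everything else is the routine size bookkeeping carried out above.
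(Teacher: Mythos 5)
The paper offers no proof of this lemma to compare against: it is stated as a reformulation of \cite[Theorem 4.2]{Nopal23} and used as a black box. Your self-contained argument is correct and is the natural one, so the comments below are only about tightening it. First, on $B_i$ you have $|w|\le\theta_i^M<1$ for $w=\bigl((z-a_i)/b_i\bigr)^M$, so the strong triangle inequality gives $|1-w|=1$ exactly; hence $|h_i(z)-1|=|w|\le\theta_i^M$ and $|h_i(z)|=1$ for every $M$, and the archimedean-style denominator $1-\theta_i^M$ is unnecessary. (Also a slip in the parenthetical: the chain should read $|z-a_i|<|b_i|<\rho_i\le|a_i-a_j|$, which then yields $|z-a_j|=|a_i-a_j|\ge\rho_j>|b_j|$.) Second, for the degree claim you do not need any control on critical values or on individual Newton-polygon slopes: writing $g_i(z)=\sum_n c_n(z-a_i)^n$ on $B_i=\overline D(a_i,R)$, the image is $\overline D(c_0,r)$ with $r=\max_{n\ge1}|c_n|R^n$, and $\deg_{B_i}g_i$ is the largest $n$ attaining this maximum; since the sup norm on a rational closed disk equals the Gauss norm $\max_n|c_n|R^n$, the single bound $\|F-g_i\|_{\sup,B_i}<r$ already forces the coefficients of $F$ to have the same absolute value as those of $g_i$ at every index where $|c_n|R^n=r$ and to stay strictly below $r$ elsewhere, so both the image disk and the Weierstrass degree are preserved. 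Thus the one threshold $\epsilon<\min_i\mathrm{diam}\bigl(g_i(B_i)\bigr)$ suffices. Finally, you should record two hypotheses your argument silently uses: hypothesis (ii) forces each $g_j$ to be pole-free, hence analytic, on $\cup_k B_k$ (and the poles of $h_j$ lie on the sphere $|z-a_j|=|b_j|$, which misses every $B_k$ by your size estimates), so $F$ is indeed given by a convergent power series on each $B_i$; and the conclusion $F(B_i)=g_i(B_i)$ tacitly requires $g_i(B_i)$ to be a nontrivial disk, i.e.\ $g_i$ nonconstant on $B_i$, which holds in the paper's application where the $g_i$ are linear.
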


Set $\overline{D}_0=\overline{D}(0,|\mathfrak{u}|)$ and define 
 $$f_0(z):\overline{D}_0\to\overline{D}_0$$
 to be the identity map. For $1\le i\le n$, write 
 $$f_i:=f|_{\overline{D}_i}.$$ 
Note that the maps $f_i, 0\le i\le n,$ are in fact well-defined on $\mathbb{C}_K$. For each $0\le i\le n$, pick $a_i\in \overline{D}_i$ and choose $x_i\in K$ such that $|x_i|=\mathrm{diam}(\overline{D}_i)$. Set 
$$b_i:=\frac{x_i}{\sqrt{\mathfrak{u}}}.$$
Then 
each $b_i$ is contained in the quadratic extension $K(\sqrt{\mathfrak{u}})$ of $K$. Let $M\ge 1$ be a sufficiently large even number to be determined later. 
 For each $0\le i\le n$, we consider 
$$h_i(z):=\frac{1}{1-\left(\frac{z-a_i}{b_i}\right)^M}.$$
and define 
$$F(z):=\sum_{i=0}^{n}f_i(z)h_i(z).$$
Since $M$ is even, we conclude that $h_i\in K(z)$ and hence $F\in K(z)$. 

\begin{proposition}\label{prop:F}
For a sufficiently large even integer $M\ge 1$, the map $F$ is a hyperbolic map such that the following hold:
\begin{enumerate}
    \item $\mathcal{J}(F,K)\not=\emptyset$ is compact.
    \item $\mathcal{J}(F,K)\subseteq\cup_{i=1}^n\overline{D}_i$ and $\mathcal{J}(F,K)\cap \overline{D}_i\not=\emptyset$ for each $1\le i\le n$.
    \item The adjacency matrix of $F$ on $\overline{D}_i,1\le i\le n,$ is $A$
\end{enumerate}
\end{proposition}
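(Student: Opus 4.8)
The plan is to apply Lemma~\ref{thm:surgery} to the pieces $f_0, f_1, \dots, f_n$ and then verify each of the three assertions by a perturbation argument, using that $F$ agrees with $f_i$ on $\overline{D}_i$ up to an error $\epsilon$ that we are free to shrink by enlarging $M$. First I would record the setup needed to invoke Lemma~\ref{thm:surgery}: the disks $\overline{D}_0, \overline{D}_1, \dots, \overline{D}_n$ are pairwise disjoint (by Lemma~\ref{lem:matrix-map}(1) together with $\overline{D}_0 = \overline{D}(0,|\mathfrak{u}|)$), each $f_i$ is linear hence sends $\overline{D}_i$ to a disk (condition (i)), and all the images $f_i(\cup_j \overline{D}_j)$ lie in the bounded set $\overline{D}(0,1)$ (condition (ii), using that $f$ was built to land inside $\overline{D}(0,1)$). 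The choice $b_i = x_i/\sqrt{\mathfrak{u}}$ ensures $\mathrm{diam}(\overline{D}_i) = |x_i| < |b_i| = |x_i|\cdot|\mathfrak{u}|^{-1/2} < \rho_i$ provided the disks are far enough apart, which we arrange in the construction of Lemma~\ref{lem:matrix-map}; and $M$ even forces $h_i \in K(z)$, so $F \in K(z)$. Then Lemma~\ref{thm:surgery} gives, for every $M$ large, that $F(\overline{D}_i) = f_i(\overline{D}_i)$ with $|F(z) - f_i(z)| < \epsilon$ on $\overline{D}_i$, and $\deg_{\overline{D}_i} F = \deg_{\overline{D}_i} f_i = 1$ for $1 \le i \le n$ (and $=1$ also on $\overline{D}_0$).

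Next I would analyze the dynamics on $\Omega := \cup_{i=0}^n \overline{D}_i$. On $\overline{D}_0$, since $f_0 = \mathrm{id}$ and $F$ is $\epsilon$-close to the identity with $F(\overline{D}_0) = \overline{D}_0$, the map $F|_{\overline{D}_0}$ is a bijection of $\overline{D}_0$ that is (for $\epsilon$ small) distance-nonexpanding — more precisely, one checks $F$ restricted to $\overline{D}_0$ is an isometry or at worst has all points non-repelling, so $\overline{D}_0$ contributes nothing to the Julia set; in fact I would argue $\overline{D}_0$ lies in the Fatou set. On each $\overline{D}_i$ with $1 \le i \le n$, the map $F|_{\overline{D}_i}$ is a bijection onto the disk $f_i(\overline{D}_i) = \iota\circ\kappa_A(i)$, which by construction (Lemma~\ref{lem:matrix-map}(2)) properly contains $\cup_{j \in \mathcal{I}_A(i)} \overline{D}_j$; since $f_i$ is linear with $|f_i'| = \mathrm{diam}(f_i(\overline{D}_i))/\mathrm{diam}(\overline{D}_i) > 1$ (the image disk is strictly larger), and $F$ is $\epsilon$-close to $f_i$, the map $F|_{\overline{D}_i}$ is a scaling map with expansion factor $>1$. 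Hence for $1 \le i, j \le n$ we get $\overline{D}_j \subseteq F(\overline{D}_i)$ if and only if $\overline{D}_j \subseteq f_i(\overline{D}_i)$ if and only if $a_{ij} = 1$; this is precisely assertion (3), that the transition matrix of $F$ on $\overline{D}_1, \dots, \overline{D}_n$ is $A$.

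For assertions (1) and (2): set $X := \cup_{i=1}^n \overline{D}_i$. The irreducibility of $A$ (part of admissibility, Definition~\ref{def:admissible}) together with the Markov property — each row of some power $A^{m_i}$ has at least two nonzero entries — shows that $F$ restricted to $X$ is a full expanding Markov system in which some iterate strictly expands every disk across at least two disks, so $\bigcap_{k\ge 0} F^{-k}(X)$ is a nonempty compact Cantor-type set carrying repelling periodic points, and I would identify $\mathcal{J}(F,K)$ with this intersection: points outside $\Omega$ escape to $\infty$ (or into $\overline{D}_0$) under forward iteration and lie in the Fatou set, points in $\overline{D}_0$ are in the Fatou set as above, and points in $X$ that eventually leave $X$ also land in the Fatou set, while points whose whole forward orbit stays in $X$ form the Julia set. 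Compactness is immediate since the intersection is a closed subset of the compact set $X$ (each $\overline{D}_i$ is compact, being a closed disk in the locally compact $K$ — here I use that $K$ is discretely valued with, say, finite or at least the relevant residue structure; if $\mathbb{P}^1(K)$ is not compact one still gets $X$ compact because it is a finite union of closed disks of positive radius in $K$, hmm, this needs the residue field finite or else one argues $X \cap \mathcal{J}$ is compact directly via the nested-intersection description). That $\mathcal{J}(F,K) \cap \overline{D}_i \ne \emptyset$ for each $i$ follows from irreducibility: for each $i$ there is a loop $i \to \cdots \to i$ in the transition graph of $A$, giving a periodic point of $F$ inside $\overline{D}_i$ which, by the strict expansion, is repelling. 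Finally, $\mathrm{Sing}(F) = \emptyset$: every point of $\mathcal{J}(F,K)$ has a neighborhood (a suitable $\overline{D}_i$ or subdisk thereof) on which $F$ is a scaling map, so $F$ is hyperbolic in the sense of the Definition in \S\ref{sec:map}.

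\textbf{Main obstacle.} The delicate point is pinning down $\mathcal{J}(F,K)$ exactly — in particular ruling out Julia points outside $X$ and inside $\overline{D}_0$, and confirming there are no "accidental" repelling cycles introduced by the rational interpolation $F$ away from the disks $\overline{D}_i$. Lemma~\ref{thm:surgery} controls $F$ only on the $B_i = \overline{D}_i$ themselves; the global behavior of $F$ on $\mathbb{P}^1(K) \setminus \Omega$ must be handled separately, presumably by showing that $F$ maps the complement of $\Omega$ into a region that is iterated into the Fatou set (using that the $h_i$ are close to $0$ or $1$ there and that $f_0,\dots,f_n$ are bounded). Controlling this, and the interplay with the non-compactness of $\mathbb{P}^1(K)$ when the residue field is infinite, is where the real work lies; everything on the disks $\overline{D}_i$ is a routine scaling-map computation once the $\epsilon$-closeness is in hand.
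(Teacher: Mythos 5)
Your overall architecture matches the paper's: invoke Lemma~\ref{thm:surgery} to get $F$ close to each $f_i$ on $\overline{D}_i$ with $\deg_{\overline{D}_i}F=1$, read off the transition matrix and hyperbolicity from the scaling behaviour on the disks, and locate the Julia set by showing everything outside $\cup_{i=1}^n\overline{D}_i$ falls into the Fatou set. However, the step you flag as the ``main obstacle'' is a genuine gap in your write-up, and it is exactly the content of the paper's Lemma~\ref{lem:M}: one must prove that $F$ maps all of $\mathbb{P}^1(K)\setminus(\cup_{i=1}^n\overline{D}_i)$ into $\overline{D}_0$. Lemma~\ref{thm:surgery} says nothing off the disks $B_i$, so this requires a direct estimate of $|f_i(z_0)h_i(z_0)|$ for $z_0$ outside the disks. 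Your heuristic ``$h_i$ is close to $0$ there'' is not enough by itself, because $f_i(z_0)=\alpha_i z_0+\beta_i$ is unbounded; the paper's proof splits into the cases $|\alpha_i z_0|\le|\beta_i|$ and $|\alpha_i z_0|>|\beta_i|$ (and, within the latter, $|z_0|>|a_i|$, $|z_0|<|a_i|$, $|z_0|=|a_i|$) to show that the linear growth of $f_i$ is beaten by the decay $|b_i/(z_0-a_i)|^M\le|\sqrt{\mathfrak{u}}|^M$, yielding $|F(z_0)|\le|\mathfrak{u}|$ for $M$ large. Once this is in place, $F(\overline{D}_0)=\overline{D}_0$ with $f_0=\mathrm{id}$ puts $\overline{D}_0$ in the Fatou set and hence the whole complement of $\cup_{i=1}^n\overline{D}_i$ as well, which is what pins down $\mathcal{J}(F,K)\subseteq\cup_{i=1}^n\overline{D}_i$.

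Two smaller points. First, your claim that each $F|_{\overline{D}_i}$ is a scaling with factor $>1$ because ``the image disk is strictly larger'' is not justified by the construction in Lemma~\ref{lem:matrix-map}: the image $\iota\circ\kappa_A(i)$ need not have larger diameter than $\overline{D}_i$ in a single step, since the disks are arranged hierarchically with no size constraint of that kind. What is true, and what the proofs actually use, is eventual expansion coming from the Markov property of $A$ (some iterate of each disk covers at least two disjoint disks, forcing the composed scaling factor above $1$); this is what drives the shrinking-preimage argument in Lemma~\ref{lem:F-J} and makes the periodic points repelling. Second, for statement (2) the paper does not appeal to repelling cycles in each $\overline{D}_i$ directly but exhibits non-equicontinuity: using Lemma~\ref{lem:matrix-map}(2), part of $F(\overline{D}_{i_0})$ escapes into $\overline{D}_0$ while part stays, producing points at distance $\ge 1$ after two iterates, and pulling this back along shrinking branches. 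Your route via a repelling cycle in each disk (from irreducibility) is workable, but only after the eventual-expansion point above is established; as written, the one-step expansion claim it rests on is false in general.
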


The proof of Proposition \ref{prop:F} is provided after we state the following two lemmas on the Fatou set and Julia set of $F$. 
\begin{lemma}\label{lem:M}
There exists a sufficiently large even integer $M\ge 1$ such that 
 $$F(\mathbb{P}^1(K)\setminus(\cup_{i=1}^n \overline{D}_i))\subseteq\overline{D}_0.$$
\end{lemma}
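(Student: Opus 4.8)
The plan is to prove the inclusion by a direct ultrametric estimate on $F$, exploiting the special choice of the parameters $b_i$. The decisive numerical facts are: each $\overline{D}_i$ ($0\le i\le n$) is a rational disk, so $\mathrm{diam}(\overline{D}_i)\in|K\setminus\{0\}|=|\mathfrak{u}|^{\mathbb{Z}}$; and $|b_i|=\mathrm{diam}(\overline{D}_i)\cdot|\mathfrak{u}|^{-1/2}$, which lies strictly between $\mathrm{diam}(\overline{D}_i)$ and $\mathrm{diam}(\overline{D}_i)\cdot|\mathfrak{u}|^{-1}$ and in particular does \emph{not} lie in $|K\setminus\{0\}|$. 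Writing $f_i(z)=u_iz+v_i$ for $0\le i\le n$ (so $u_0=1$, $v_0=0$), I set $C:=\max\{1,|u_1|,|v_1|,\dots,|u_n|,|v_n|\}$, a constant depending only on the linear pieces $f_i$, not on $M$ nor on $z$. Since each $\overline{D}_i\subsetneq\overline{D}(0,1)$, one also has $|b_i|\le|\mathfrak{u}|^{1/2}<1$, with equality $|b_0|=|\mathfrak{u}|^{1/2}$ for $i=0$.

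First I would record two elementary estimates for the factors $h_i$ on $\mathbb{P}^1(K)$. For $z\in K$ the value $|z-a_i|$ lies in $|K\setminus\{0\}|\cup\{0\}$ while $|b_i|$ does not, so $|z-a_i|\ne|b_i|$ always; by the strong triangle inequality, $|h_i(z)|=1$ when $|z-a_i|<|b_i|$, and $|h_i(z)|=|b_i/(z-a_i)|^{M}$ when $|z-a_i|>|b_i|$. In particular the poles of $h_i$, located where $|z-a_i|=|b_i|$, hence those of $F$, avoid $\mathbb{P}^1(K)$, and $F(\infty)=0$ once $M\ge 2$. Next, if $z\notin\overline{D}_i$ then $|z-a_i|>\mathrm{diam}(\overline{D}_i)$, so by discreteness of the value group $|z-a_i|\ge\mathrm{diam}(\overline{D}_i)\cdot|\mathfrak{u}|^{-1}>|b_i|$ and $|b_i/(z-a_i)|\le|\mathfrak{u}|^{1/2}$, whence $|h_i(z)|\le|\mathfrak{u}|^{M/2}$; and if $z\in\overline{D}_0=\overline{D}(0,|\mathfrak{u}|)$ then $|z|\le|\mathfrak{u}|<|\mathfrak{u}|^{1/2}=|b_0|$, so $|h_0(z)|=1$.

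Then I would split $\mathbb{P}^1(K)\setminus\bigcup_{i=1}^n\overline{D}_i$ into the disjoint pieces $\overline{D}_0$ and $\mathbb{P}^1(K)\setminus\bigcup_{i=0}^n\overline{D}_i$. On the second piece every index satisfies $z\notin\overline{D}_i$, so $|h_i(z)|\le|\mathfrak{u}|^{M/2}$ for all $i$; bounding $|f_i(z)|$ crudely ($|f_i(z)|\le C$ when $|z|\le 1$, while when $|z|>1$ the factor $|z|^{-M}$ in $|h_i(z)|=|b_i|^M|z|^{-M}$ more than absorbs the growth of $|f_i(z)|$) one gets $|f_i(z)h_i(z)|\le C|\mathfrak{u}|^{M/2}$ for each $i$, hence $|F(z)|\le C|\mathfrak{u}|^{M/2}$ by the strong triangle inequality; this also covers $z=\infty$, where $F(\infty)=0$. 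On the piece $\overline{D}_0$ one has $|f_0(z)h_0(z)|=|z|\le|\mathfrak{u}|$, while for $i\ge 1$ the disks $\overline{D}_0$ and $\overline{D}_i$ are disjoint, so $z\notin\overline{D}_i$ and the previous bound gives $|f_i(z)h_i(z)|\le C|\mathfrak{u}|^{M/2}$; thus $|F(z)|\le\max\{|\mathfrak{u}|,\,C|\mathfrak{u}|^{M/2}\}$. In both cases, choosing the even integer $M$ large enough that $C|\mathfrak{u}|^{M/2}\le|\mathfrak{u}|$, i.e. $M\ge 2+2\log C/\log(1/|\mathfrak{u}|)$, forces $|F(z)|\le|\mathfrak{u}|$, that is $F(z)\in\overline{D}_0$; this proves the lemma.

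I do not expect a genuine obstacle here: the whole content is packed into the choice $|b_i|=\mathrm{diam}(\overline{D}_i)\cdot|\mathfrak{u}|^{-1/2}$, a half-integral power of $|\mathfrak{u}|$, which simultaneously makes the pole locus $\{|z-a_i|=|b_i|\}$ of $h_i$ miss $\mathbb{P}^1(K)$ and produces the uniform gap $|h_i(z)|\le|\mathfrak{u}|^{M/2}$ off $\overline{D}_i$; once this decaying quantity dominates the fixed constant $C$ coming from the linear pieces, the estimate closes. The only points needing a little care are the behaviour at $z=\infty$ and for $|z|>1$, and verifying that $C$ is truly independent of $M$.
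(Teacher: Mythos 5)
Your proof is correct and follows essentially the same route as the paper's: a term-by-term ultrametric estimate on $F=\sum f_ih_i$ driven by the half-integral choice $|b_i|=\mathrm{diam}(\overline{D}_i)\,|\mathfrak{u}|^{-1/2}$, which yields $|h_i(z)|\le|\mathfrak{u}|^{M/2}$ off $\overline{D}_i$ and lets the decay in $M$ beat the fixed constants from the linear pieces. Your version is organized slightly more cleanly (explicit treatment of the $i=0$ term and of $\overline{D}_0$ itself, and the discreteness step $|z-a_i|\ge\mathrm{diam}(\overline{D}_i)\cdot|\mathfrak{u}|^{-1}$ made explicit where the paper leaves it implicit), but the underlying argument is the same.
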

\begin{proof}
Note that $F(\infty)=0\in\overline{D}_0$. Now let us pick $z_0\in K\setminus(\cup_{i=1}^n \overline{D}_i)$ and estimate $|f_i(z_0)h_i(z_0)|$ for each $1\le i\le n$. Since $f_i$ is linear, we can write $f_i(z)=\alpha_iz+\beta_i$ for any $z\in\mathbb{C}_K$ for some $\alpha_i\in K\setminus\{0\}$ and $\beta_i\in K$. It follows that 
$$f_i(z_0)h_i(z_0)=\frac{\alpha_iz_0+\beta_i}{1-\left(\frac{z_0-a_i}{b_i}\right)^M}.$$
From the choice of $b_i$, we have that 
$$\left|\frac{z_0-a_i}{b_i}\right|\ge\frac{1}{|\sqrt{\mathfrak{u}}|}>1.$$
Thus 
$$|f_i(z_0)h_i(z_0)|=|\alpha_iz_0+\beta_i|\left|\frac{b_i}{z_0-a_i}\right|^M.$$

If $|\alpha_iz_0|\le|\beta_i|$, then  
$$|f_i(z_0)h_i(z_0)|\le|\beta_i||\sqrt{\mathfrak{u}}|^M.$$
Thus there exists $M_i^{(0)}\ge 1$ such that $|f_i(z_0)h_i(z_0)|\le |\mathfrak{u}|$ for any $M\ge M_i^{(0)}$.

We now consider the case where $|\alpha_iz_0|> |\beta_i|$. Then 
$$|f_i(z_0)h_i(z_0)|\le|\alpha_iz_0|\left|\frac{b_i}{z_0-a_i}\right|^M=\left|\frac{\alpha_iz_0}{z_0-a_i}\right| |b_i| \left|\frac{b_i}{z_0-a_i}\right|^{M-1}\le \left|\frac{\alpha_iz_0}{z_0-a_i}\right| |b_i||\sqrt{\mathfrak{u}}|^{M-1}.$$
If $|z_0|>|a_i|$, then $|z_0-a_i|\le |z_0|$ and hence 
$$|f_i(z_0)h_i(z_0)|\le|\alpha_ib_i||\sqrt{\mathfrak{u}}|^{M-1}.$$
If $|z_0|<|a_i|$, then $|z_0-a_i|=|a_i|$ and hence $|(\alpha_iz_0)/(z_0-a_i)|<|\alpha_i|$; so we also obtain that 
$$|f_i(z_0)h_i(z_0)|\le|\alpha_ib_i||\sqrt{\mathfrak{u}}|^{M-1}.$$
If $|z_0|=|a_i|$, since $z_0\not\in\overline{D}_i$, we have that $|z_0-a_i|\ge\mathrm{diam}(\overline{D}_i)$; so we obtain that 
$$|f_i(z_0)h_i(z_0)|\le\frac{\left|\alpha_ia_ib_i\right|}{\mathrm{diam}(\overline{D}_i)}|\sqrt{\mathfrak{u}}|^{M-1}.$$
Thus in this case, there exist $M_i^{(1)}\ge 1$ such that $|f_i(z_0)h_i(z_0)|\le |\mathfrak{u}|$ for any $M\ge M_i^{(1)}$.

Set $M_i=\max\{ M_i^{(0)},  M_i^{(1)}\}$. Then $|f_i(z_0)h_i(z_0)|\le |\mathfrak{u}|$ for any $M\ge M_i$. Now we take 
$$M\ge\max\{M_i:1\le i\le n\}$$
to be an even integer and conclude that, for any $z_0\in\mathbb{P}^1(K)\setminus(\cup_{i=1}^n \overline{D}_i)$, 
$$|F(z_0)|\le\max\{|f_i(z_0)h_i(z_0)|:1\le i\le n\}\le|\mathfrak{u}|.$$
Thus the conclusion holds. 
\end{proof}

We now pick $M$ as in Lemma \ref{lem:M}. Since  $F(\overline{D}_0)=f_0(\overline{D}_0)=\overline{D}_0$, we have that $\overline{D}_0$  is contained in the Fatou set of $F$. Then $\mathbb{P}^1(K)\setminus(\cup_{i=1}^n \overline{D}_i)$ is also contained in the Fatou set of $F$ by Lemma \ref{lem:M}. Hence 
$$\mathcal{J}(F,K)\subseteq\cup_{i=1}^n \overline{D}_i.$$

\begin{lemma}\label{lem:F-J}
The Julia set $\mathcal{J}(F,K)$ intersects $\overline{D}_i$ for each $1\le i\le n$.
\end{lemma}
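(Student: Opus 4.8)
The plan is to exhibit, for each $1\le i\le n$, a repelling periodic point of $F$ lying in $\overline{D}_i$; since repelling periodic points belong to $\mathcal{J}(F,K)$, this proves the lemma. First recall what the construction already gives us. Applying Lemma~\ref{thm:surgery} with $g_i=f_i$ — which is linear, hence of local degree $1$ — for $M$ large enough we have $F(\overline{D}_i)=f_i(\overline{D}_i)=f(\overline{D}_i)$ and $\deg_{\overline{D}_i}F=1$ for every $1\le i\le n$. Thus $F$ is injective on $\overline{D}_i$, and (being an injective $K$-rational map on a $K$-disk) it restricts to a scaling map $F|_{\overline{D}_i}\colon\overline{D}_i\to f(\overline{D}_i)$ with $K$-rational inverse on the image. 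Moreover, by Lemma~\ref{lem:matrix-map}(2) together with Definition~\ref{def:compatible}, whenever $a_{ij}=1$ (equivalently $j\in\mathcal{I}_A(i)$) one has the \emph{strict} inclusion $\overline{D}_j\subsetneq f(\overline{D}_i)=F(\overline{D}_i)$.

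Now fix $i$ and set $i_0=i$. Since $A$ is irreducible, there is a closed walk $i_0\to i_1\to\cdots\to i_k=i_0$ in the transition graph of $A$ with $k\ge 1$, i.e. $a_{i_ji_{j+1}}=1$ for $0\le j\le k-1$. Put $W_k:=\overline{D}_{i_0}$ and define disks $W_j:=(F|_{\overline{D}_{i_j}})^{-1}(W_{j+1})$ for $j=k-1,\dots,0$, descending. Each step is legitimate and yields $W_j\subsetneq\overline{D}_{i_j}$: inductively, $W_{j+1}\subseteq\overline{D}_{i_{j+1}}\subsetneq F(\overline{D}_{i_j})$ because $a_{i_ji_{j+1}}=1$, so the preimage under the scaling bijection $F|_{\overline{D}_{i_j}}$ is a disk strictly inside $\overline{D}_{i_j}$; in particular $W_0\subsetneq\overline{D}_{i_0}$. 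By construction $F(W_j)=W_{j+1}$, so $F^{\circ k}$ restricts to a scaling bijection $W_0\to\overline{D}_{i_0}$, and since its image $\overline{D}_{i_0}$ strictly contains $W_0$ its scaling ratio is $>1$.

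Finally, the inverse branch $G:=(F^{\circ k}|_{W_0})^{-1}\colon\overline{D}_{i_0}\to W_0\subsetneq\overline{D}_{i_0}$ is a scaling map of ratio $<1$ defined over $K$; as $\overline{D}_{i_0}\subseteq\mathbb{P}^1(K)$ is complete, the nested disks $G^{\circ m}(\overline{D}_{i_0})$ have diameters tending to $0$ and shrink to a single point $p\in W_0\subseteq K$ with $G(p)=p$, hence $F^{\circ k}(p)=p$. The orbit $p,\ F(p)\in W_1,\ \dots,\ F^{\circ(k-1)}(p)\in W_{k-1}$ stays inside $\bigcup_{j}\overline{D}_j$, on each piece of which $F$ is a scaling map, and $F^{\circ k}$ strictly expands the small disks around $p$; the same then holds for $F^{\circ k'}$, where $k'\mid k$ is the exact period of $p$. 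Therefore $p\in\mathrm{Fix}^\ast(F^{\circ k'})\subseteq\mathcal{J}(F,K)$ and $p\in\overline{D}_{i_0}=\overline{D}_i$, which is what we wanted. The only delicate point is the identification of $F|_{\overline{D}_i}$ with a scaling map onto $f(\overline{D}_i)$; this rests on $\deg_{\overline{D}_i}F=1$ from Lemma~\ref{thm:surgery} together with the standard fact that a rational map injective on a disk acts there as a scaling map, everything else being a routine nested-preimage argument using completeness.
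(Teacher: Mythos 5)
Your proof is correct, but it takes a genuinely different route from the paper's. You build, for each $i$, a repelling periodic point inside $\overline{D}_i$: irreducibility of $A$ gives a closed walk through $i$ in the transition graph, Lemma~\ref{lem:matrix-map}(2) gives the strict inclusions $\overline{D}_{i_{j+1}}\subsetneq F(\overline{D}_{i_j})$, and pulling back along the walk produces a disk $W_0\subsetneq\overline{D}_i$ on which $F^{\circ k}$ is a scaling bijection onto $\overline{D}_i$ with ratio $>1$; the contracting inverse branch then has a fixed point by completeness. The paper instead argues directly with the definition of $\mathcal{J}(F,K)$ as the non-equicontinuity locus: using Lemma~\ref{lem:matrix-map}(2) and Lemma~\ref{lem:M} it finds two points of $\overline{D}_{i_0}$ whose second images are at distance $\ge 1$, and then uses the Markov property to pull this pair back into arbitrarily small preimage disks $\overline{D}_{j,m}\subseteq\overline{D}_j$, exhibiting failure of equicontinuity on every $\overline{D}_j$. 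Your version is more constructive (it exhibits actual repelling cycles, foreshadowing the conjugacy with $(\Sigma_A,\sigma)$), at the cost of invoking two standard facts that you should at least cite: that a degree-one analytic bijection between closed disks over a non-archimedean field is a scaling map, and that a repelling periodic point lies in the equicontinuity-defined Julia set $\mathcal{J}(F,K)$ (your nested disks $G^{\circ m}(\overline{D}_i)$, which shrink to $p$ while $F^{\circ km}$ maps them onto all of $\overline{D}_i$, in fact already prove the latter, so it would be worth saying so explicitly). The paper's version avoids both facts but proves slightly less, namely nonemptiness of $\mathcal{J}(F,K)\cap\overline{D}_i$ without locating a periodic point there.
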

\begin{proof}
Pick any $1\le i_0\le n$. By Lemma \ref{lem:matrix-map} (2), we have that $F(\overline{D}_{i_0})\setminus(\cup_{i=1}^n \overline{D}_i)\not=\emptyset$.
Then by Lemma \ref{lem:M}, we conclude that 
$$F^{\circ 2}(\overline{D}_{i_0})\cap\overline{D}_0\not=\emptyset.$$
Again by Lemma \ref{lem:matrix-map} (2), we have that
$$F^{\circ 2}(\overline{D}_{i_0})\cap(\cup_{i=1}^n \overline{D}_i)\not=\emptyset.$$
Thus there exist $x_{i_0},y_{i_0}\in \overline{D}_{i_0}$ such that $|F^{\circ 2}(x_{i_0})- F^{\circ 2}(y_{i_0})|\ge 1$. 

Now given $1\le j\le n$, for any $m\ge 1$, consider a closed disk $\overline{D}_{j_m}\subseteq F^{\circ m}(\overline{D}_j)$ with $1\le j_m\le n$. Let $\overline{D}_{j,m}\subseteq\overline{D}_j$ be a closed disk where $F^{\circ m}$ is bijective to $\overline{D}_{j_m}$. The existence of such a $\overline{D}_{j,m}$ follows from the fact that $F$ is linear in each $\overline{D}_i$ for $1\le i\le n$. Then passing to a subsequence in $m$, we obtain that as $m\to\infty$,
$$\mathrm{diam}(\overline{D}_{j,m})\to 0.$$
Consider $F^{\circ m}$-preimages $x_{j,m},y_{j,m}$ in $\overline{D}_{j,m}$ for the points $x_{j_m}, y_{j_m}$ in $\overline{D}_{j_m}$, respectively. We conclude that, as $m\to\infty$, 
$$|x_{j,m}-y_{j,m}|\to 0,$$
but 
$$|F^{\circ {(m+2)}}(x_{j,m})-F^{\circ {(m+2)}}(y_{j,m})|=|F^{\circ 2}(x_{j_m})-F^{\circ 2}(y_{j_m})|\ge 1.$$
Thus $\{F^{\circ n}\}_{n\ge 1}$ is not equicontinuous in $\overline{D}_j$, equivalently, $\mathcal{J}(F,K)\cap\overline{D}_j\not=\emptyset$.  
 Hence the conclusion holds.
\end{proof}

\begin{proof}[Proof of Proposition \ref{prop:F}]
By Lemmas \ref{lem:matrix-map} and \ref{lem:F-J}, it suffices to check the hyperbolicity of  $F$. By Lemma \ref{thm:surgery} (2) and the linearity of $f_i, 1\le i\le n$, we conclude that in each $\overline{D}_i$ the map $F$ has degree $1$ and hence has no critical points. So $F$ has no critical points in $\mathcal{J}(F,K)$. Thus, $F$ is hyperbolic. 
\end{proof}

\begin{proof}[Proof of Theorem \ref{main}]
Let $\lambda\ge 1$ be a weak Perron number. If $\lambda=1$, we consider a rational map $\phi\in K(z)$ of degree at least $2$ having a good reduction. Then $J(\phi,K)=\emptyset$ and hence $\phi$ is a hyperbolic map with $\mathfrak{h}(\phi,K)=0=\log 1$. If $\lambda>1$, by Proposition \ref{prop:perron-matrix}, we consider an admissible $(0,1)$-matrix $A$ with leading eigenvalue $\lambda$. Then the map $\phi:=F$ in Proposition \ref{prop:F} is a hyperbolic rational map in $K(z)$. Moreover,  by Proposition \ref{prop:finite}, $(\mathcal{J}(\phi,K),\phi)$ is topologically conjugate to $(\Sigma_A,\sigma)$, which implies that $\mathfrak{h}(\phi,K)=\log\lambda$. 

Conversely, let $\phi\in K(z)$ be a subhyperbolic rational map in $K(z)$ with compact $\mathcal{J}(\phi,K)$ such that $\mathfrak{h}(\phi,K)\not=0$. Then there exist a matrix $B$ with non-negative entries and finitely many numbers $\ell_1,\dots,\ell_{m_0}$ such that 
$$\zeta_\phi(K,t)=\frac{\prod_{m=1}^{m_0}(1-t^{\ell_m})}{\det(I-tB)},$$
see \S\ref{sec:pr}. Noting that  the numerator and denominator of $\zeta_\phi(K,t)$ have no common zeros differing from roots of unity, we conclude that the reciprocal of the convergence radius of $\zeta_\phi(K,t)$ is a weak Perron number $\lambda_B>1$; in fact, $\lambda_B$ is the  leading eigenvalue of $B$. Hence $\mathfrak{h}(\phi,K)=\log\lambda_B$. 
\end{proof}

\section{Examples}\label{sec:example}
In this section, we compute concrete examples for the Artin-Mazur zeta function and the entropy. 

\subsection{Hyperbolic rational maps with prescribed entropy}
We first apply our construction in Subsection~\ref{sec:entropy} to give two  hyperbolic rational maps with prescribed entropies. 

\begin{example}[A hyperbolic rational map $\phi\in\mathbb{Q}_2(z)$ with $\exp(\mathfrak{h}(\phi,\mathbb{Q}_2)=(1+\sqrt{5})/2$]\label{ex:mapA}
Let $A=\begin{bmatrix}
   1&1\\
    1&0
    \end{bmatrix}.$
    Then $A$ is an admissible (0,1)-matrix with leading eigenvalue $(1+\sqrt 5)/2$. Let us consider the disks $\overline{D}(1,|8|)$ and $\overline{D}(3,|4|)$ in $\mathbb{Q}_2$, and define the maps 
    \begin{itemize}
    \item $f_1:\overline{D}(1,|8|)\to \overline{D}(1,|2|)$, sending $z\in \overline{D}(1,|8|)$ to $(z+3)/4\in\overline{D}(1,|2|)$, and 
    \item  $f_2:\overline{D}(3,|4|)\to \overline{D}(1,|4|)$, sending $z\in \overline{D}(3,|4|)$ to $z-2\in\overline{D}(1,|4|)$. 
    \end{itemize}
    Moreover, let  $f_0:\overline{D}(0,|2|)\to \overline{D}(0,|2|)$ be the identity map. See Figure \ref{f:10}. 
     \begin{figure}[h]
     \includegraphics[width=0.5\textwidth]{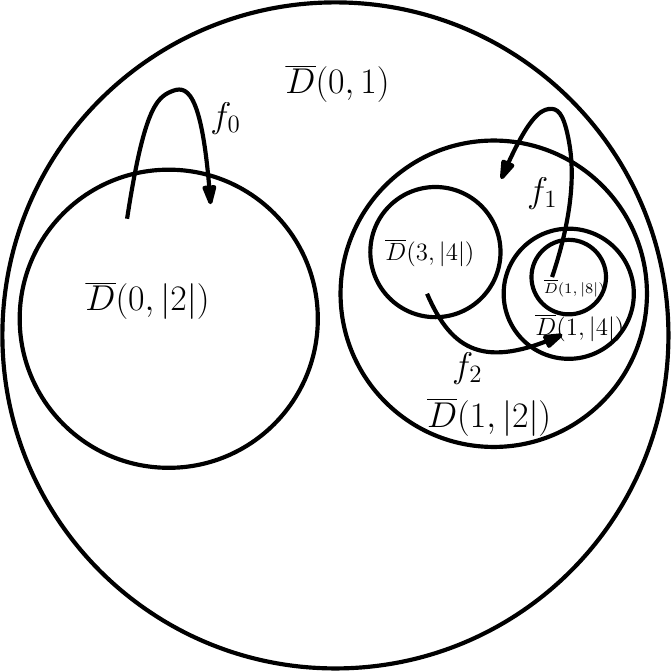}
	\caption{The linear maps $f_1,f_2$ and $f_0$ in Example \ref{ex:mapA}.}
    \label{f:10}
\end{figure}
Computing a large even integer $M$ as in the proof of Lemma \ref{lem:M}, we can take $M=14$.  Then we obtain a desired hyperbolic rational map
$\phi\in \mathbb{Q}_2(z)$ defined by 
\begin{align*}
\phi(z)&=\frac{f_1(z)}{1-\left(\frac{z-1}{4\sqrt{2}}\right)^{14}}+\frac{f_2(z)}{1-\left(\frac{z-3}{2\sqrt{2}}\right)^{14}}+\frac{f_0(z)}{1-\left(\frac{z}{\sqrt{2}}\right)^{14}}\\
&=\frac{2^{33}(z+3)}{2^{35}-(z-1)^{14}}+\frac{2^{21}(z-2)}{2^{21}-(z-3)^{14}}+\frac{2^7z}{2^7-z^{14}}.
\end{align*}
\end{example}

 \begin{example}[A hyperbolic rational map $\phi\in\mathbb{Q}_2(z)$ with $\mathfrak{h}(\phi,\mathbb{Q}_2)=0$ and $\mathcal{J}(\phi,\mathbb{Q}_2)\not=\emptyset$]\label{ex:mapB}
 Let $A=\begin{bmatrix}
   0&1\\
    1&0
    \end{bmatrix}.$
    Then $A$ is an admissible (0,1)-matrix with leading eigenvalue $1$. Let us consider the disks $\overline{D}(1,|8|)$ and $\overline{D}(3,|8|)$ in $\mathbb{Q}_2$, and define the maps 
    \begin{itemize}
    \item $f_1:\overline{D}(1,|8|)\to \overline{D}(3,|4|)$, sending $z\in \overline{D}(1,|8|)$ to $(z+5)/2\in\overline{D}(3,|4|)$, and 
    \item  $f_2:\overline{D}(3,|8|)\to \overline{D}(1,|4|)$, sending $z\in \overline{D}(3,|8|)$ to $(z-1)/2\in\overline{D}(1,|4|)$. 
    \end{itemize}
    Moreover, let  $f_0:\overline{D}(0,|2|)\to \overline{D}(0,|2|)$ be the identity map. See Figure \ref{f:20}. 
      \begin{figure}[h]
     \includegraphics[width=0.5\textwidth]{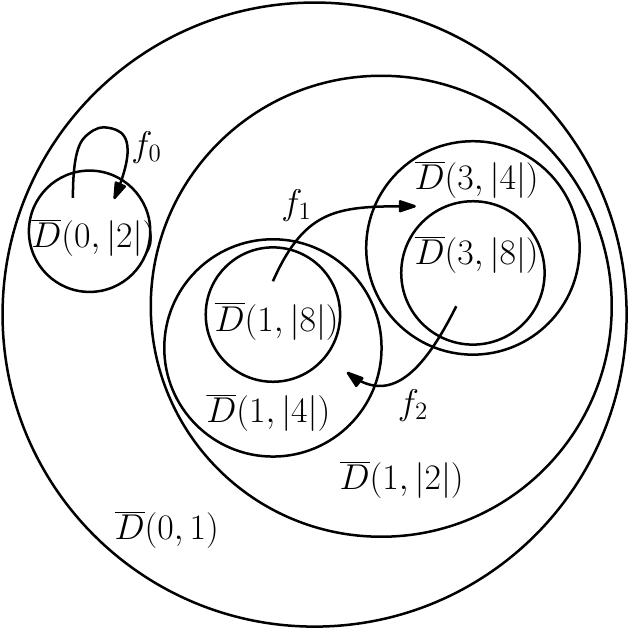}
	\caption{The linear maps $f_1,f_2$ and $f_0$ in Example \ref{ex:mapB}.}
    \label{f:20}
\end{figure}
    Computing a large even integer $M$ as in the proof of Lemma \ref{lem:M}, we can take $M=6$.  Then we obtain a  hyperbolic rational map
$\phi\in \mathbb{Q}_2(z)$ defined by 
\begin{align*}
\phi(z)&=\frac{f_1(z)}{1-\left(\frac{z-1}{4\sqrt{2}}\right)^{6}}+\frac{f_2(z)}{1-\left(\frac{z-3}{4\sqrt{2}}\right)^{6}}+\frac{f_0(z)}{1-\left(\frac{z}{\sqrt{2}}\right)^{6}}\\
&=\frac{2^{14}(z+5)}{2^{15}-(z-1)^{6}}+\frac{2^{14}(z-1)}{2^{15}-(z-3)^{6}}+\frac{2^3z}{2^3-z^{6}}.
\end{align*}
In $\overline{D}(1,|8|)$, since $\phi(\overline{D}(3,|4|)\setminus\overline{D}(3,|8|))\subseteq \overline{D}(0,|2|)$, we have that 
$$\overline{D}(1,|8|)\subsetneq\phi^{\circ 2}(\overline{D}(1,|8|))\subseteq \overline{D}(1,|4|)\cup\overline{D}(0,|2|).$$
Note that $\phi^{\circ 2}$ is a scaling map in $\overline{D}:=\phi^{-2}(\overline{D}(1,|4|))\cap \overline{D}(1,|8|)$. Thus $\overline{D}$ contains a fixed point, say $x_0$, of $\phi^{\circ 2}$. 
Since $$\phi^{\circ 2}(\overline{D}(1,|4|)\setminus\overline{D}(1,|8|))\subseteq \overline{D}(0,|2|),$$ we conclude that any point in $\overline{D}(1,|8|)\setminus\{x_0\}$ is eventually mapped into $\overline{D}(0,|2|)$ under $\phi$ and hence is contained in $\mathcal{F}(\phi,\mathbb{Q}_2)$. Thus $\mathcal{J}(\phi,\mathbb{Q}_2)$ coincides with the two-cycle $\{x_0,\phi(x_0)\}$.

\end{example}

\subsection{Artin-Mazur zeta function for polynomials} 

Given any prime number $p\ge 2$, let $K=\widehat{\mathbb{Q}_p^{un}}$ be the completion of the maximal unramified extension of $\mathbb{Q}_p$. Consider the polynomial $\phi\in K[z]$, acting as a self-map on $\mathbb{P}^1(K)$, defined by 
$$\phi(z)=\frac{z(z-1)^2}{8p}.$$
We have the following observations:
\begin{itemize}
\item the critical points of $\phi$ in $K$ are $1$ and $1/3$;
\item the point $0$ is a repelling fixed point and hence $0\in\mathcal{J}(\phi, K)$; 
\item $\phi(1)=0$ and hence $1\in\mathcal{J}(\phi, K)$; 
\item $\phi^{\circ n}(1/3)\to\infty$, as $n\to\infty$, and hence $1/3\in\mathcal{F}(\phi,K)$; and 
\item $\phi^{\circ n}(z)\to\infty$, as $n\to\infty$, for any $z\not\in D(0,1)\cup D(1,1)$. 
\end{itemize}
In particular, $\phi$ is subhyperbolic. Moreover, $\mathcal{J}(\phi,K)$ is compact by \cite[Theorem 4.8]{Hsia96}. 

We now compute $\zeta_{\phi}(K,t)$ and $\mathfrak{h}(\phi,K)$ depending on whether $p$ is odd or even. In the former case, the critical point $1$ is tame, and in the latter case, the critical point $1$ is wild. 

\textbf{Case 1: $p\not=2$.}
For each $n\ge 1$, consider the sets $A_n:=\overline D(0,|p^{n}|)\setminus D(0,|p^{n}|)$ and $\tilde A_n:=\overline D(1,|p^{n}|)\setminus D(1,|p^{n}|)$ 

In $A_n$, since $\phi(\overline D(0,|p|))=\overline{D}(0,1)=\overline{D}(1,1)$ and $\phi$ has degree $1$ on $\overline D(0,|p|)$, we conclude that
\begin{itemize}
\item  in $A_1$, there is unique closed disk $\overline D_1$ such that $\phi(\overline D_1)=\overline D(1,|p|)$; 
\item for each $n\ge 2$, in $A_n$,  there is unique closed disk $\overline D_n$ such that $\phi(\overline D_n)=\overline D_{n-1}$;
\item for each $n\ge 1$, $\phi^{\circ (n+1)}(A_n\setminus\overline D_n)=K\setminus(D(0,1)\cup D(1,1))$ and hence $A_n\setminus D_n\subseteq\mathcal{F}(\phi,K)$. 
\end{itemize}
It follows that 
$$D(0,1)\cap\mathcal{J}(\phi,K)\subseteq\bigcup_{n\ge 1}\overline D_n\cup\{0\}.$$
In $\tilde A_n$, since $\phi(\overline{D}(1,|p^{n}|))=\overline{D}(1,|p^{2n-1}|)$ and $\phi$ has degree $2$ on $\overline D(1,|p^{2n}|)$, we conclude that 
\begin{itemize}
\item for each $n\ge 1$, in $\tilde A_n$,  there are exactly two distinct  closed disk $\overline B_n^{+}$ and $\overline B_n^{-}$ such that $\phi(\overline B_n^{\pm})=\overline D_{2n-1}$. 
\end{itemize}
It follows that 
$$D(1,1)\cap\mathcal{J}(\phi,K)\subseteq\bigcup_{n\ge 1}(\overline B_n^+\cup \overline B_n^-)\cup\{1\}.$$
Thus 
$$\mathcal{J}(\phi,K)\subseteq\bigcup_{n\ge 1}(\overline D_n\cup\overline B_n^+\cup \overline B_n^-)\cup\{1\}\cup\{0\}.$$
Moreover, we obtain countable symbols  $\overline{D}_n$, $\overline B_n^+$ and $\overline B_n^-$ (see Figure \ref{f:2}).
   \begin{figure}[h]
     \includegraphics[width=0.8\textwidth]{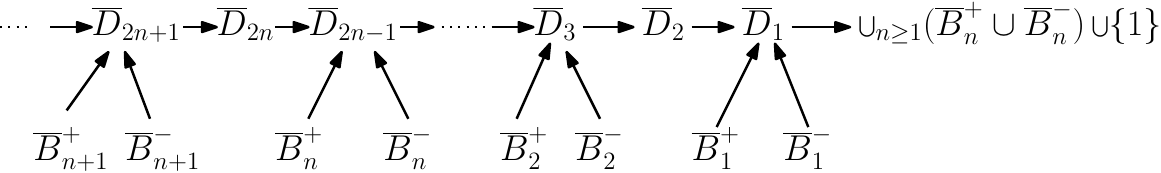}
	\caption{The portrait of $\overline{D}_n$, $\overline B_n^+$ and $\overline B_n^-$ under $\phi$.}
    \label{f:2}
\end{figure}

We now identify $\overline{D}_{2n-1},n\ge1$ as a symbol $\alpha^+$, $\overline{D}_{2n},n\ge1$ as a symbol $\alpha^-$, $\overline{B}_n^+,n\ge1$ as a symbol $\beta^+$, and $\overline{B}_n^-,n\ge1$ as a symbol $\beta^-$,  exchanging the disks $\overline{B}_n^-$ and $\overline{B}_n^+$ for some $n$ if necessary. Then adding the critical point $\{1\}$ and the critical value $\{0\}$, we obtain a finite graph (see Figure \ref{f:3}) with adjacency matrix $A$ given by 

\[
A:=\begin{blockarray}{&ccccccc}
  &\{1\} & \{0\} & \alpha^+&\alpha^-&\beta^+&\beta^- \\
\begin{block}{c(ccccccc)}
 \{1\}& 0& 1&0&0&0&0\\
 \{0\}&0&1&0&0&0&0\\
 \alpha^+&1&0&0&1&1&1\\
  \alpha^-&0&0&1&0&0&0\\
 \beta^+&0&0&1&0&0&0\\
 \beta^-&0&0&1&0&0&0\\
\end{block}
\end{blockarray}\ .
 \]
  \begin{figure}[h]
     \includegraphics[width=0.45\textwidth]{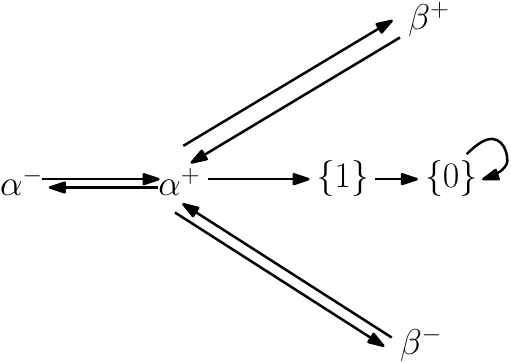}
	\caption{A finite graph for $\phi$ when $p\not=2$.}
    \label{f:3}
\end{figure}

Observe that the two-cycle of periodic points $(\alpha^+,\alpha^-,\alpha^+,\alpha^-,\dots)$ and  $(\alpha^-,\alpha^+,\alpha^-,\alpha^+,\dots)$ in the symbolic space $(\Sigma_A,\sigma)$ induced by the above symbols is the only periodic cycle not corresponding to a periodic cycle in $\mathcal{J}(\phi,K)$. Thus we conclude that 
$$\zeta_\phi(K,t):=\frac{1-t^2}{\det(I-tA)}=\frac{1-t^2}{(1-t)(1-3t^2)}=\frac{1+t}{1-3t^2}.$$
Hence
$\mathfrak{h}(\phi,K)=\log\sqrt3$. 

\textbf{Case 2: $p=2$.} For each $n\ge 1$, consider the sets $A_n:=D(0,|p^{4n-4}|)\setminus D(0,|p^{4n}|)$ and $\tilde A_n:=D(1,|p^{2n-2}|)\setminus D(0,|p^{2n}|)$.

In $A_n$, since $\phi(\overline D(0,|p^4|))=\overline{D}(0,1)=\overline{D}(1,1)$ and $\phi$ has degree $1$ on $\overline D(0,|p^4|)$, we conclude that
\begin{itemize}
\item  in $A_1$, there are unique closed disks $\overline D_1\subset\overline{U}_1$ such that $\phi(\overline D_1)=\overline D(1,|p^3|)$ and $\phi(\overline U_1)=\overline D(1,|p^2|)$; 
\item for each $n\ge 2$, in $A_n$,  there are unique closed disks $\overline D_n\subset\overline{U}_n$ such that $\phi(\overline D_n)=\overline D_{n-1}$ and $\phi(\overline U_n)=\overline U_{n-1}$;
\item for each $n\ge 1$, $\phi^{\circ (n+1)}(A_n\setminus\overline U_n)=K\setminus(D(0,1)\cup D(1,1))$ and hence $A_n\setminus U_n\subseteq\mathcal{F}(\phi,K)$. 
\end{itemize}
It follows that 
$$D(0,1)\cap\mathcal{J}(\phi,K)\subseteq\bigcup_{n\ge 1}\overline U_n\cup\{0\}.$$

In $\tilde A_n$, since $\phi(\overline{D}(1,|p^{2n}|))=\overline{D}(0,|p^{4n-4}|)$ and $\phi$ has degree $2$ on $\overline D(1,|p^{2n}|)$, we conclude that 
\begin{itemize}
\item for each $n\ge 2$, in $\tilde A_n$,  there are exactly two distinct closed disks $\overline B_n^{+}$ and $\overline B_n^{-}$ such that $\phi(\overline B_n^{\pm})=\overline D_{n-1}$;
\item  in $\tilde A_1$, there are exactly two distinct closed disks $\overline B_1^{+}$ and $\overline B_1^{-}$ such that $\phi(\overline B_1^{\pm})=\overline D(1,|p^3|)$;
\item  in $\tilde A_1$, there are exactly two distinct closed disks $\overline V^{+}$ and $\overline V^{-}$ such that $\overline B_1^{\pm}\subseteq\phi(\overline V_1^{\pm})$ and $\mathrm{diam}(\phi(\overline V^{\pm}))=p\cdot\mathrm{diam}(\overline B_1^{\pm})$;
\end{itemize}
It follows that 
$$\overline V^{\pm}\subsetneq\phi(\overline V^+)=\phi(\overline V^-).$$
Moreover, 
$$\tilde A_1\cap\mathcal{J}(\phi,K)\subseteq\overline B_1^+\cup \overline B_1^-\cup\overline V^+\cup \overline V^-.$$

In $\overline U_n$,  since $\phi(\overline U_1)=\overline D(1,|p^2|)$, we conclude that 
\begin{itemize}
\item  in $\overline U_1$, there is a unique closed disk $\overline W_1\not=\overline D_1$ such that $\phi(\overline W_1)=\phi(\overline V_1^{\pm})$; 
\item for each $n\ge 2$, in $\overline U_n$,  there is a unique closed disk $\overline W_n\not=\overline D_n$ such that $\phi(\overline W_n)=\overline W_{n-1}$;
\end{itemize}
Then in $\tilde A_n$, we conclude that 
\begin{itemize}
\item for each $n\ge 2$, in $\tilde A_n$,  there are exactly two distinct closed disks $\overline X_n^{+}$ and $\overline X_n^{-}$, distinct from $\overline{B}_n^{\pm}$ such that $\phi(\overline X_n^{\pm})=\overline W_{n-1}$. 
\end{itemize}
It is easy to check that 
$$\mathcal{J}(\phi,K)\subseteq\bigcup_{n\ge 1}(\overline D_n\cup\overline W_n\cup\overline B_n^+\cup \overline B_n^-\cup\overline X_n^+\cup \overline X_n^-)\cup\overline V^+\cup \overline V^-\cup\{1\}\cup\{0\}.$$
Moreover, we obtain countable symbols  $\overline{D}_n, \overline W_n$, $\overline B_n^\pm$,$\overline X_n^{\pm}$ and $\overline V^{\pm}$ (see Figure \ref{f:5}).
  \begin{figure}[h]
    \includegraphics[width=0.75\textwidth]{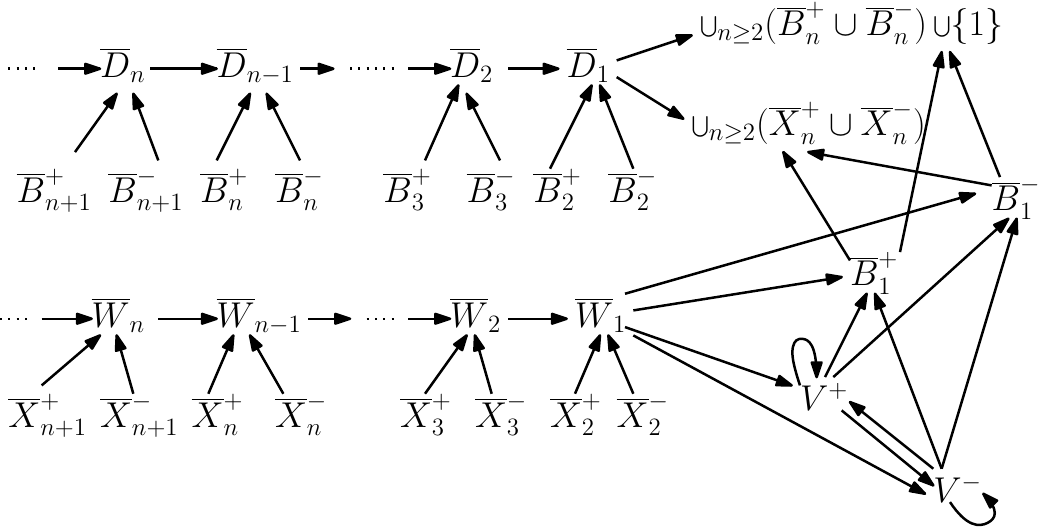}
	\caption{The portrait of $\overline{D}_n, \overline W_n$, $\overline B_n^\pm$,$\overline X_n^{\pm}$ and $\overline V^{\pm}$  under $\phi$.}
    \label{f:5}
\end{figure}

We now identify $\overline{D}_{n},n\ge1$ as a symbol $\alpha$, $\overline{W}_{n},n\ge1$ as a symbol $\omega$, $\overline{B}_n^+,n\ge2$ as a symbol $\beta^+$, $\overline{B}_n^-,n\ge2$ as a symbol $\beta^-$, $\overline{B}_1^+$ as a symbol $\beta_1^+$, $\overline{B}_1^-$ as a symbol $\beta_1^-$, $\overline{X}_n^+,n\ge2$ as a symbol $\theta^+$, $\overline{X}_n^-,n\ge2$ as a symbol $\theta^-$,  $\overline V^+$ as a symbol $v^+$, $\overline V^-$ as a symbol $v^-$, exchanging the disks $\overline{B}_n^-$ and $\overline{B}_n^+$ for some $n$ and exchanging the disks $\overline{X}_m^-$ and $\overline{X}_m^+$ for some $m$ if necessary. Then adding the critical point $\{1\}$ and the critical value $\{0\}$, we obtain a finite graph (see Figure \ref{f:6}) with adjacency matrix $A$ given by 

\[
A:=\begin{blockarray}{&cccccccccccc}
  &\{1\} & \{0\} & \alpha&\beta^+&\beta^-& \beta_1^+& \beta_1^-&\omega&\theta^+&\theta^-&v^+&v^- \\
\begin{block}{c(cccccccccccc)}
 \{1\}& 0& 1&0&0&0&0&0&0&0&0&0&0\\
 \{0\}&0&1&0&0&0&0&0&0&0&0&0&0\\
 \alpha&1&0&1&1&1&0&0&0&1&1&0&0\\
 \beta^+&0&0&1&0&0&0&0&0&0&0&0&0\\
 \beta^-&0&0&1&0&0&0&0&0&0&0&0&0\\
  \beta_1^+&1&0&0&1&1&0&0&0&1&1&0&0\\
 \beta_1^-&1&0&0&1&1&0&0&0&1&1&0&0\\
  \omega&0&0&0&0&0&1&1&1&0&0&1&1\\
   \theta^+&0&0&0&0&0&0&0&1&0&0&0&0\\
 \theta^-&0&0&0&0&0&0&0&1&0&0&0&0\\
  v^+&0&0&0&0&0&1&1&0&0&0&1&1\\
 v^-&0&0&0&0&0&1&1&0&0&0&1&1\\
\end{block}
\end{blockarray}\ .
 \]
   \begin{figure}[h]
     \includegraphics[width=0.7\textwidth]{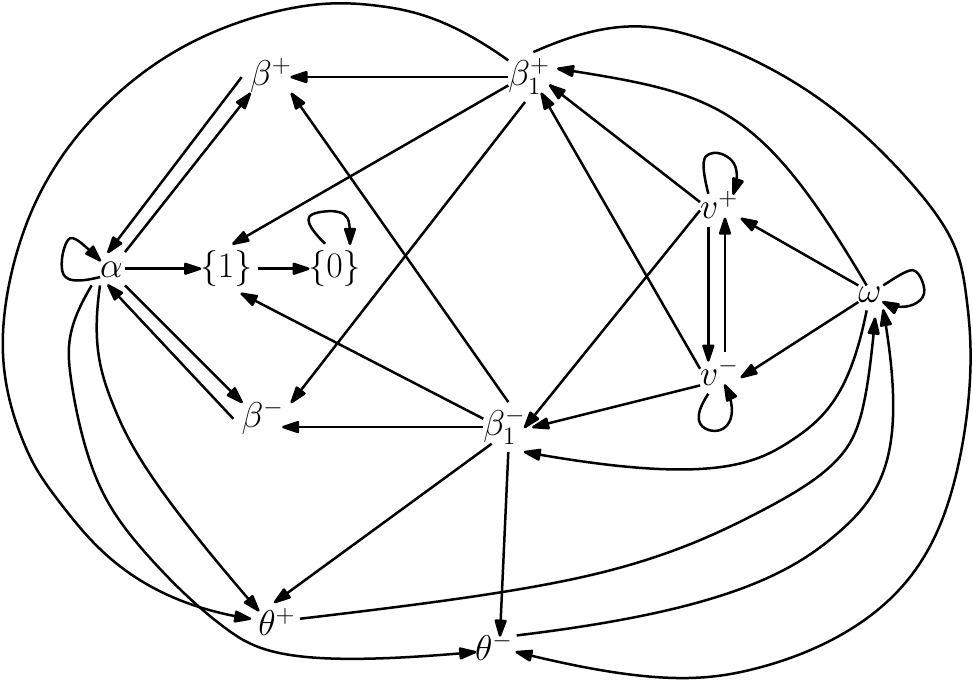}
	\caption{A finite graph for $\phi$ when $p=2$.}
    \label{f:6}
\end{figure}

 Observe that the two fixed  points $(\alpha,\alpha,\alpha,\dots)$ and  $(\omega,\omega,\omega,\dots)$ in the symbolic space $(\Sigma_A,\sigma)$ induced by the above symbols are the only periodic cycles not corresponding to periodic cycles in $\mathcal{J}(\phi,K)$. Thus we conclude that 
$$\zeta_\phi(K,t):=\frac{(1-t)^2}{\det(I-tA)}=\frac{(1-t)^2}{(1-t)^2(1-3t)}=\frac{1}{1-3t}.$$
Hence
$\mathfrak{h}(\phi,K)=\log 3$. 
\bibliographystyle{siam}
\bibliography{references1}	

\begin{thebibliography}{10}

\bibitem{Artin65}
{\sc M.~Artin and B.~Mazur}, {\em On periodic points}, Ann. of Math. (2), 81
  (1965), pp.~82--99.

\bibitem{Bajpa17}
{\sc D.~Bajpai, R.~Benedetto, R.~Chen, E.~Kim, O.~Marschall, D.~Onul, and
  Y.~Xiao}, {\em Non-archimedean connected {J}ulia sets with branching},
  Ergodic Theory Dynam. Systems, 37 (2017), pp.~59--78.

\bibitem{Baker10}
{\sc M.~Baker and R.~Rumely}, {\em Potential theory and dynamics on the
  {B}erkovich projective line}, vol.~159 of Mathematical Surveys and
  Monographs, American Mathematical Society, Providence, RI, 2010.

\bibitem{Benedetto00}
{\sc R.~Benedetto}, {\em {$p$}-adic dynamics and {S}ullivan's no wandering
  domains theorem}, Compositio Math., 122 (2000), pp.~281--298.

\bibitem{Benedetto01}
\leavevmode\vrule height 2pt depth -1.6pt width 23pt, {\em Hyperbolic maps in
  {$p$}-adic dynamics}, Ergodic Theory Dynam. Systems, 21 (2001), pp.~1--11.

\bibitem{Benedetto02}
\leavevmode\vrule height 2pt depth -1.6pt width 23pt, {\em Components and
  periodic points in non-{A}rchimedean dynamics}, Proc. London Math. Soc. (3),
  84 (2002), pp.~231--256.

\bibitem{Benedetto19}
\leavevmode\vrule height 2pt depth -1.6pt width 23pt, {\em Dynamics in one
  non-archimedean variable}, vol.~198 of Graduate Studies in Mathematics,
  American Mathematical Society, Providence, RI, 2019.

\bibitem{Bridy16}
{\sc A.~Bridy}, {\em The {A}rtin-{M}azur zeta function of a dynamically affine
  rational map in positive characteristic}, J. Th\'{e}or. Nombres Bordeaux, 28
  (2016), pp.~301--324.

\bibitem{Faber13}
{\sc X.~Faber}, {\em Topology and geometry of the {B}erkovich ramification
  locus for rational functions, {II}}, Math. Ann., 356 (2013), pp.~819--844.

\bibitem{Fan21}
{\sc S.~{Fan}, L.~{Liao}, H.~{Nie}, and Y.~{Wang}}, {\em {Julia Sets and
  Dynamics of $p$-adic Sub-hyperbolic Rational Maps}}, arXiv e-prints,  (2021),
  p.~arXiv:2111.01579.

\bibitem{Fan24}
{\sc S.~{Fan}, L.~{Liao}, H.~{Nie}, and Y.~{Wang}}, {\em {Expanding property
  and statistical laws for $p$-adic subhyperbolic rational maps}}, arXiv
  e-prints,  (2024), p.~arXiv:2401.06322.

\bibitem{Favre12}
{\sc C.~Favre, J.~Kiwi, and E.~Trucco}, {\em A non-{A}rchimedean {M}ontel's
  theorem}, Compos. Math., 148 (2012), pp.~966--990.

\bibitem{Favre10}
{\sc C.~Favre and J.~Rivera-Letelier}, {\em Th\'{e}orie ergodique des fractions
  rationnelles sur un corps ultram\'{e}trique}, Proc. Lond. Math. Soc. (3), 100
  (2010), pp.~116--154.

\bibitem{Favre22}
{\sc C.~{Favre}, T.~T. {Truong}, and J.~{Xie}}, {\em {Topological entropy of a
  rational map over a complete metrized field}}, arXiv e-prints,  (2022),
  p.~arXiv:2208.00668.

\bibitem{Gromov03}
{\sc M.~Gromov}, {\em On the entropy of holomorphic maps}, Enseign. Math. (2),
  49 (2003), pp.~217--235.

\bibitem{Hinkkanen94}
{\sc A.~Hinkkanen}, {\em Zeta functions of rational functions are rational},
  Ann. Acad. Sci. Fenn. Ser. A I Math., 19 (1994), pp.~3--10.

\bibitem{Hsia96}
{\sc L.-C. Hsia}, {\em A weak {N}\'{e}ron model with applications to {$p$}-adic
  dynamical systems}, Compositio Math., 100 (1996), pp.~277--304.

\bibitem{Hsia00}
\leavevmode\vrule height 2pt depth -1.6pt width 23pt, {\em Closure of periodic
  points over a non-{A}rchimedean field}, J. London Math. Soc. (2), 62 (2000),
  pp.~685--700.

\bibitem{Ingram13}
{\sc P.~Ingram}, {\em Arboreal {G}alois representations and uniformization of
  polynomial dynamics}, Bull. Lond. Math. Soc., 45 (2013), pp.~301--308.

\bibitem{Kiwi22}
{\sc J.~{Kiwi} and H.~{Nie}}, {\em {The basin of infinity of tame
  polynomials}}, arXiv e-prints,  (2022), p.~arXiv:2208.14900.

\bibitem{Lee15}
{\sc J.~Lee}, {\em {The Artin-Mazur zeta functions of certain non-Archimedean
  dynamical systems}}, arXiv e-prints,  (2015), p.~arXiv:1505.04249.

\bibitem{Lind84}
{\sc D.~Lind}, {\em The entropies of topological {M}arkov shifts and a related
  class of algebraic integers}, Ergodic Theory Dynam. Systems, 4 (1984),
  pp.~283--300.

\bibitem{Manning71}
{\sc A.~Manning}, {\em Axiom {${\rm A}$} diffeomorphisms have rational zeta
  functions}, Bull. London Math. Soc., 3 (1971), pp.~215--220.

\bibitem{Milnor88}
{\sc J.~Milnor and W.~Thurston}, {\em On iterated maps of the interval}, in
  Dynamical systems ({C}ollege {P}ark, {MD}, 1986--87), vol.~1342 of Lecture
  Notes in Math., Springer, Berlin, 1988, pp.~465--563.

\bibitem{Nopal23}
{\sc V.~Nopal~Coello and J.~R. Perez-Buendia}, {\em Gluing dynamics:
  $\varepsilon$-precision in solving a non-archimedean inverse problem}, Bol.
  Soc. Mat. Mex. (3), 31 (2025), pp.~Paper No. 52, 16.

\bibitem{Olivares24}
{\sc J.~Olivares-Vinales}, {\em {The Artin-Mazur zeta function for interval
  maps}}, arXiv e-prints,  (2024), p.~arXiv:2405.10560.

\bibitem{Pollicott02}
{\sc M.~Pollicott}, {\em Periodic orbits and zeta functions}, in Handbook of
  dynamical systems, {V}ol.\ 1{A}, North-Holland, Amsterdam, 2002,
  pp.~409--452.

\bibitem{Pollicott20}
\leavevmode\vrule height 2pt depth -1.6pt width 23pt, {\em Dynamical zeta
  functions and the distribution of orbits}, in Handbook of group actions. {V},
  vol.~48 of Adv. Lect. Math. (ALM), Int. Press, Somerville, MA, [2020]
  \copyright 2020, pp.~399--440.

\bibitem{Salerno20}
{\sc A.~Salerno and J.~H. Silverman}, {\em Integrality properties of
  {B}\"ottcher coordinates for one-dimensional superattracting germs}, Ergodic
  Theory Dynam. Systems, 40 (2020), pp.~248--271.

\bibitem{Silverman07}
{\sc J.~Silverman}, {\em The arithmetic of dynamical systems}, vol.~241 of
  Graduate Texts in Mathematics, Springer, New York, 2007.

\bibitem{Thurston22}
{\sc W.~Thurston}, {\em Entropy in dimension one}, in Collected works of
  {W}illiam {P}. {T}hurston with commentary. {V}ol. {III}. {D}ynamics, computer
  science and general interest, Amer. Math. Soc., Providence, RI, [2022]
  \copyright 2022, pp.~177--222.
\newblock Reprint of [3289916].

\bibitem{Trucco14}
{\sc E.~Trucco}, {\em Wandering {F}atou components and algebraic {J}ulia sets},
  Bull. Soc. Math. France, 142 (2014), pp.~411--464.

\end{thebibliography}

\end{document}